\documentclass{article}
\usepackage{a4,graphics}
\usepackage[T1]{fontenc}
\usepackage{lmodern}
\usepackage[centertags]{amsmath}
\usepackage{amsfonts}
\usepackage{amssymb}
\usepackage{amsthm}
\usepackage{mathabx}
\usepackage{latexsym}
\usepackage[ansinew]{inputenc}
\usepackage[dvips]{graphicx,color}
\usepackage{psfrag}

\widowpenalty=10000
\clubpenalty=10000
\raggedbottom
\theoremstyle{plain}
\newtheorem{theorem}{Theorem}[section]
\newtheorem{proposition}[theorem]{Proposition}
\newtheorem{lemma}[theorem]{Lemma}
\newtheorem{corollary}[theorem]{Corollary}

\newcommand{\charf}[1]{\mathbf{1}_{#1}}
\newcommand{\real}{\mathbb R}
\newcommand{\nat}{\mathbb N}
\newcommand{\ent}{ \mathbb Z}

\newcommand{\dif}{\textrm{d}}

\newcommand{\norm}[1]{\left\lvert #1\right\rvert}
\newcommand{\dnorm}[1] {\left\lVert #1 \right\rVert}
\newcommand{\tnorm}[2] {\left\vvvert #1 \right\vvvert_{#2}}

\newcommand{\vect}[1]{\mathbf{#1}}
\newcommand{\var}[1]{\mathrm{Var}_{\raisebox{-2pt}{${\scriptstyle #1}$}}}
\newcommand{\proj}[1] {\Pi_{#1}}

\newcommand{\hinv}[1]{h_{\mathrm{inv}}^{\scriptscriptstyle (\!#1\!)}}
\newcommand{\muinv}[1]{\mu_{\mathrm{inv}}^{\scriptscriptstyle (\!#1\!)}}
\newcommand{\sleb}[1]{m^{\scriptscriptstyle (\!#1\!)}}

\begin{document}
\title{Phase Transition and Correlation Decay\\ in Coupled Map Lattices }
\author{A. de Maere\footnote{Partially supported by the Belgian IAP program P6/02.}\\UCL, FYMA, Chemin du Cyclotron 2,\\B-1348 Louvain-la-Neuve, Belgium\\ augustin.demaere@uclouvain.be}

\date{}
\maketitle

\begin{abstract}
For a Coupled Map Lattice with a specific strong coupling emulating Stavskaya's probabilistic cellular automata, we prove the existence of a phase transition using a Peierls argument, and exponential convergence to the invariant measures for a wide class of initial states using a technique of decoupling originally developed for weak coupling. This implies the exponential decay, in space and in time, of the correlation functions of the invariant measures.
\end{abstract}

\section{Introduction}
\label{intro}

It is now well-known that infinite dimensional systems are radically different from their  finite dimensional counterparts, and perhaps the most striking difference is the phenomenon of phase transition. In general, finite dimensional systems tend to have only one natural measure, also called phase. For infinite dimensional systems, the picture is quite different: weakly coupled systems tend to have only one natural measure and strongly coupled systems may have several.

This picture also holds for Coupled Map Lattices (CML). CML are discrete time dynamical systems generated by the iterations of a map on a countable product of compact spaces. The map is the composition of a local dynamic with strong chaotic properties and a coupling which introduce some interaction between the sites of the lattice. CML were introduced by Kaneko \cite{Ka84,Ka93}, and they can be seen as an infinite dimensional generalization of interval maps. Their natural measures are the SRB measures and in this case, the definition of SRB measure is a measure invariant under the dynamic with finite dimensional marginals of bounded variation. The unicity of the SRB measure for weakly Coupled Map Lattices has been thoroughly studied in various publications \cite{BuSi88,PeSi91,BrKu95,BrKu96,BrKu97,MaVa97,BaDeIsJaKu98,JiPe98,Ke98,JaJa01,KeLi04,KeLi05,Ja05,KeLi06}.

Despite many numerical results on the existence of phase transition for strongly coupled map lattices (see for instance \cite{MiHu93,BoBuCoFrPe95,BoBuCoFrPe01,ScJu01}), there are still few analytical results on the subject. The first rigorous proof of the existence of a phase transition was performed by Gielis and MacKay \cite{GiMa00}, who constructed a bijection between some Coupled Map Lattices and Probabilistic Cellular Automata (PCA) and relied on the existence of a phase transition for the PCA to prove the existence of a phase transition for the CML. But their result requires the assumption that the coupling does not destroy the Markov partition of the single site dynamics, and this hypothesis is clearly not true for general Coupled Map Lattices. Other publications are following this approach by considering specific coupling that preserve the Markov partition \cite{Ju01,BlBu03}. Later, Bardet and Keller \cite{BaKe06} proved the existence of a phase transition for a more natural coupled map lattice emulating Toom's probabilistic cellular automata, using a standard Peierls argument.

The purpose of this article is to extend these results for a Coupled Map Lattice with a very general local dynamic and a coupling behaving like Stavskaya's PCA.

\section{Description of the Model and Main Results}\label{descr}

\subsection{General setup}

Let $I = [-1,1]$, and $X = I^{\ent}$. The Coupled Map Lattice is given by a map ${T : X \to X}$, where $T = \Phi_{\epsilon} \circ \tau^{\ent}$ with $\tau : I \to I$ the local dynamic and $\Phi_{\epsilon}: X \to X$ the coupling. The evolution of initial signed Borel measures under the dynamic is given by the transfer operator $T$, also called the Perron-Frobenius operator, which is defined by:
\begin{equation*}
T \mu (\varphi) = \mu(\varphi \circ T). 
\end{equation*}
Let $m_{\ent}$ be the Lebesgue measure on $X$. Let $\mathcal{C}(X)$ be the set of continuous real-valued functions on $X$, and $\norm{\, \cdot\, }_{\infty}$ be the sup norm on this space.

For every finite $\Lambda \subset \ent$, let $\norm{\Lambda}$ be the cardinality of $\Lambda$, $\pi_{\Lambda}: X \mapsto I^{\Lambda}$ be the canonical projector from $X$ to $I^{\Lambda}$, $m_\Lambda$ the Lebesgue measure on $I^{\Lambda}$, and $\pi_{\Lambda}\mu$ the restriction of $\mu$ to $I^{\Lambda}$. Then, for every signed Borel measure $\mu$, the total variation norm is defined by:
\begin{equation}\label{eq1}
\norm{\mu} = \sup\big\{\,\mu(\varphi)\,\big|\,\varphi \in \mathcal{C}(X) \,\textrm{and}\, \norm{\varphi}_{\infty} \leq 1\,\big\}.
\end{equation}
Consider $L^1(X)$, the space of signed Borel measures such that $\norm{\mu} < \infty$ and $\pi_{\Lambda}\mu$ is absolutely continuous with respect to $m_{\Lambda}$ for every finite $\Lambda \subset \ent$. We immediately see that if the map $T$ is piecewise continuous, Proposition \ref{propA2} from the Appendix implies that:
\begin{equation}\label{eq1.1}
\norm{T \mu} \leq \norm{\mu}.
\end{equation}
Note that if $\mu$ is a probability measure, its total variation norm is always equal to $1$.

It is well-known that the total variation norm is not sufficient to study the spectral properties of Coupled Map Lattices \cite{JaJa01}, and that the bounded variation norm also plays an important role. Let $\dnorm{\,\cdot\,}$ be the bounded variation norm, defined by:
\begin{equation}\label{eq2}
\dnorm{\mu} = \sup\big\{\,\mu(\partial_p\varphi)\,\big|\,p \in \ent\,,\,\varphi \in \mathcal{C}^{1}(X) \,\textrm{and}\, \norm{\varphi}_{\infty} \leq 1\,\big\}.
\end{equation}
It can be seen that the space ${\mathcal{B}(X) = \big\{\mu \in L^{1}(X)\; \big|\; \dnorm{\mu} < \infty \big\}}$, endowed with the norm $\dnorm{\,\cdot\,}$ is a Banach space. If we use the fact that for any continuous function $\varphi$, we have $\varphi(\vect{x}) = \partial_p \int_{0}^{x_p} \varphi(\xi_p,\vect{x}_{\neq p})\,\dif \xi_p$, we can also prove that:
\begin{equation}\label{eq2.1}
\norm{\mu} \leq \dnorm{\mu}.
\end{equation}

Following an original idea of Vitali \cite{Vi08}, we also consider: 
\begin{equation}\label{eq7.0}
\var{\Lambda}\mu = \sup \Big\{\,\mu\big( \partial_{\Lambda}\varphi\;\big) \ \Big|\ \varphi \in \mathcal{C}^1_{\Lambda}(X)\; \mathrm{and}\; \norm{\varphi}_{\infty} \leq 1\, \Big\}
\end{equation}
for any finite $\Lambda \subset \ent$, where $\partial_{\Lambda}$ denotes the derivative with respect to all the coordinates in $\Lambda$ and $\mathcal{C}^1_{\Lambda}(X)$ is the set of continuous functions $\varphi$ such that $\partial_\Lambda \varphi$ is also continuous. We already note that:
\begin{equation*}
\begin{cases}\var{\varnothing}\mu = \norm{\mu} \\
 \sup_{p \in \ent} \var{\{p\}}\mu = \dnorm{\mu}
\end{cases}
\end{equation*}
In general, we do not expect the variation $\var{\Lambda}\mu$ to be bounded uniformly in $\Lambda$. In fact, even for a totally decoupled measure of bounded variation $\mu$, it is straightforward to check that $\var{\Lambda}\mu$ will grow exponentially with $\norm{\Lambda}$. Consequently, it is natural to consider the following $\theta$-norm, for some $\theta > 1$:
\begin{equation}\label{eq9}
\tnorm{\mu}{\theta} = \sup_{\Lambda \subset \ent}\  \theta^{- \norm{\Lambda}}\,  \var{\Lambda}\mu .
\end{equation}

For any $K >0$, $\alpha >0$ and $\theta \geq 1$, let $\mathcal{B}(K,\alpha,\theta)$ be the set of measures in $\mathcal{B}(X)$ such that for all finite $\Lambda \subset \ent$, we have,
\begin{equation}\label{eq21}
\tnorm{\charf{(0,1]^{\Lambda}}\mu}{\theta} \leq K {\alpha}^{\norm{\Lambda}},
\end{equation}
where $(0,1]^\Lambda \subseteq X$ is the set of configurations $\vect{x}$ such that $x_p \in (0,1]$ for every $p \in \Lambda$ and for any $A \subseteq X$, $\charf{A}$ is used as an operator acting on measures through:
\begin{equation*}
\charf{A}\mu(\varphi) = \mu \left(\charf{A}\varphi\right).
\end{equation*}
 
Let us just give an example of some measure in $\mathcal{B}(K,\alpha,\theta)$. If $\norm{\cdot}_{L^1(I)}$ and $\dnorm{\cdot}_{BV}$ are respectively the total variation norm and bounded variation norm on functions, if $h^{\scriptscriptstyle (\!-\!)}$ and $h^{\scriptscriptstyle (\!+\!)}$ are two probability densities of bounded variation on $[-1,0]$ and $(0,1]$ respectively and if $\mu = \prod_{p \in \ent} h(x_p)\,\dif x_p$ with  $h = \alpha h^{\scriptscriptstyle (\!+\!)} + (1-\alpha) h^{\scriptscriptstyle (\!-\!)}$  for some $\alpha \in [0,1]$, we can check that:
\begin{equation*}
\var{\Omega} \charf{(0,1]^{\Lambda}}\mu \leq \alpha^{\norm{\Lambda}}\,\dnorm{h^{\scriptscriptstyle (\!+\!)} }_{BV}^{\norm{\Omega \cap \Lambda}} \,\dnorm{h}_{BV}^{\norm{\Omega\setminus \Lambda}}.
\end{equation*}
Hence, as long as $\theta > \max \{\dnorm{h^{\scriptscriptstyle (\!+\!)} }_{BV},\dnorm{h}_{BV}\}$, we have $\tnorm{\charf{(0,1]^{\Lambda}}\mu}{\theta} \leq \alpha^{\norm{\Lambda}}$ and so $\mu$ belongs to $\mathcal{B}(1,\alpha,\theta)$.

\subsection{Assumptions on the dynamic}

We will assume the following properties of the dynamic. The coupling ${\Phi_{\epsilon}: X \mapsto X}$ depends on some parameter $\epsilon \in [0,1]$ and is explicitly given by:
\begin{equation*}
\Phi_{\epsilon}(\vect{x})_p = \left\{\begin{array}{ll}
x_p & \mathrm{if}\; x_p > 0\; \mathrm{and}\; x_{p+1} > 0  \\
x_p - 1 + \epsilon \rule{10pt}{0pt}& \mathrm{if}\; x_p > 0 \;\mathrm{and}\; x_{p+1} \leq 0   \\
x_p + \epsilon & \mathrm{if}\; x_p \leq 0  \\
\end{array}\right.
\end{equation*}
The coupling $\Phi_{\epsilon}$ has a behavior similar to Stavskaya's probabilistic cellular automata (see \cite{St73,DoKrTo90} for more details on Stavskaya's PCA). Indeed, if both $x_p$ and $x_{p+1}$ are strictly positive, $x_p$ will be sent to the interval $(0,1]$, and if $x_p$ or $x_{p+1}$ are negative, $x_p$ will be sent on $[-1,0]$, except if $x_p$ is in the small subset $(-\epsilon,0]\cup(1-\epsilon,1]$. If $\epsilon$ is close to $0$, the system is strongly coupled, and if $\epsilon$ is close to $1$, the system is weakly coupled.

On the other hand, we will assume that the single site dynamics $\tau$ is a piecewise expanding map $\tau: I \mapsto I$ such that:
\begin{itemize}
\item  $\exists \, \zeta_1, \ldots \zeta_{N+1} \in [-1,1]$, where $-1 = \zeta_1 < \ldots < \zeta_{N+1} = 1$ and $J_i =  (\zeta_i, \zeta_{i+1})$, such that the restriction of $\tau$ to the interval $J_i$ is monotone and uniformly $\mathcal{C}^{2}(I)$.
\item $\kappa = \inf \norm{\tau'} > 2$ and there is some $D_0 > 0$ such that $\frac{2}{\kappa \min \norm{J_i}} +  \lvert\frac{\tau''}{\left(\tau'\right)^2}\rvert_{\infty} \leq D_0$.
\item The map $\tau$ has two non-trivial invariant subsets $[0,1]$ and $[-1,0]$, and the dynamic restricted to these subsets is mixing. For the sake of simplicity, we will assume the map on $[-1,0]$ to be the translation of the map on $(0,1]$: for every $x \in (0,1]$, $\tau(x-1) = \tau(x) -1$.
\end{itemize}

If $P_{\tau}$ is the Perron-Frobenius operator associated to $\tau$ and $\lambda_0 = \frac{2}{\kappa}$, these assumptions imply the Lasota-Yorke inequality \cite{LaYo73}:
\begin{equation}
\dnorm{ P_{\tau}^{m} h}_{BV}  \leq \lambda_0^m \dnorm{h}_{BV} + \tfrac{D_0}{1-\lambda_0} \norm{h}_{L^1(I)},\label{eq0.1}
\end{equation}
and this inequality puts strong constraints on the spectrum of $P_{\tau}$ as an operator acting on functions of bounded variation. Indeed, the Ionescu Tulcea-Marinescu theorem \cite{IoMa50,He93} shows us that the spectrum of $P_{\tau}$ in the space of functions of bounded variation consists of the doubly degenerate eigenvalue $1$ with the rest of the spectrum contained inside a circle of radius $\lambda_0$.

Since $[-1,0]$ and $(0,1]$ are invariant subsets, we know that we can choose the two invariant densities associated to the eigenvalue $1$ to be respectively concentrated on $[-1,0]$ and $(0,1]$. Let $\hinv{-}$ and $\hinv{+}$ be these two eigenvectors. The Gelfand formula implies then that we can always choose $\varsigma \in (0,1)$ with $\varsigma > \lambda_0$ and $c > 0$ such that, for any function $h$ on $[-1,0]$ of bounded variation and any $m \in \nat$, we have:
\begin{equation}
\norm{P_{\tau}^{m} h  - \left( \int h(x) \  \mathrm d x\right)\;\hinv{-}}_{L^1(I)}  \leq c\  {\varsigma}^m  \dnorm{h}_{BV}.\label{eq0.2}
\end{equation}
A similar result also holds for $\hinv{-}$ and any function $h$ on $(0,1]$ of bounded variation.

Let $\mathcal{T}(D_0,c,\varsigma)$ be the set of maps $\tau$ satisfying the above assumptions for given values of $D_0 > 0$, $c > 0$ and $\varsigma \in (0,1)$ and arbitrary values of $\lambda_0 \in (0,\varsigma)$. We can see for instance that the Bernoulli shift or the maps introduced in \cite{Li95,Ke99} extended on the interval $[-1,1]$ using the symmetry assumptions all belong to one of the $\mathcal{T}(D_0,c,\varsigma)$. It can be seen that, if some map $\tau$ belongs to $\mathcal{T}(D_0,c,\varsigma)$, then $\tau^2 = \tau \circ \tau$ also belongs to $\mathcal{T}(D_0,c,\varsigma)$. And since $\inf \norm{(\tau^2)'} \geq (\inf \norm{\tau'})^2$, this implies that if $\mathcal{T}(D_0,c,\varsigma)$ is not empty, it contains maps with arbitrary large values of $\kappa$. This point will become important later.

Let $E_{\epsilon} = \tau^{-1}(-\epsilon,0]\cup\tau^{-1}(1-\epsilon,1]$. The assumptions on $\tau$ imply that the Lebesgue measure of $E_{\epsilon}$ is of order at most $\epsilon$. Indeed, since $\norm{\tau'}$ is bounded from below and since $\tau$ preserves the intervals $[0,1]$ and $[-1,0]$, we know that the preimage of ${(-\epsilon,0]\cup(1-\epsilon,1]}$ under $\tau$ consists of intervals of length at most $\frac{\epsilon}{\kappa}$, and there are at most $N$ such intervals. One could be worried about the fact that $N$ seems to be unbounded in the assumptions on $\tau$, but this is not the case, because $N \leq \frac{2}{\min_{i} \norm{J_i}}$ and so, $N \leq \kappa D_0$. Therefore, we have:
\begin{equation}\label{eq7.1}
\norm{E_{\epsilon}} \leq \frac{N}{\kappa}\,\epsilon \leq D_0 \, \epsilon.  
\end{equation}
For the commodity, we also introduce  the following constants:
\begin{align}
\lambda_1 & = \frac{4}{\kappa} + \frac{D_0 \norm{E_{\epsilon}}}{2} & D_1   &= \frac{4}{\kappa \min_i \norm{J_i}}\label{eq3.2}
\end{align}

\subsection{Main results}

We immediately see that for any value of $\epsilon$, the measure $\muinv{+}$ defined as:
\begin{equation*}
\muinv{+} = \prod_{p \in \ent} \hinv{+}(x_p)\,\dif x_p 
\end{equation*}
is always invariant under $T$. If $\epsilon$ is close to $1$, we can consider the system as a small perturbation of the case $\epsilon = 1$, and use a simple modification of the decoupling technique introduced by Keller and Liverani \cite{KeLi06} to prove that $\muinv{+}$ is indeed the unique SRB measure. Since $\muinv{+}$ is totally decoupled, it trivially has the property of exponential decay of correlation in space. Furthermore, as a direct consequence of the decay of correlation in time for the single-site dynamics (which comes from (\ref{eq0.2}), see \cite{Ba00} for more details.), we also have the decay of correlations in time for $\muinv{+}$.

We will prove in Section \ref{peierls} that if we decrease the strength of the coupling, other SRB measures may appear, and the system therefore undergoes a phase transition. For this, let us first define $\alpha_0$ as:
\begin{equation}\label{eq3.3}
\alpha_0 =  \frac{1}{2} \ \max\left\{\lambda_1 + \sqrt{\lambda_1^2 + 2 D_1 \norm{E_{\epsilon}}}, \,\frac{D_0 \norm{E_{\epsilon}}}{1 - \lambda_0}\,\right\} .
\end{equation}
Since $\norm{E_{\epsilon}} \leq D_0\, \epsilon$, we have:
\begin{equation}\label{eq21.-1}
\lim_{\epsilon \to 0} \,\alpha_0 = \lim_{\epsilon \to 0} \lambda_1 = \lim_{\epsilon \to 0} \left(\frac{4}{\kappa} + \frac{D_0 \norm{E_{\epsilon}}}{2}\right) = \frac{4}{\kappa}.
\end{equation}
Then, the existence of a phase transition is a consequence of this Theorem:

\begin{theorem}[Existence of a phase transition]\label{result1}Assume that $\tau$ belongs to $\mathcal{T}(D_0,c,\varsigma)$ and that $\kappa > 108$. Then, there is some $\epsilon_0 > 0$ such that, if $\epsilon \in [0, \epsilon_0)$:
\begin{itemize}
\item[\textbullet] the dynamic $T$ admits another SRB measure $\muinv{-} \neq \muinv{+}$
\item[\textbullet] \rule{0pt}{14pt}$\muinv{-}$ belongs to $\mathcal{B}(K_0, 3 \alpha_0,\theta_0)$ with $\alpha_0 < \frac{1}{27}$ defined in (\ref{eq3.3}), and $K_0$ and  $\theta_0$ given by:
\begin{equation}\label{eq3.3bis}\theta_0  = \frac{2 \alpha_0}{\norm{E_{\epsilon}}}  \qquad  \qquad K_0 =  \frac{1}{2 (1 - 27 \alpha_0)(1 - 3 \alpha_0)}
\end{equation}
\end{itemize}
 
\end{theorem}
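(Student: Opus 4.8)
The plan is to construct $\muinv{-}$ as an invariant limit of the orbit of a purely negative initial measure, controlling that orbit by a Peierls-type contour bound expressed in the weighted norm $\tnorm{\cdot}{\theta}$. I would start from $\mu_0 = \prod_{p\in\ent}\hinv{-}(x_p)\,\dif x_p$, which by the worked example after (\ref{eq21}) lies in $\mathcal{B}(1,3\alpha_0,\theta_0)$ for small $\epsilon$; indeed its positive content vanishes, so $\charf{(0,1]^{\Lambda}}\mu_0 = 0$ for every nonempty $\Lambda$. The core of the work is to show that the iterates $T^n\mu_0$ continue to satisfy (\ref{eq21}) with a constant $K_n$ obeying an affine recursion whose fixed point is exactly the $K_0$ of (\ref{eq3.3bis}), which keeps $K_n$ uniformly bounded with $K_n \to K_0$. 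Since $X$ is compact, the probability measures $T^n\mu_0$ are bounded in total variation and hence weak-$*$ precompact, and the functionals $\var{\Lambda}$ are weak-$*$ lower semicontinuous, so a Cesàro average $\bar\mu_n = \tfrac1n\sum_{k=0}^{n-1}T^k\mu_0$ has a limit point $\muinv{-}$ that is $T$-invariant (because $\norm{T\bar\mu_n - \bar\mu_n} \le 2/n$) and still obeys (\ref{eq21}) with constant $K_0$, hence lies in $\mathcal{B}(K_0,3\alpha_0,\theta_0)$ and has bounded-variation marginals, i.e.\ is an SRB measure. Distinctness is immediate: for $\Lambda$ with $K_0(3\alpha_0)^{\norm{\Lambda}} < 1$, which exists since $3\alpha_0 < 1$, one has $\norm{\charf{(0,1]^{\Lambda}}\muinv{-}} \le \tnorm{\charf{(0,1]^{\Lambda}}\muinv{-}}{\theta_0} \le K_0(3\alpha_0)^{\norm{\Lambda}} < 1$, whereas $\muinv{+} = \prod_p\hinv{+}$ gives the event $(0,1]^{\Lambda}$ full mass $1$.

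The engine driving the contour bound is the Stavskaya-like dichotomy encoded in $\Phi_\epsilon$. Writing $\vect{y} = \tau^{\ent}(\vect{x})$ and using that $\tau$ preserves the sign of each coordinate, the three cases in the definition of $\Phi_\epsilon$ show that $T(\vect{x})_p \in (0,1]$ can occur only if either $x_p > 0$ and $x_{p+1} > 0$, or $x_p \in E_\epsilon$. Thus every site positive after one step is either \emph{inherited}, which forces both $x_p$ and its right neighbour $x_{p+1}$ to have been positive, or \emph{nucleated} inside the exceptional set $E_\epsilon$, whose measure is at most $D_0\,\epsilon$ by (\ref{eq7.1}). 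This is precisely the mechanism that makes large positive regions improbable and a Peierls estimate feasible.

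Converting this dichotomy into the recursive estimate is the heart of the argument and the step I expect to be hardest. For $\mu \in \mathcal{B}(K,\alpha,\theta)$ I would bound $\var{\Omega}\charf{(0,1]^{\Lambda}}T\mu$ by decomposing $\Lambda$ into its nucleated and inherited sites. Each nucleated site contributes a factor of order $\norm{E_\epsilon}$, exactly compensated by the choice $\theta_0 = 2\alpha_0/\norm{E_\epsilon}$ in (\ref{eq3.3bis}); on the inherited sites the variation is transported back onto $\mu$ through the sitewise action of $\tau^{\ent}$, where the Lasota--Yorke inequality (\ref{eq0.1}) and the exponential bound (\ref{eq0.2}) produce the contraction constants $\lambda_1$ and $D_1$ of (\ref{eq3.2}). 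Because each connected component of an inherited positive region requires one extra positive pre-site (from the ``$x_p$ and $x_{p+1}$ both positive'' constraint), the per-site rate is governed not by a linear bound but by the quadratic self-consistency condition $\alpha_0^2 \ge \lambda_1\alpha_0 + \tfrac12 D_1\norm{E_\epsilon}$ built into the first branch of (\ref{eq3.3}), while the second branch absorbs the pure nucleation contribution coming from (\ref{eq0.1}). These two inequalities are engineered so that the recursion for $K_n$ closes with contour parameter $3\alpha_0$ and fixed point $K_0$; the genuinely delicate point is keeping the derivative (bounded-variation) data and the indicator (contour) data under simultaneous control inside the single norm $\tnorm{\cdot}{\theta}$ while summing over all admissible contour shapes.

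Finally, the numerical threshold enters through that summation over contours. The number of contour configurations of a given total size grows geometrically, and tracking the constants shows the relevant series converges precisely when $27\alpha_0 < 1$, which is also what renders the two geometric factors in $K_0 = \tfrac{1}{2(1-27\alpha_0)(1-3\alpha_0)}$ finite and positive. By (\ref{eq21.-1}) we have $\alpha_0 \to 4/\kappa$ as $\epsilon \to 0$, so the hypothesis $\kappa > 108$ gives $4/\kappa < \tfrac1{27}$ and hence $\alpha_0 < \tfrac1{27}$ for all sufficiently small $\epsilon$; this fixes $\epsilon_0$ and completes the argument. By contrast, the concluding compactness, invariance and distinctness steps are soft once the uniform contour bound is in hand.
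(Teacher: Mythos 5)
Your outline has the same skeleton as the paper (purely negative initial measure, a Peierls/contour bound expressed in $\tnorm{\cdot}{\theta_0}$, Krylov--Bogolyubov compactness, distinctness because $\muinv{+}$ gives $(0,1]^{\Lambda}$ full mass), and your peripheral steps are fine: the initial product measure does lie in the class, the threshold $\kappa>108\Rightarrow\lim_{\epsilon\to0}\alpha_0=4/\kappa<1/27$ is the right mechanism for fixing $\epsilon_0$, and your reformulation of the first branch of (\ref{eq3.3}) as $\alpha_0^2\ge\lambda_1\alpha_0+\tfrac12 D_1\norm{E_\epsilon}$ is correct. But the engine you propose --- showing that $T^n\mu_0$ satisfies (\ref{eq21}) with rate $3\alpha_0$ and a constant $K_n$ obeying a time-step affine recursion with fixed point $K_0$ --- does not close, and this is precisely the step the paper is forced to do differently. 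The one-step dichotomy (inherited versus nucleated) gives, via Lemma \ref{prop6} and (\ref{eq8}) (i.e.\ the $n=1$ case of (\ref{eq25.-1})),
\begin{equation*}
\tnorm{\charf{(0,1]^{\Lambda}}\,T\nu}{\theta_0}\ \leq\ \sum_{S\subseteq\Lambda}\ \alpha_0^{\norm{\Lambda\setminus S}}\ \tnorm{\charf{(0,1]^{S\cup(S+1)}}\,\nu}{\theta_0},
\end{equation*}
and if the only information retained about $\nu=T^n\mu_0$ is the class bound $\tnorm{\charf{(0,1]^{\Omega}}\nu}{\theta_0}\le K_n(3\alpha_0)^{\norm{\Omega}}$, then, using $\norm{S\cup(S+1)}=\norm{S}+c(S)$ with $c(S)$ the number of maximal blocks of consecutive sites of $S$, the requirement on $K_{n+1}$ becomes
\begin{equation*}
K_{n+1}\ \geq\ K_n\ \sup_{\Lambda}\ \sum_{S\subseteq\Lambda} 3^{-\norm{\Lambda\setminus S}}\,(3\alpha_0)^{c(S)}.
\end{equation*}
For $\Lambda$ an interval of length $L$ this sum is a transfer-matrix quantity growing like $\lambda_+^{L}$ with $\lambda_+=\tfrac23+\tfrac13\sqrt{1+9\alpha_0}>1$ whenever $\alpha_0>0$, so the supremum is infinite and no finite $K_{n+1}$ exists: the class $\mathcal{B}(K,3\alpha_0,\theta_0)$ is simply not stable under one step of $T$ with a uniform constant. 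The paper's own Theorem \ref{prop7} says the same thing from the other side: applied to a measure with $\alpha=3\alpha_0$ it only returns the degraded rate $\alpha'=3\max\{\alpha_0,3\alpha_0\}=9\alpha_0$, so iterating any one-step class bound compounds the rate to $3^n\alpha_0$ and loses everything.

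The cure, and the actual content of the paper's proof, is that the Peierls estimate is performed \emph{globally in space-time}: one expands $\charf{(0,1]^{\Lambda}}T^n\sleb{-}$ over clusters $\Gamma\subseteq\ent\times\{0,\dots,n\}$ in a single stroke ((\ref{eq6}), (\ref{eq22})), accumulates the weight $\alpha_0^{\norm{\partial\Gamma}}$ along the whole time axis via (\ref{eq24}), and pays the combinatorial entropy only once, against the outer-path inequalities (\ref{eq7}) which force $\norm{\partial\Gamma}\ge n_d+\norm{\Lambda}+k$; the resulting series (\ref{eq26.1}) converges, uniformly in $n$, exactly when $27\alpha_0<1$. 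In particular $K_0=\tfrac{1}{2(1-27\alpha_0)(1-3\alpha_0)}$ is not the fixed point of any recursion: it is the once-summed geometric series over $n_d$, $k$ and the number of connected components, applied to the initial constant $K=1$ of $\sleb{-}\in\mathcal{B}(1,0,\theta_0)$ (here starting from $\alpha=0$ matters, since it kills the time-zero boundary factor). Note also that even the recursive-looking argument later in the paper ((\ref{eq61})--(\ref{eq61.1})) does not contradict this: there the sum over clusters at each step is carried along unevaluated and the exponents are telescoped into a single global cluster before any entropy is counted, which is exactly the bookkeeping your single-constant recursion discards. So your proposal needs its core replaced by the global space-time expansion; the surrounding soft steps can stay as you wrote them.
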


The strategy used in the proof of this result is similar to the one used by Bardet and Keller in \cite{BaKe06} in the sense that is also use a Peierls argument, but the contour estimates are done in a different way, giving us a stronger result which allows us to prove in Section \ref{expconv} that a wide class of initial measures converges exponentially fast towards $\muinv{-}$.
\begin{theorem}[Exponential convergence to equilibrium]\label{result2}
Assume that $\tau$ belongs to $\mathcal{T}(D_0,c,\varsigma)$ and that $\kappa$ is larger than some $\kappa_1$ that depends on $D_0$, $c$ and $\varsigma$. Then, there is some $\epsilon_1 \in (0,1)$ such that, if $\epsilon \in [0,\epsilon_1]$, there is some $\sigma < 1$ such that for any $K >0$ there is some constant $C >0$ such that:
\begin{equation*}
\norm{\,\mu(\,\varphi \circ T^t\,) - \muinv{-}(\,\varphi\,)\,} \leq C\, \norm{\Lambda}\ \sigma^t\,\norm{\varphi}_{\infty}
\end{equation*}
for any probability measure $\mu$ in $\mathcal{B}(K,3 \alpha_0,\theta_0)$ and for any continuous function $\varphi$ depending only on the variables in $\Lambda \subset \ent$.
\end{theorem}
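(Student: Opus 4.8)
The plan is to use the invariance of $\muinv{-}$ to recast the statement as a decay estimate for a mean--zero measure, and then to combine the contour estimates furnished by Theorem \ref{result1} with the single--site exponential mixing (\ref{eq0.2}) through a decoupling argument performed around the minus phase, in the spirit of the technique of Keller and Liverani.

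First I would use $T^t\muinv{-}=\muinv{-}$ and the definition of the transfer operator to write
\[
\mu(\varphi\circ T^t)-\muinv{-}(\varphi)=\big(T^t(\mu-\muinv{-})\big)(\varphi).
\]
Set $\nu=\mu-\muinv{-}$. Since $\mu$ and $\muinv{-}$ are probability measures, $\nu$ has total mass zero; and because the seminorms $\var{\Lambda}$, hence $\tnorm{\,\cdot\,}{\theta_0}$, are subadditive and $\charf{(0,1]^{\Lambda}}$ acts linearly, the bound (\ref{eq21}) for $\mu$ and for $\muinv{-}$ gives $\nu\in\mathcal{B}(K+K_0,3\alpha_0,\theta_0)$. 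As $\varphi$ depends only on the coordinates in $\Lambda$, the quantity above equals $\big(\pi_{\Lambda}T^t\nu\big)(\varphi)$, so by (\ref{eq1}) it suffices to prove that the total variation of the $\Lambda$--marginal obeys $\norm{\pi_{\Lambda}T^t\nu}\le C\,\norm{\Lambda}\,\sigma^{t}$, after which the factor $\norm{\varphi}_{\infty}$ factors out.

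The core is a one--step contraction for $T$ on the mean--zero part of this class. I would separate the two effects of $\Phi_{\epsilon}$: on the minus sea a site with $x_p\le 0$ is merely translated by $\epsilon$, so that after composition with $\tau$ its marginal relaxes toward $\hinv{-}$ at the geometric rate $\varsigma$ of (\ref{eq0.2}); whereas a site can be created or kept positive only on the exceptional set $E_{\epsilon}$, of Lebesgue measure at most $D_0\epsilon$ by (\ref{eq7.1}), or inside a positive block, whose $\theta_0$--weighted mass is exponentially small, bounded by $(K+K_0)(3\alpha_0)^{\norm{\cdot}}$ with $3\alpha_0<\tfrac19$. The decoupling step then replaces the genuinely coupled evolution restricted to $\Lambda$ by a product of single--site minus--phase evolutions, the error being controlled by $\tnorm{\,\cdot\,}{\theta_0}$ together with the smallness of $\norm{E_{\epsilon}}$ and the rarity of positive blocks. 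This should furnish a strict contraction
\[
\mathcal{N}(T\nu)\ \le\ \sigma\,\mathcal{N}(\nu),
\]
on the mean--zero subspace, for a suitable quantity $\mathcal{N}$ dominating the $\Lambda$--marginal total variation and combining it with the weighted contour tail, the rate $\sigma<1$ being achieved once $\kappa$ is large (this fixes $\kappa_1$, through its dependence on $\varsigma$) and $\epsilon$ small (this fixes $\epsilon_1$). Iterating in $t$ produces the factor $\sigma^{t}$; the nearest--neighbor spread of $\Phi_{\epsilon}$ is dominated by this geometric decay, and summing the single--site contributions over the $\norm{\Lambda}$ sites of $\Lambda$ yields the linear prefactor $\norm{\Lambda}$.

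The main obstacle is the decoupling estimate itself, since for small $\epsilon$ the coupling is \emph{strong} and cannot be treated as a small perturbation. The mechanism that saves the argument is that $\Phi_{\epsilon}$ differs from the purely decoupled minus--phase dynamics only on $E_{\epsilon}$ and on positive blocks, both of which are exponentially suppressed in the $\theta_0$--weighted norm by the contour estimates of Theorem \ref{result1} --- this is exactly the stronger information those estimates provide over a bare Peierls bound. Turning this into a genuine contraction --- propagating the bounded--variation gap (\ref{eq0.2}) through the coupling while keeping uniform control of the spatial variations $\var{\Lambda}$, and obtaining a rate $\sigma<1$ rather than a merely bounded iteration --- is where the large--$\kappa$ hypothesis and the precise constants $\alpha_0$, $\theta_0$, $K_0$ of (\ref{eq3.3})--(\ref{eq3.3bis}) must all be used together. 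A secondary point is the uniformity of $C$ over the class for fixed $K$, which follows because $\mu$ enters the estimates only through the parameter $K$ in (\ref{eq21}).
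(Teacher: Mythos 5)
Your opening reduction is exactly the paper's own proof of this theorem: setting $\nu=\mu-\muinv{-}$, noting that $\nu$ has zero mass, and using subadditivity of $\tnorm{\,\cdot\,}{\theta_0}$ together with Theorem \ref{result1} to place $\nu$ in $\mathcal{B}(K+K_0,3\alpha_0,\theta_0)$. But in the paper everything after that step is delegated to Theorem \ref{prop12}, whose proof occupies all of Section \ref{expconv}, and in your proposal that content is replaced by the assertion that some norm $\mathcal{N}$, dominating the $\Lambda$-marginal total variations, satisfies a one-step contraction $\mathcal{N}(T\nu)\le\sigma\,\mathcal{N}(\nu)$ on the mean-zero class. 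This is the genuine gap: no such $\mathcal{N}$ is constructed, and the paper neither proves nor uses a contraction of $T$ itself in any norm. What is actually proved is far more structured. The zero-mass property is exploited through the telescoping decomposition $\nu=\sum_p \proj{qp}\nu$ of (\ref{eq29}), with the ordering $\prec$ chosen to match the propagation of information over $\gamma$ steps; the evolution is then expanded over paths of composite operators $\tilde{T}^{qp}=T^{\gamma}\proj{qp}$, with indicators of the positive or negative phase inserted at the checkpoint times $k\gamma$. The contraction (Theorem \ref{prop11}, rate $\sigma_1$) holds only for $\tilde{T}^{qp}\charf{[-1,0]}(x_p)\charf{[-1,0]}(x_{p+1})$, i.e.\ for the $\gamma$-step operator composed with the projection and restricted to the minus phase; its proof requires the comparison dynamics $T_0^{(p)}$ of Propositions \ref{prop9} and \ref{prop10}, because even deep in the minus sea the coupled map is not a product map (each negative site is shifted by $\epsilon$ and its update is conditioned on its neighbour), so the single-site mixing (\ref{eq0.2}) cannot be invoked directly as you do. The pieces of the expansion containing many positive excursions are \emph{not} contracted at all --- the corresponding operators are only bounded by constants larger than $1$ --- and their smallness comes from the Peierls weight, via inequality (\ref{eq25.-1}), so that two norms ($\dnorm{\,\cdot\,}$ and $\tnorm{\,\cdot\,}{\theta_0}$) must be carried through the expansion simultaneously; the final rate $\sigma$ emerges from balancing the path entropy $(4\gamma)^{t/\gamma}$ of (\ref{eq58.1}) against $\sigma_1^{m/2}$ and $(18\sqrt{\alpha_0})^{m}$, not from iterating a norm inequality for $T$. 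Your sketch acknowledges that this is ``where everything must be used together,'' but acknowledging the difficulty is not resolving it.

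Two further concrete misattributions would make your plan fail as written. First, you credit the contour estimates to Theorem \ref{result1}; that theorem only asserts $\muinv{-}\in\mathcal{B}(K_0,3\alpha_0,\theta_0)$, a statement about one fixed (invariant) measure, whereas the argument needs positive blocks to remain exponentially rare along the whole trajectory of an arbitrary $\mu\in\mathcal{B}(K,3\alpha_0,\theta_0)$, resolved cluster by cluster --- this is Theorem \ref{prop7} and its intermediate inequality (\ref{eq25.-1}), which is not a consequence of the statement of Theorem \ref{result1} and would have to be proved. Second, the relaxation (\ref{eq0.2}) of a marginal toward $\hinv{-}$ yields, by itself, no bound on the total variation of a zero-mass measure; it becomes useful only because the projections $\proj{qp}$ produce differences of the form ``measure minus the same measure with one coordinate integrated against $\hinv{-}$,'' and it is this structure (absent from your proposal, which never decomposes $\nu$ over sites) that also produces the prefactor $\norm{\Lambda}$, through the path count $\norm{\mathcal{F}(\Lambda)}\le\norm{\Lambda}\,\gamma^{m}$.
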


Eventually, we will show in Section \ref{expdecay} that Theorem \ref{result2} implies the exponential decay of correlations for the invariant measure $\muinv{-}$ both in space and time, therefore showing that $\muinv{-}$ is not only a SRB measure, but an extremal one:
\begin{proposition}[Exponential decay of correlations in space]\label{result3}
Under the assumptions of Theorem \ref{result2}, there is some positive constant $C$ such that for any bounded continuous functions $\varphi$ and $\psi$ depending on finitely many variables, respectively in $\Lambda$ and $\Omega$, we have:
\begin{align*}
\norm{\,\muinv{-}( \varphi \, \psi ) - \muinv{-}(\varphi)\ \muinv{-}(\psi)\,} \leq C\ \norm{\Lambda \cup \Omega}\ \sigma^{d(\Lambda,\Omega)}\ \norm{\varphi}_{\infty}\, \norm{\psi}_{\infty}.
\end{align*}
where $d(\Lambda,\Omega) = \inf \{\,\norm{q -p}\;|\; q \in \Lambda, \, p \in \Omega\,\}$ is the distance between $\Lambda$ and $\Omega$.
\end{proposition}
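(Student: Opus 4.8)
The plan is to deduce the spatial decay directly from the temporal decay of Theorem \ref{result2}, using an auxiliary totally decoupled measure together with the finite speed of propagation of the coupling $\Phi_{\epsilon}$. The crucial point is that one must \emph{not} try to factorize $\muinv{-}(\varphi\psi)$ by manipulating $\muinv{-}$ itself: the equal-time correlation of $\muinv{-}$ is exactly the quantity to be estimated, so any attempt to decouple it through the invariance of $\muinv{-}$ is circular. Instead I would run the dynamics from a product measure, for which factorization over disjoint groups of sites is \emph{exact}, and then pay only an exponentially small price to transport the relevant averages back to $\muinv{-}$. Concretely, fix the product probability measure $\mu_0 = \prod_{p\in\ent} h(x_p)\,\dif x_p$ with $h = 3\alpha_0\,\hinv{+} + (1-3\alpha_0)\,\hinv{-}$. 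By the example computed after (\ref{eq21}), and since $3\alpha_0 < 1$ while $\theta_0 = 2\alpha_0/\norm{E_\epsilon}$ is large for $\epsilon$ small (as guaranteed under the hypotheses of Theorem \ref{result2}), this measure lies in $\mathcal{B}(1,3\alpha_0,\theta_0)$, so Theorem \ref{result2} applies to it and $T^t\mu_0$ converges to $\muinv{-}$ on every local observable at the uniform rate $\sigma^t$.

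Next I would record the finite speed of propagation. Since $(T\vect{x})_p = \Phi_{\epsilon}(\tau^{\ent}\vect{x})_p$ depends only on $x_p$ and $x_{p+1}$, an easy induction shows that for a function $\varphi$ depending only on the coordinates in $\Lambda$, the function $\varphi\circ T^t$ depends only on the coordinates in $\Lambda + \{0,\dots,t\}$. Consequently, whenever $t < d(\Lambda,\Omega)$ the supports of $\varphi\circ T^t$ and $\psi\circ T^t$ are disjoint, and because $\mu_0$ is a product measure this yields the exact factorization
\begin{equation*}
(T^t\mu_0)(\varphi\psi) = \mu_0\big((\varphi\circ T^t)(\psi\circ T^t)\big) = \mu_0(\varphi\circ T^t)\,\mu_0(\psi\circ T^t) = (T^t\mu_0)(\varphi)\,(T^t\mu_0)(\psi).
\end{equation*}

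Finally I would set $t^\ast = d(\Lambda,\Omega) - 1$ and apply Theorem \ref{result2} to $\mu_0$ three times, with the test functions $\varphi\psi$ (on $\Lambda\cup\Omega$), $\varphi$ (on $\Lambda$) and $\psi$ (on $\Omega$), replacing each $T^{t^\ast}\mu_0$-average by the corresponding $\muinv{-}$-average up to an error of order $\norm{\Lambda\cup\Omega}\,\sigma^{t^\ast}\norm{\varphi}_\infty\norm{\psi}_\infty$. Writing
\begin{align*}
\muinv{-}(\varphi\psi) - \muinv{-}(\varphi)\muinv{-}(\psi) &= \big[\muinv{-}(\varphi\psi) - (T^{t^\ast}\mu_0)(\varphi\psi)\big] \\
&\quad + (T^{t^\ast}\mu_0)(\varphi)\big[(T^{t^\ast}\mu_0)(\psi) - \muinv{-}(\psi)\big] \\
&\quad + \muinv{-}(\psi)\big[(T^{t^\ast}\mu_0)(\varphi) - \muinv{-}(\varphi)\big],
\end{align*}
where the factorization above has been inserted in the first bracket, and using $\lvert (T^{t^\ast}\mu_0)(\varphi)\rvert\le\norm{\varphi}_\infty$ and $\lvert\muinv{-}(\psi)\rvert\le\norm{\psi}_\infty$, each bracket is exponentially small. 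This gives $\norm{\muinv{-}(\varphi\psi) - \muinv{-}(\varphi)\muinv{-}(\psi)} \le C\,\norm{\Lambda\cup\Omega}\,\sigma^{t^\ast}\norm{\varphi}_\infty\norm{\psi}_\infty$, and since $\sigma^{t^\ast} = \sigma^{-1}\,\sigma^{d(\Lambda,\Omega)}$, absorbing $\sigma^{-1}$ into the constant yields the claim (the case $d(\Lambda,\Omega)=0$ being covered trivially by $\norm{\muinv{-}(\varphi\psi)-\muinv{-}(\varphi)\muinv{-}(\psi)}\le 2\,\norm{\varphi}_\infty\norm{\psi}_\infty$).

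The main obstacle is conceptual rather than computational: one has to realize that the circularity of a direct decoupling of $\muinv{-}$ is broken by evolving a \emph{decoupled} initial condition, so that finite speed of propagation converts the desired spatial factorization into an exact identity $(T^t\mu_0)(\varphi\psi)=(T^t\mu_0)(\varphi)\,(T^t\mu_0)(\psi)$ valid for all $t<d(\Lambda,\Omega)$. Once this is in place, the remaining ingredients — the membership $\mu_0\in\mathcal{B}(1,3\alpha_0,\theta_0)$ and the support tracking for $\varphi\circ T^t$ — are routine, and the three applications of Theorem \ref{result2} assemble immediately into the stated bound.
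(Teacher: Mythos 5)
Your proof is correct and follows essentially the same route as the paper: evolve a totally decoupled initial measure, use finite propagation speed to get exact factorization at time $t^\ast = d(\Lambda,\Omega)-1$, and conclude with three applications of Theorem \ref{result2} via the triangle inequality. The only difference is cosmetic: the paper takes the initial measure to be $\sleb{-}$, the Lebesgue measure on $[-1,0]^{\ent}$, which lies in $\mathcal{B}(1,0,2)\subseteq\mathcal{B}(1,3\alpha_0,\theta_0)$ trivially, whereas your mixture $h = 3\alpha_0\,\hinv{+} + (1-3\alpha_0)\,\hinv{-}$ requires the (valid, via (\ref{eq27}) and $\dnorm{\hinv{\pm}}_{BV}\leq D_0/(1-\lambda_0)$) but unnecessary extra check of membership in $\mathcal{B}(1,3\alpha_0,\theta_0)$.
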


\begin{proposition}[Exponential decay of correlations in time]\label{result4}
Under the assumptions of Theorem \ref{result2}, for any functions $\varphi$ and $\psi$ depending only on the variables in some finite $\Lambda \subset \ent$, with $\varphi \in \mathcal{C}(X)$ and $\psi \in \mathcal{C}_{\Lambda}^1 (X)$, there is some constant $C_{\varphi,\psi} > 0$ such that:
\begin{align*}
\norm{\,\muinv{-}( \varphi\circ T^t\, \psi ) - \muinv{-}(\varphi)\ \muinv{-}(\psi)\,} \leq C_{\varphi,\psi}\ \norm{\Lambda} \sigma^{t}.
\end{align*}
\end{proposition}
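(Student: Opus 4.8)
The plan is to deduce Proposition \ref{result4} from the convergence to equilibrium, Theorem \ref{result2}, by viewing the time correlation as the evolution of the signed measure $\psi\,\muinv{-}$, where $\psi\,\muinv{-}$ denotes the measure acting by $(\psi\,\muinv{-})(\phi)=\muinv{-}(\psi\phi)$. First I would write
\begin{equation*}
\muinv{-}(\varphi\circ T^t\,\psi) = (\psi\,\muinv{-})(\varphi\circ T^t),
\end{equation*}
so that the left-hand side is exactly the action on $\varphi$ of the $t$-fold evolution of $\psi\,\muinv{-}$. The only difficulty in applying Theorem \ref{result2} directly is that $\psi\,\muinv{-}$ is neither positive nor normalized, whereas that theorem applies to probability measures in $\mathcal{B}(K,3\alpha_0,\theta_0)$.

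To remedy this, set $M=\norm{\psi}_{\infty}+1$ so that $g:=\psi+M\geq 1$, and decompose $\psi\,\muinv{-}=a\,\mu_1-b\,\mu_2$ with
\begin{equation*}
\mu_1=\frac{g\,\muinv{-}}{\muinv{-}(g)},\qquad \mu_2=\muinv{-},\qquad a=\muinv{-}(g)>0,\qquad b=M.
\end{equation*}
Both $\mu_1,\mu_2$ are probability measures and $a-b=\muinv{-}(\psi)$, so that $(\psi\,\muinv{-})(\varphi\circ T^t)\to\muinv{-}(\psi)\,\muinv{-}(\varphi)$. By linearity and the triangle inequality,
\begin{align*}
\norm{(\psi\,\muinv{-})(\varphi\circ T^t) - \muinv{-}(\psi)\,\muinv{-}(\varphi)}
&\leq a\,\norm{\mu_1(\varphi\circ T^t) - \muinv{-}(\varphi)} \\
&\quad{}+ b\,\norm{\mu_2(\varphi\circ T^t) - \muinv{-}(\varphi)}.
\end{align*}
Since $\mu_2=\muinv{-}\in\mathcal{B}(K_0,3\alpha_0,\theta_0)$ by Theorem \ref{result1}, and $\mu_1\in\mathcal{B}(K,3\alpha_0,\theta_0)$ for a suitable $K$ by the lemma below, Theorem \ref{result2} bounds each term by $C\,\norm{\Lambda}\,\sigma^t\,\norm{\varphi}_{\infty}$, giving the claim with $C_{\varphi,\psi}=(a+b)\,C\,\norm{\varphi}_{\infty}$.

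The key step, which I expect to be the main obstacle, is to show that multiplication by the fixed function $g\in\mathcal{C}^1_\Lambda(X)$ preserves membership in the class, i.e.\ that $g\,\muinv{-}\in\mathcal{B}(K,3\alpha_0,\theta_0)$ for some finite $K$; this is precisely why $\psi$ is assumed to lie in $\mathcal{C}^1_\Lambda$ while $\varphi$ need only be continuous. Since multiplication operators commute, $\charf{(0,1]^{\Lambda'}}\,g\,\muinv{-}=g\,\nu$ with $\nu=\charf{(0,1]^{\Lambda'}}\muinv{-}$, so it suffices to estimate $\var{\Omega}(g\,\nu)$ in terms of $\var{\Omega'}\nu$ for $\Omega'\subseteq\Omega$. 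For any test function $\phi$ with $\norm{\phi}_{\infty}\leq 1$, the Leibniz rule for $\partial_\Omega=\prod_{p\in\Omega}\partial_p$ gives
\begin{equation*}
g\,\partial_\Omega\phi = \partial_\Omega(g\phi) - \sum_{\varnothing\neq S\subseteq\Omega\cap\Lambda}(\partial_S g)\,\partial_{\Omega\setminus S}\phi,
\end{equation*}
using that $\partial_S g=0$ unless $S\subseteq\Lambda$ because $g$ depends only on the coordinates in $\Lambda$. Pairing with $\nu$, the first term is bounded by $\norm{g}_{\infty}\var{\Omega}\nu$, while each remaining term is bounded recursively via $\nu\big((\partial_S g)\,\partial_{\Omega\setminus S}\phi\big)\leq\var{\Omega\setminus S}\big((\partial_S g)\,\nu\big)$; since $(\Omega\setminus S)\cap\Lambda\subsetneq\Omega\cap\Lambda$, the recursion terminates after at most $\norm{\Lambda}$ steps, producing only the mixed partials $\partial_U g$ with $U\subseteq\Lambda$. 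Collecting the terms and using $\theta_0\geq 1$ together with $\var{\Omega'}\nu\leq\theta_0^{\norm{\Omega}}\tnorm{\nu}{\theta_0}$ and $\tnorm{\nu}{\theta_0}\leq K_0(3\alpha_0)^{\norm{\Lambda'}}$, I obtain
\begin{equation*}
\var{\Omega}(g\,\nu)\leq C_\psi\,\theta_0^{\norm{\Omega}}\,K_0\,(3\alpha_0)^{\norm{\Lambda'}},
\end{equation*}
where $C_\psi$ depends only on $\norm{\Lambda}$ (through a combinatorial factor $2^{\norm{\Lambda}}$) and on $\max_{U\subseteq\Lambda}\norm{\partial_U g}_{\infty}$, all finite since $\psi\in\mathcal{C}^1_\Lambda$. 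Dividing by $\muinv{-}(g)=a$ yields $\mu_1\in\mathcal{B}(C_\psi K_0/a,\,3\alpha_0,\theta_0)$, as required.

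The remaining work is the bookkeeping in this recursion --- tracking the $2^{\norm{\Lambda}}$ terms and the derivative bounds at each level --- together with the routine verification that $g\phi$ and the functions $(\partial_S g)\,\phi$ appearing along the way are admissible test functions in the relevant $\mathcal{C}^1$ classes, so that every pairing with $\nu$ is legitimate. Once the structure above is established these steps are mechanical, and the constant $\sigma$ is inherited unchanged from Theorem \ref{result2}.
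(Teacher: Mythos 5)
Your proposal is correct, and its technical core --- showing that multiplication of $\muinv{-}$ by a fixed function in $\mathcal{C}^1_\Lambda(X)$ preserves membership in $\mathcal{B}(\cdot,3\alpha_0,\theta_0)$, via a Leibniz expansion whose only inputs are the sup norms of the mixed partials of the multiplier and the bound $\tnorm{\charf{(0,1]^{\Lambda'}}\muinv{-}}{\theta_0}\leq K_0\,(3\alpha_0)^{\norm{\Lambda'}}$ --- is exactly the paper's key step; the paper packages the same identity in inclusion-exclusion form, $\psi\,\partial_{\tilde\Lambda}\tilde\varphi=\sum_{V\subseteq\tilde\Lambda}(-1)^{\norm{\tilde\Lambda\setminus V}}\,\partial_V\bigl(\tilde\varphi\,\partial_{\tilde\Lambda\setminus V}\psi\bigr)$, which avoids your recursion but produces the same kind of bound, with the same implicit reliance on boundedness of the mixed partials of $\psi$. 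Where you genuinely differ is the reduction to the convergence result: the paper forms the single signed measure of zero mass $\mu_\psi=\psi\,\muinv{-}-\muinv{-}(\psi)\,\muinv{-}$, checks it lies in $\mathcal{B}(K,3\alpha_0,\theta_0)$, and applies the internal Theorem \ref{prop12}, which is stated precisely for zero-mass signed measures; you instead shift $\psi$ by a constant to make it positive, normalize, and write $\psi\,\muinv{-}=a\mu_1-b\mu_2$ as a combination of two probability measures, so that only the publicly stated Theorem \ref{result2} is invoked (twice). Both are valid: your route buys self-containedness, quoting only the results announced in Section 2, at the cost of the shift-and-normalize bookkeeping and slightly larger constants, while the paper's route is more direct because the polymer-expansion theorem already handles signed measures of zero mass, making the positivity trick unnecessary. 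Incidentally, your second term vanishes identically, since $\mu_2=\muinv{-}$ is invariant so $\mu_2(\varphi\circ T^t)=\muinv{-}(\varphi)$; bounding it by Theorem \ref{result2} is harmless but not needed.
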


\section{Existence of a phase transition}
\label{peierls}

\subsection{Cluster expansion} 

For $n \in \nat$ fixed and for any finite $\Lambda \subset \ent$, let ${\mathcal{E}(\Lambda) \subseteq X}$ be the set of configurations $\vect{x}$ such that $T^n x_{p} > 0$ for every $p \in \Lambda$, where $T^n x_{p}$ is a notation for $(T^n \vect{x})_{p}$. Then, to any $\vect{x}$ in $\mathcal{E}(\Lambda)$, we can associate a cluster ${\Gamma \subseteq \ent\times \{0,\ldots,n\}}$ using the following rules:
\begin{enumerate}
\item At time $n$, we add every $(p,n)$ with $p \in \Lambda$ to $\Gamma$.
\item For every $t \in \{0,\ldots,n-1\}$ and starting from $t = n-1$, if $(p,t+1)$ already belongs to $\Gamma$, $T^t x_p > 0$ and $T^t x_{p+1} > 0$, we add $(p,t)$ and $(p+1,t)$ to $\Gamma$.
\end{enumerate}
An example of such a cluster can be found in Figure \ref{fig1}.
\begin{figure}
\centering
\includegraphics{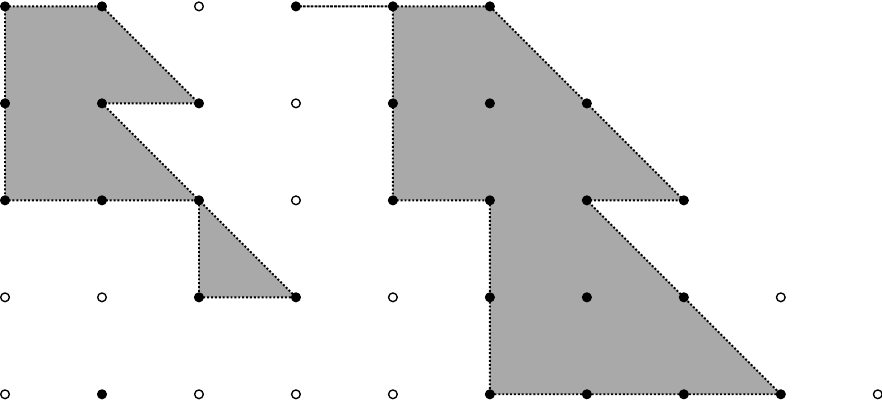}
\caption{Example of a cluster. White points are negative sites and black points positive sites.\label{fig1}}
\end{figure}

Let $g$ be the application mapping $\vect{x}$ onto $\Gamma$, and $\mathcal{G}(\Lambda)$ be the image of $\mathcal{E}(\Lambda)$ under $g$. Then:
\begin{equation}
\mathcal{E}(\Lambda) = \bigcup_{\Gamma \in \mathcal{G}(\Lambda)} g^{-1}\Gamma,
\end{equation}
or equivalently, in term of characteristic functions:
\begin{equation}
\charf{(0,1]^\Lambda}(T^n \vect{x}) = \sum_{\Gamma \in \mathcal{G}(\Lambda)}\charf{g^{-1}\Gamma}(\vect{x})\label{eq4.-1}
\end{equation}
If we define  $\partial \Gamma$ by
\begin{equation}
\partial \Gamma = \Big\{\,(p,t) \in \Gamma \, \Big|\, t = 0\, \mathrm{or}\, (p,t-1) \notin \Gamma\, \mathrm{or}\, (p+1,t-1) \notin \Gamma \,\Big\},
\end{equation}
and if we define $\Gamma_t = \{\, q \in \ent \,|\, (q,t) \in \Gamma\,\}$ and $\partial \Gamma_t = \{\, q \in \ent \,|\, (q,t) \in \partial \Gamma\,\}$ the restrictions of respectively $\Gamma$ and $\partial \Gamma$ to time $t$, we can see that the characteristic function of $g^{-1}\Gamma$ can be rewritten as:
\begin{align}
& \charf{g^{-1} \Gamma} \left(\, \vect{x}\,\right) \nonumber\\
&\qquad = \prod_{t = 0}^{n}\ \bigg[ \prod_{p \in \Gamma_t } \charf{(0,1]}(T^t x_p)  \prod_{p \in \partial\Gamma_{t+1}} \left(1 - \charf{(0,1]}(T^t x_p) \ \charf{(0,1]}(T^t x_{p+1})\right) \bigg]\nonumber\\
& \qquad = \prod_{t = 0}^{n}\ \charf{E(\Gamma,t)}(T^t  \vect{x}).\label{eq4}
\end{align}
where $E(\Gamma,t) \subseteq X$ is defined by:
\begin{align}
& \charf{E(\Gamma,t)} \left(\vect{x}\right)\nonumber\\
& \qquad = \prod_{p \in \Gamma_t } \charf{(0,1]}(x_p)  \prod_{p \in \partial\Gamma_{t+1}} \left(1 - \charf{(0,1]}(x_p) \ \charf{(0,1]}(x_{p+1})\right).\label{eq5}
\end{align}

Let us pick some arbitrary $\Gamma \in \mathcal{G}(\Lambda)$ . The cluster $\Gamma$ can be splitted in connected parts, respectively $\Gamma^{\scriptscriptstyle (k)}$ for $k = 1,\ldots,c$ with $c$ the number of connected parts. For any connected part of $\Gamma$, let say $\Gamma^{\scriptscriptstyle (k)}$, we define $\Lambda^{\scriptscriptstyle(k)} = \{p\,|\,(p,n) \in \Gamma^{\scriptscriptstyle(k)}\}$. The outer boundary of $\Gamma^{\scriptscriptstyle(k)}$ is now a closed loop, and we can always choose the orientation of the loop to be clockwise. The outer path of $\Gamma^{\scriptscriptstyle(k)}$ is now defined as the part of the closed loop that goes from $(\sup \Lambda^{\scriptscriptstyle(k)},n)$ to $(\inf \Lambda^{\scriptscriptstyle(k)},n)$. The outer paths associated to the cluster of Figure \ref{fig1} have been drawn at Figure \ref{fig2}.
\begin{figure}
\centering
\includegraphics{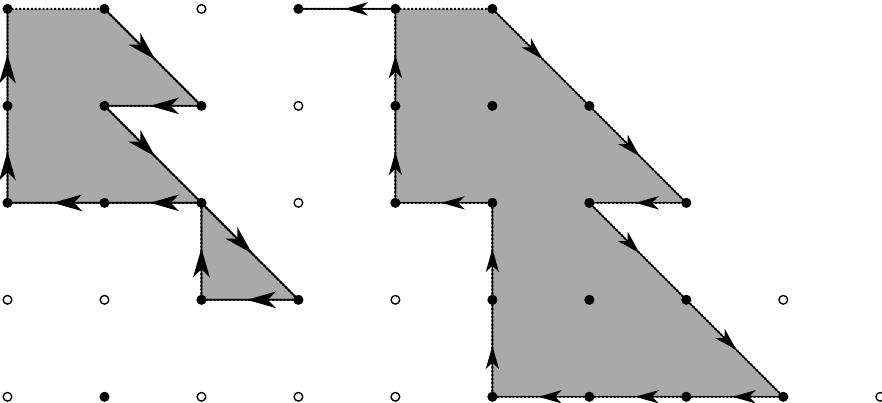}
\caption{Outer paths associated to the cluster of Figure \ref{fig1}.\label{fig2}}
\end{figure}

One can see that the cluster $\Gamma^{\scriptscriptstyle(k)}$ is univoquely defined by its outer path and that the outer path only makes jumps along the edges $(+1,-1)$, $(-1,0)$ and $(0,+1)$. Let $n_d^{\scriptscriptstyle(k)}$, $n_v^{\scriptscriptstyle(k)}$ and $n_h^{\scriptscriptstyle(k)}$ be the number of jumps in these directions respectively, and let $\partial \Gamma^{\scriptscriptstyle(k)} = \partial \Gamma \cap \Gamma^{\scriptscriptstyle(k)}$. Then, since the outer path starts in $(\sup \Lambda^{\scriptscriptstyle(k)},n)$ and ends in $(\inf \Lambda^{\scriptscriptstyle(k)},n)$, and since there is always an horizontal edge between two sites of the outer path belonging to $\partial \Gamma^{\scriptscriptstyle(k)}$, we have: 
\begin{equation*}
\begin{cases}
n_h^{\scriptscriptstyle(k)} \geq n_d^{\scriptscriptstyle(k)}  + \norm{\Lambda^{\scriptscriptstyle(k)}} -1\\
n_d^{\scriptscriptstyle(k)} - n_v^{\scriptscriptstyle(k)}= 0\\
\norm{\partial \Gamma^{\scriptscriptstyle(k)}} \geq  n_h^{\scriptscriptstyle(k)}+1
\end{cases}
\end{equation*}

We can now go back to the cluster $\Gamma$ by summing over the $c$ connected parts and defining $n_d = \sum_{k = 1}^c n_d^{\scriptscriptstyle(k)}$, $n_v = \sum_{k = 1}^c  n_v^{\scriptscriptstyle(k)}$ and $n_h = \sum_{k = 1}^c  n_h^{\scriptscriptstyle(k)}$ yields:
\begin{equation}\begin{cases}
{\displaystyle n_h \geq n_d + \norm{\Lambda} -c}\\
n_d =   n_v\\
\norm{\partial \Gamma} \geq  n_h +c
\end{cases}\label{eq7}\end{equation}

If we want to estimate the probability with respect to some initial signed measure $\mu$ that at time $n$ all the sites in $\Lambda$ are positive, we can now use (\ref{eq4.-1}) and (\ref{eq4}):
\begin{equation}\label{eq6}
\,\charf{(0,1]^{\Lambda}}\,T^n \mu = \sum_{\Gamma \in \mathcal{G}(\Lambda)}\,T^n\, \charf{g^{-1}\Gamma} \mu = \sum_{\Gamma \in \mathcal{G}(\Lambda)} \,\charf{(0,1]^{\Lambda}}\,\Bigl[\prod_{t = 0}^{n-1} \left(\, T \charf{E(\Gamma,t)}\,\right)\Bigr]  \mu.
\end{equation}
Of course, we assumed that the product of operators $\prod_{t = 0}^{n-1} \left(\, T \charf{E(\Gamma,t)}\right)$ is time-ordered.

The expansion of equation (\ref{eq6}) can be the starting point of what is called in Statistical Mechanics a Peierls argument: indeed, if we can prove that for any fixed cluster, the weight of the cluster decays exponentially with its size in some sense that we still have to clarify, and if we can prove that the number of clusters of fixed size grows at most exponentially with the size of the cluster, we can find an upper bound on the probability that all sites in some $\Lambda \subset \ent$ are positive at some time $n \in \nat$ with a simple geometric series. But before giving all the details of the Peierls argument, let us review some of the properties of $\var{\Lambda}$ and $\tnorm{\cdot}{\theta}$.

\subsection{Generalized Lasota-Yorke inequalities}

An important result for Interval Maps and Coupled Map Lattices is the Lasota-Yorke inequality \cite{LaYo73} which controls the growth of $\dnorm{\cdot}$ under the iterations of $T$. In this section, we will see that we can also control the growth of $\var{\Lambda}$ through a simple generalization of the usual Lasota-Yorke inequality.
\begin{proposition}[Generalized Lasota-Yorke inequalities]\label{prop2}
For every finite $\Lambda$ in $\ent$ and every $\mu \in L^1(X)$ such that $\var{\Lambda} \mu < \infty$, we have:
\begin{equation*}
\var{\Lambda} \left( T \mu\right) \leq  \sum_{\Omega \subseteq \Lambda} {\lambda_0}^{\norm{\Omega}}\ {D_0}^{\norm{\Lambda\setminus\Omega}} \ \var{\Omega}\mu.
\end{equation*}
\end{proposition}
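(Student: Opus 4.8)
The plan is to convert the definition (\ref{eq7.0}) of $\var{\Lambda}(T\mu)$ into a single integration-by-parts computation carried out simultaneously in the $\norm{\Lambda}$ coordinates indexed by $\Lambda$. First I would unfold the transfer operator: for any admissible test function $\varphi$ with $\norm{\varphi}_\infty\le 1$,
\[
(T\mu)(\partial_\Lambda\varphi)=\mu\big((\partial_\Lambda\varphi)\circ T\big).
\]
The structural fact that makes everything work is that $T=\Phi_\epsilon\circ\tau^{\ent}$ has a \emph{diagonal} Jacobian: on each region where the sign pattern is fixed, $\Phi_\epsilon$ is a pure translation, so $\partial_{x_q}(T\vect x)_p=\tau'(x_p)\,\delta_{pq}$ wherever $T$ is differentiable; the coupling contributes neither off-diagonal derivatives nor any Jacobian factor, only extra discontinuity hyperplanes $\{x_q=0\}$. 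Applying the chain rule in each of the finitely many coordinates of $\Lambda$ (those outside $\Lambda$ being carried along as parameters) then yields, almost everywhere,
\[
(\partial_\Lambda\varphi)\circ T=\frac{\partial_\Lambda(\varphi\circ T)}{\prod_{p\in\Lambda}\tau'(x_p)}.
\]

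Next I would expand this into the required sum over subsets. For each $p\in\Lambda$ the operator identity $\tfrac{1}{\tau'(x_p)}\partial_{x_p}=\partial_{x_p}\tfrac{1}{\tau'(x_p)}+\tfrac{\tau''(x_p)}{\tau'(x_p)^2}$ holds, and since the factors attached to distinct coordinates commute, multiplying these identities over $p\in\Lambda$ gives
\[
\frac{\partial_\Lambda(\varphi\circ T)}{\prod_{p\in\Lambda}\tau'(x_p)}=\sum_{\Omega\subseteq\Lambda}\partial_\Omega\Big[\Big(\prod_{p\in\Omega}\tfrac{1}{\tau'(x_p)}\Big)\Big(\prod_{p\in\Lambda\setminus\Omega}\tfrac{\tau''(x_p)}{\tau'(x_p)^2}\Big)\,(\varphi\circ T)\Big],
\]
where for $p\in\Omega$ the $x_p$-derivative is kept outside and for $p\in\Lambda\setminus\Omega$ it has been traded for the bulk factor $\tau''/(\tau')^2$. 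Pairing with $\mu$, each summand reads $\mu(\partial_\Omega\phi_\Omega)$ with $\phi_\Omega$ a scalar multiple of $\varphi\circ T$ obeying $\norm{\phi_\Omega}_\infty\le\kappa^{-\norm{\Omega}}\norm{\tfrac{\tau''}{(\tau')^2}}_\infty^{\norm{\Lambda\setminus\Omega}}$, so one is tempted to read off $\mu(\partial_\Omega\phi_\Omega)\le\norm{\phi_\Omega}_\infty\,\var{\Omega}\mu$ directly from (\ref{eq7.0}).

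The hard part is that $\phi_\Omega$ is \emph{not} an admissible test function for $\var{\Omega}$: both $\varphi\circ T$ (through the discontinuities of $\Phi_\epsilon$ and the branch structure of $\tau$) and the weights $1/\tau'(x_p)$ jump across the hyperplanes $\{x_p=\zeta_i\}$ and $\{x_p=0\}$ for $p\in\Omega$. I would therefore regularize $\phi_\Omega$ and keep track of these jumps: differentiating across such a hyperplane in a direction $p\in\Omega$ produces a boundary (trace) term carrying one derivative fewer, hence a $\var{\Omega\setminus\{p\}}\mu$ contribution. Controlling these traces is the multidimensional analogue of the one-dimensional estimate bounding the value of a density at a branch endpoint by $\tfrac{1}{\min_i\norm{J_i}}\norm{h}_{L^1(I)}+\dnorm{h}_{BV}$, which underlies the classical inequality (\ref{eq0.1}); since $\tau$ has at most $N\le\kappa D_0$ branches and $\Phi_\epsilon$ only adds the single threshold $0$ (harmlessly, being a translation on either side), summing the two-sided traces over the branch endpoints contributes, per coordinate, the constant $\tfrac{2}{\kappa\min_i\norm{J_i}}$, which together with the bulk term $\norm{\tfrac{\tau''}{(\tau')^2}}_\infty$ rebuilds $D_0$, while the trace contributions attached to a kept derivative are absorbed into the kept-derivative factor that the bookkeeping raises from $\kappa^{-1}$ to $\lambda_0=\tfrac{2}{\kappa}$. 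Because all of this is multiplicative over the coordinates of $\Lambda$ — each contributing either a kept-derivative factor bounded by $\lambda_0$ (so $p\in\Omega$) or a dropped-derivative factor, bulk plus boundary, bounded by $D_0$ (so $p\in\Lambda\setminus\Omega$) — the coefficient of $\var{\Omega}\mu$ in the total is exactly $\lambda_0^{\norm{\Omega}}D_0^{\norm{\Lambda\setminus\Omega}}$, which is the claim. I expect the only genuinely delicate step to be the constant bookkeeping in these trace estimates, in particular verifying that the coupling's extra discontinuity at $x_q=0$ does not corrupt the constants $\lambda_0$ and $D_0$ inherited from the single-site map.
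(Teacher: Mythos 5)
Your plan is essentially the paper's own proof: the chain-rule/diagonal-Jacobian observation, the per-coordinate identity $\tfrac{1}{\tau'}\partial_p = \partial_p\circ\tfrac{1}{\tau'} + \tfrac{\tau''}{(\tau')^2}$, the expansion over subsets $\Omega\subseteq\Lambda$, and the duality bound $\mu(\partial_\Omega\psi)\leq \var{\Omega}(\mu)\,\norm{\psi}_\infty$ are exactly what appear in (\ref{eq10.2}), (\ref{eq16}) and Proposition \ref{propA4}, and your ``delicate'' regularization step is precisely the paper's interpolation/finite-difference decomposition $\charf{J_i}\partial_p\psi = \charf{J_i}\partial_p(R_{i,p}\psi)+\charf{J_i}\Delta_{i,p}\psi$ of (\ref{eq10.3}), whose operators $\mathcal{K}_{i,p}$ and $\mathcal{D}_{i,p}$ yield the very constants you predict, $\norm{\mathcal{K}_{i,p}}\leq \tfrac{2}{\kappa}=\lambda_0$ and $\norm{\mathcal{D}_{i,p}}\leq \tfrac{2}{\kappa\min_i\norm{J_i}}+\norm{\tfrac{\tau''}{(\tau')^2}}_\infty\leq D_0$ as in (\ref{eq17})--(\ref{eq17.0}); your point about the coupling's discontinuity at $x_q=0$ is likewise handled in the paper by taking the partition $\{J_i\}$ finer than $\{[-1,0],(0,1]\}$, across which $\Phi_\epsilon$ acts by translation. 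So the proposal is correct and follows the same route, with the trace bookkeeping you deferred being exactly the content of the paper's explicit operator construction.
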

\begin{proof}
Let $\varphi$ be some arbitrary function in $\mathcal{C}_{\Lambda}^1(X)$. Then, if $x_p$ is restricted to one of the intervals $J_i$, we see that $\varphi \circ T$ is differentiable with respect to $x_p$ and since we assumed that $\inf{\norm{\tau'}} >0$, we get:
\begin{align}
\charf{J_i}(x_p)\,(\partial_p \varphi )\circ T &  = \charf{J_i}(x_p)\,\frac{\partial_p( \varphi \circ T)}{\tau'(x_p)}\nonumber\\
& =\charf{J_i}(x_p)\  \partial_p\left(\frac{\varphi \circ T}{\tau'(x_p)}\right) - \charf{J_i}(x_p)\  \left({\frac{1}{\tau'}}\right)'\!\!(x_p)\;\varphi \circ T\label{eq10.2}.
\end{align}

Now, for every $p \in \ent$ and $i \in \{1,\ldots,N\}$, we introduce the operators $\Delta_{i,p}$ and $R_{i,p}$:
\begin{align}
\Delta_{i,p} \psi(\vect{x}) & = \frac{\psi(\zeta_{i+1},\vect{x}_{\neq p}) - \psi(\zeta_{i},\vect{x}_{\neq p})}{\zeta_{i+1}-\zeta_{i}} \label{eq10.2.0}\\ 
\begin{split}
R_{i,p}\psi(\vect{x}) & = \int_{\zeta_i}^{x_p} \Big(\partial_p\psi(\xi_p,\vect{x}_{\neq p}) - \Delta_{i,p} \psi(\xi_p,\vect{x}_{\neq p})\Big)\, d \xi_p \\
& =  \frac{\zeta_{i+1} - x_p}{\zeta_{i+1} -\zeta_{i}} \left(\psi(\vect{x}) - \psi(\zeta_{i},\vect{x}_{\neq p})\right) - \frac{x_p - \zeta_{i}}{\zeta_{i+1} -\zeta_{i}} \left(\psi(\zeta_{i+1},\vect{x}_{\neq p}) - \psi(\vect{x})\right)\label{eq10.2.1}
\end{split}
\end{align}
One might remark that if $\psi$ is a piecewise continuously differentiable function  with its discontinuities located at the boundaries of the intervals $J_i$, the function $\sum_i \charf{J_i}(x_p) R_{i,p}\psi(\vect{x})$ vanishes at the boundaries of the $J_i$ and is therefore not only piecewise continuously differentiable but also continuous with respect to $x_p$. Moreover, the definition of $R_{i,p}$ implies that:
\begin{equation}\label{eq10.3}
\charf{J_i}(x_p) \partial_p \psi = \charf{J_i}(x_p)  \partial_p(R_{i,p}\psi)  + \charf{J_i}(x_p) \Delta_{i,p}\psi.
\end{equation}

Therefore, using (\ref{eq10.3}) in (\ref{eq10.2}), we find:
\begin{align}
& \charf{J_i}(x_p)\,(\partial_p \varphi )\circ T\nonumber\\
 &  =   \charf{J_i}(x_p)\  \left(\partial_p R_{i,p}\left(\frac{\varphi \circ T}{\tau'(x_p)}\right) +   \Delta_{i,p}\left(\frac{\varphi \circ T}{\tau'(x_p)}\right) -  \left({\tfrac{1}{\tau'}}\right)'\!\!(x_p)\;\varphi \circ T\right) \nonumber\\
& =\charf{J_i}(x_p)\ \left(\partial_p \mathcal{K}_{i,p} + \mathcal{D}_{i,p}\right)(\varphi \circ T),
\end{align}
where the operators $\mathcal{K}_{i,p}$ and $\mathcal{D}_{i,p}$ are defined by:
\begin{equation}\label{eq15}\begin{split}
\mathcal{K}_{i,p}& : \psi \mapsto R_{i,p}\left(\frac{\psi}{\tau'(x_p)}\right)\\
\mathcal{D}_{i,p}& : \psi \mapsto \Delta_{i,p}\left(\frac{\psi}{\tau'(x_p)}\right) - \left({\frac{1}{\tau'}}\right)'\!\!(x_p)\;\psi 
\end{split}\end{equation}

For the proof of the usual Lasota-Yorke inequality, we just have to perform this construction for some fixed $p$ in $\ent$. But since we have multiple derivatives, we will iterate this for every $p$ in $\Lambda$. For any $\vect{i}_{\Lambda} = \{i_p\}_{p \in \Lambda}$, we define the set ${J(\vect{i}_{\Lambda}) = \{\,\vect{x}\ |\ \forall\, p \in \Lambda : x_p \in J_{i_p}\, \}}$. Since the operators $\partial_p$, $\mathcal{K}_{i,q}$ and $\mathcal{D}_{i,s}$ commute as long as $q$, $p$ and $s$ are different, we have:
\begin{align}
& \mu\left(\,\big(\,\partial_{\Lambda}\varphi\big)\circ T \,\right) =  \mu \left(\,\sum_{\vect{i}_{\Lambda}}\;\charf{J(\vect{i}_{\Lambda})}\,\big(\,\partial_{\Lambda}\;\varphi\big)\circ T \,\right)\nonumber\\
& \quad = \mu \left(\,\sum_{\vect{i}_{\Lambda}}\;\charf{J(\vect{i}_{\Lambda})}\,\prod_{p \in \Lambda} \left(\partial_p \mathcal{K}_{i_p,p} + \mathcal{D}_{i_p,p}\right)\;(\varphi \circ T)\,\right)\nonumber\\
& \quad = \sum_{\Omega \subseteq \Lambda} \mu \left(\,\sum_{\vect{i}_{\Lambda}}\;\charf{J(\vect{i}_{\Lambda})}\;\partial_{\Omega}\,\biggl[ \prod_{p \in \Omega} \mathcal{K}_{i_p,p}\biggr]\biggl[\prod_{p \in \Lambda \setminus \Omega} \mathcal{D}_{i_p,p}\biggr]\,(\varphi \circ T)\,\right)\label{eq16}.
\end{align}

If for every $\Omega \subseteq \Lambda$, we define the function:
\begin{equation*}
\psi_{\Omega} = \sum_{\vect{i}_{\Lambda}}\;\charf{J(\vect{i}_{\Lambda})}\;\biggl[ \prod_{p \in \Omega} \mathcal{K}_{i_p,p}\biggr]\biggl[\prod_{p \in \Lambda \setminus \Omega} \mathcal{D}_{i_p,p}\biggr]\,(\varphi \circ T),
\end{equation*}
we can see that, by definition of the operators $\mathcal{K}_{i_p,p}$, $\psi_{\Omega}$ vanishes when $x_p = \zeta_i$, for each $p \in \Omega$. Therefore, as long as $p$ is in $\Omega$, we have:
\begin{align}
\psi_{\Omega}(\vect{x}) = \sum_i \int_{-1}^{x_p} \charf{J_i}(\xi_p) \partial_p \psi_{\Omega}(\xi_p, \vect{x}_{\neq p}) \, \dif \xi_p.
\end{align}
Iterating this for every $p \in \Omega$ and taking the derivative with respect to all these variables yields:
\begin{equation}
\partial_{\Omega} \psi_{\Omega} = \sum_{\vect{i}_{\Lambda}}\;\charf{J(\vect{i}_{\Lambda})}\;\partial_{\Omega}\,\biggl[ \prod_{p \in \Omega} \mathcal{K}_{i_p,p}\biggr]\biggl[\prod_{p \in \Lambda \setminus \Omega} \mathcal{D}_{i_p,p}\biggr]\,(\varphi \circ T).
\end{equation}
Therefore, (\ref{eq16}) becomes:
\begin{equation*}
\mu\Bigl(\,\big(\,\partial_{\Lambda}\varphi\big)\circ T \,\Bigr)  = \sum_{\Omega \subseteq \Lambda} \mu \left(\,\partial_{\Omega}\,\psi_{\Omega}\,\right) 
\end{equation*}
but since $\psi_{\Omega}$ is piecewise continuous with respect to $\vect{x}_{\neq \Omega}$, continuous and piecewise continuously differentiable  with respect to $x_{\Omega}$, we can apply Proposition \ref{propA4} from the Appendix, and we get:
\begin{align}
& \mu\Bigl(\,\big(\,\partial_{\Lambda}\varphi\big)\circ T \,\Bigr) \leq \sum_{\Omega \subseteq \Lambda}\; \sup_{\vect{i}_{\Lambda}} \norm{\; \biggl[ \prod_{p \in \Omega} \mathcal{K}_{i_p,p}\biggr]\biggl[\prod_{p \in \Lambda \setminus \Omega} \mathcal{D}_{i_p,p}\biggr]\,(\varphi \circ T)\;}_{\infty} \;\var{\Omega}(\,\mu\,)\label{eq16.1}.
\end{align}
We can now check that for any continuous function $\psi$, by the definition of $\mathcal{K}_{i_p,p}$ and $\mathcal{D}_{i_p,p}$ from (\ref{eq15}), $R_{i_p,p}$ and $\Delta_{i_p,p}$ from (\ref{eq10.2.1}), and $\lambda_0$ and $D_0$  from the assumptions on $\tau$, we have:
\begin{equation}\label{eq17}
\norm{\mathcal{K}_{i_p,p} \psi}_{\infty} \leq \frac{1}{\kappa} \norm{R_{i_p,p} \psi}_{\infty} \leq  \frac{2}{\kappa} \norm{\psi}_{\infty} \leq \lambda_0 \norm{\psi}_{\infty}
\end{equation}
\begin{equation}\label{eq17.0}\begin{split}
\norm{\mathcal{D}_{i_p,p} \psi}_{\infty} & \leq \frac{1}{\kappa} \norm{\Delta_{i_p,p} \psi}_{\infty} + \norm{\left(\frac{1}{\tau'}\right)'}_\infty \norm{\psi}_{\infty}\\
& \leq \left(\frac{2}{\kappa\ \min_i \norm{J_{i}} }+\norm{\frac{\tau''}{\left(\tau'\right)^2}}_{\infty}\right) \norm{\psi}_{\infty} \leq D_0 \norm{\psi}_{\infty}
\end{split}
\end{equation}
Consequently, from (\ref{eq16.1}), we get the expected result:
\begin{equation*}
\mu\Bigl(\,\big(\,\partial_{\Lambda}\varphi\big)\circ T \,\Bigr)  \leq \norm{\varphi}_{\infty}\;\sum_{\Omega\subseteq \Lambda} {\lambda_0}^{\norm{\Omega}}\ {D_0}^{\norm{\Lambda \setminus \Omega}}\ \var{\Omega}\mu.
\end{equation*}
\end{proof}
A first consequence of Proposition \ref{prop2} is the Lasota-Yorke inequality. Indeed, if we take $\Lambda$ to be a singleton, and recall that $\sup_{p \in \ent} \var{\{p\}}\mu = \dnorm{\mu}$, we have:
\begin{equation}\label{eq17.1}
\dnorm{T \mu} = \sup_{p \in \ent}\ \var{\{p\}}T \mu \leq \lambda_0 \dnorm{\mu} + D_0 \norm{\mu}.
\end{equation}
This implies that the operator $T^t$ is uniformly bounded in $\mathcal{B}(X)$, because:
\begin{align}\label{eq17.2}
\dnorm{T^t \mu} & \leq \lambda_0^t \dnorm{\mu} + D_0 \sum_{k = 0}^{t-1}\  {\lambda_0}^k\  \norm{T^k \mu} \leq \lambda_0^t \dnorm{\mu} + D_0 \sum_{k = 0}^{t-1}\  {\lambda_0}^k\  \norm{\mu}\nonumber\\
& \leq \left({\lambda_0}^t + \tfrac{D_0}{1 - \lambda_0}\right)\, \dnorm{\mu}.
\end{align}
Therefore, if we take as initial measure $\sleb{-}$, the Lebesgue measure concentrated on $[-1,0]$, the sequence $\frac{1}{n} \sum_{t=0}^{n-1} T^t\sleb{-}$ is uniformly bounded in $\mathcal{B}(X)$ because ${\dnorm{\sleb{-}} = 2}$, and we can choose a subsequence which converges weakly to an invariant measure in $\mathcal{B}(X)$. Let $\muinv{-}$ be such an invariant measure.

Another important consequence of Proposition \ref{prop2} is the fact that the transfer operator $T$ is bounded in the $\theta$-norm, for $\theta$ large enough.
\begin{corollary}\label{prop4}
For any $\theta \geq\frac{D_0}{1 - \lambda_0}$ and any $\mu \in L^1(X)$ with bounded $\theta$-norm, we have: 
\begin{equation*}
\tnorm{T\mu}{\theta}  \leq \tnorm{\mu}{\theta}.
\end{equation*}
\end{corollary}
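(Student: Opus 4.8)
The plan is to reduce everything to the generalized Lasota--Yorke inequality of Proposition \ref{prop2} and then to exploit the product structure of the subset sum it produces. By definition of the $\theta$-norm in (\ref{eq9}), it suffices to bound $\theta^{-\norm{\Lambda}}\var{\Lambda}(T\mu)$ for an arbitrary finite $\Lambda \subset \ent$ and to show this quantity is at most $\tnorm{\mu}{\theta}$; taking the supremum over all finite $\Lambda$ then yields the claim.

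First I would apply Proposition \ref{prop2} to control $\var{\Lambda}(T\mu)$ by $\sum_{\Omega\subseteq\Lambda}{\lambda_0}^{\norm{\Omega}}\,{D_0}^{\norm{\Lambda\setminus\Omega}}\,\var{\Omega}\mu$. The key substitution is to replace each variation $\var{\Omega}\mu$ on the right by the bound coming directly from the definition of the $\theta$-norm, namely $\var{\Omega}\mu \leq \theta^{\norm{\Omega}}\,\tnorm{\mu}{\theta}$. After this step every summand carries a common factor $\tnorm{\mu}{\theta}$, which pulls out of the sum.

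The main computation is then to absorb the powers of $\theta$. Since $\Omega\subseteq\Lambda$ we have $\norm{\Lambda}-\norm{\Omega}=\norm{\Lambda\setminus\Omega}$, so the prefactor $\theta^{-\norm{\Lambda}}$ combined with $\theta^{\norm{\Omega}}$ leaves $\theta^{-\norm{\Lambda\setminus\Omega}}$, and each term becomes ${\lambda_0}^{\norm{\Omega}}(D_0/\theta)^{\norm{\Lambda\setminus\Omega}}$. The one genuinely delicate point --- though it is entirely elementary --- is recognizing that this subset sum factorizes as a product over the sites of $\Lambda$: each site contributes independently either $\lambda_0$ (if it lies in $\Omega$) or $D_0/\theta$ (if it lies in $\Lambda\setminus\Omega$), so that $\sum_{\Omega\subseteq\Lambda}{\lambda_0}^{\norm{\Omega}}(D_0/\theta)^{\norm{\Lambda\setminus\Omega}} = (\lambda_0 + D_0/\theta)^{\norm{\Lambda}}$.

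Finally, the hypothesis $\theta \geq \frac{D_0}{1-\lambda_0}$ is exactly what forces the base $\lambda_0 + D_0/\theta \leq 1$, hence $(\lambda_0+D_0/\theta)^{\norm{\Lambda}}\leq 1$ for every $\Lambda$. Combining these steps gives $\theta^{-\norm{\Lambda}}\var{\Lambda}(T\mu)\leq\tnorm{\mu}{\theta}$, and taking the supremum over all finite $\Lambda$ completes the argument. I do not expect any serious obstacle: the whole analytic difficulty has already been front-loaded into Proposition \ref{prop2}, and the corollary is essentially a bookkeeping exercise once the product structure of the subset sum is observed and the threshold condition on $\theta$ is matched to the requirement $\lambda_0 + D_0/\theta \leq 1$.
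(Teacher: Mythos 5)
Your proposal is correct and coincides with the paper's own proof: both apply Proposition \ref{prop2}, bound each $\var{\Omega}\mu$ by $\theta^{\norm{\Omega}}\tnorm{\mu}{\theta}$, collapse the subset sum into $\left(\lambda_0 + D_0/\theta\right)^{\norm{\Lambda}}$, and invoke the hypothesis $\theta \geq \frac{D_0}{1-\lambda_0}$ to make this factor at most $1$. There is nothing to add; the argument is exactly the intended bookkeeping exercise.
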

\begin{proof}
We use Proposition \ref{prop2} and the fact that $\theta^{- \norm{\Omega}} \var{\Omega}\mu \leq \tnorm{\mu}{\theta}$:
\begin{align*}
\theta^{- \norm{\Lambda}}\,\var{\Lambda}\left( T \mu\right) & \leq \theta^{ - \norm{\Lambda}} \;  \sum_{\Omega \subseteq \Lambda}\; \left(\lambda_0 \theta \right)^{\norm{\Omega}} {D_0}^{\norm{\Lambda} - \norm{\Omega}}\,\tnorm{\mu}{\theta}\\
& \leq \left( \lambda_0 + \tfrac{D_0}{\theta}\right)^{\norm{\Lambda}} \; \tnorm{\mu}{\theta}.
\end{align*}
The lower bound on $\theta$ then implies $\left( \lambda_0 + \tfrac{D_0}{\theta}\right) \leq 1$ and so $\tnorm{T\mu}{\theta}  \leq \tnorm{\mu}{\theta}$.\end{proof}

\subsection{The Peierls argument}

The bottom line of the Peierls argument is to show that the number of clusters of a fixed size grows at most exponentially with the size, and that the probability of having a large cluster decays exponentially with the size of the cluster. The first estimate, sometimes called the entropic estimate, is quite standard. However, the second estimate, also called the energetic estimate, will become problematic in the case of CML. Indeed, for any finite $\Lambda \subset \ent$, if ${E_{\epsilon}}^{\Lambda} \subseteq X$ is the set of configurations $\vect{x}$ such that $x_p \in E_{\epsilon}$ for any $p \in \Lambda$, we know that the Lebesgue measure of ${E_{\epsilon}}^{\Lambda}$ is smaller than $\norm{E_{\epsilon}}^{\raisebox{-1pt}{$\scriptstyle\Lambda$}}$, but we do not expect this to be true for an arbitrary signed measure, even if this measure is of bounded variation. For a measure of bounded variation, the best estimate one can find is $\lvert\charf{E_{\epsilon}^{\Lambda}} \mu \rvert \leq \norm{E_{\epsilon}}\,\dnorm{ \mu}$.

Therefore, we need to introduce extra regularity conditions on the initial measures. For instance, one could follow Bardet and Keller \cite{BaKe06} and consider only totally decoupled initial measures. But in order to prove the exponential convergence to equilibrium, we will need to apply the Peierls argument to an invariant measure which is not totally decoupled as long as $\epsilon \neq 0$. We will solve this problem in a new approach that relies on $\var{\Lambda}$ and the $\theta$-norm.

First, let us see how $\var{\Lambda}$ allows us to control $\lvert\charf{E_{\epsilon}^{\Lambda}} \mu \rvert$. If we define the operator $\mathcal{E}_{p}$ by:
\begin{equation}\label{eq7.2}
\mathcal{E}_{p} \psi(\vect{x}) = \int_{0}^{x_p} \charf{E_{\epsilon}}(\xi_p)\,\psi(\xi_p, \vect{x}_{\neq p})\, \mathrm d \xi_p.
\end{equation}
the symmetry assumption on $\tau$ implies that:
\begin{equation*} 
\norm{E_{\epsilon}\cap[0,1]} = \norm{E_{\epsilon}\cap[-1,0]} = \frac{\norm{E_{\epsilon}}}{2} 
\end{equation*}
and so:
\begin{equation}\label{eq7.3}
\norm{\mathcal{E}_{p} \psi}_{\infty} \leq \norm{\psi}_{\infty}\,\sup_{x_p \in [-1,1]} \norm{\int_{0}^{x_p} \charf{E_{\epsilon}}(\xi_p)\,\mathrm d \xi_p} \leq \tfrac{\norm{E_{\epsilon}}}{2}\,\norm{\psi}_{\infty}.
\end{equation}
We can check that $\charf{E_{\epsilon}}(x_p) \psi(\vect{x}) = \partial_p \mathcal{E}_{p} \psi(\vect{x})$. Hence, if $\Omega$ and $\Lambda$ are two disjoint subsets of $\ent$, we have:
\begin{align*}
\charf{{E_{\epsilon}}^{\Lambda}}\mu(\partial_{\Omega} \varphi) & = \mu(\,\charf{{E_{\epsilon}}^{\Lambda}} \,\partial_{\Omega}\varphi\,) =  \mu\left(\,\partial_{\Lambda \cup \Omega} \left[\prod_{p \in \Lambda}\mathcal{E}_{p}\right]\varphi\,\right) \leq \left(\tfrac{\norm{E_{\epsilon}}}{2}\right)^{\norm{\Lambda}} \, \var{\Omega\cup \Lambda}\mu \,\norm{\varphi}_{\infty}.
\end{align*}
This finally implies an estimate on $\charf{{E_{\epsilon}}^{\Lambda}}\mu$ with the appropriate exponential decay:
\begin{equation}\label{eq8}
\var{\Omega}\charf{{E_{\epsilon}}^{\Lambda}}\mu \leq \left(\tfrac{\norm{E_{\epsilon}}}{2}\right)^{\norm{\Lambda}} \, \var{\Omega\cup \Lambda}\mu . 
\end{equation}

However, if the assumption  $\Lambda \cap \Omega = \varnothing$ is not fulfilled, we can not use such a simple method without having to consider second derivatives with respect to some variables, which we do not expect to behave nicely. But the dynamic can help us, and with the generalized Lasota-Yorke inequalities, we have:
\begin{lemma}\label{prop6}For any measure $\mu$, any cluster $\Gamma$, any finite $\Omega \subseteq \ent$ and any $\Lambda \subseteq \Omega$, we have:
\begin{align*}
\var{\Omega} \left( T \charf{E(\Gamma,t)} \charf{{E_{\epsilon}}^{\Lambda}}\mu\right) \leq \sum_{V_1 \subseteq \Lambda } \; \sum_{V_0 \subseteq \Omega\setminus \Lambda } \; {\lambda_1}^{\norm{V_1}} {D_1}^{\norm{\Lambda \setminus V_1}}\,{\lambda_0}^{\norm{V_0}} \, {D_0}^{\norm{\Omega \setminus (\Lambda \cup V_0)}}\,\var{V_1 \cup V_0}\mu ,
\end{align*}
where $\lambda_1$ and $D_1$ were defined in (\ref{eq3.2}) and $E(\Gamma,t)$ in (\ref{eq5}).
\end{lemma}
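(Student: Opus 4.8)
The plan is to run the same machinery as in the proof of Proposition \ref{prop2}, but to treat the sites of $\Lambda$ with a refined splitting that also absorbs the factor $\charf{{E_{\epsilon}}^{\Lambda}}$. Fix a test function $\varphi \in \mathcal{C}^1_{\Omega}(X)$ with $\norm{\varphi}_{\infty} \leq 1$; since multiplication commutes, I want to bound $\mu\bigl(\charf{E(\Gamma,t)}\,\charf{{E_{\epsilon}}^{\Lambda}}\,(\partial_{\Omega}\varphi)\circ T\bigr)$ and then take the supremum over $\varphi$. As in Proposition \ref{prop2}, I would first cut $X$ into the branches $J(\vect{i}_{\Omega})$ on which every $x_p$, $p\in\Omega$, stays in a fixed monotonicity interval $J_{i_p}$. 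On such a branch the multiplier $\charf{E(\Gamma,t)}$ is bounded by $1$ and, because $\tau$ preserves the sign of each coordinate, does not depend on the variables $x_{\Omega}$; hence it commutes with every $\partial_p$, $p\in\Omega$, and with the operators introduced below, and it simply rides along as a harmless weight whose discontinuities lie only in non-differentiated directions.

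At a site $p\in\Omega\setminus\Lambda$ I would use verbatim the decomposition $\charf{J_{i_p}}(\partial_p\varphi)\circ T = \charf{J_{i_p}}(\partial_p\mathcal{K}_{i_p,p}+\mathcal{D}_{i_p,p})(\varphi\circ T)$ from the proof of Proposition \ref{prop2}, which offers, per site, the dichotomy between keeping the derivative ($\mathcal{K}$-term, sup-norm cost $\lambda_0$, the site joining $V_0$) and absorbing it ($\mathcal{D}$-term, cost $D_0$, the site staying out of $V_0$). The real work is at a site $p\in\Lambda$, where the extra factor $\charf{E_{\epsilon}}(x_p)$ sits on a \emph{differentiated} coordinate, so that estimate (\ref{eq8}) --- which requires $\Lambda\cap\Omega=\varnothing$ --- is unavailable. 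Here I would expand, on $J_{i_p}$,
\begin{align*}
\charf{E_{\epsilon}}(x_p)\,\charf{J_{i_p}}\,(\partial_p\varphi)\circ T
&= \charf{E_{\epsilon}}\,\charf{J_{i_p}}\,\partial_p\mathcal{K}_{i_p,p}(\varphi\circ T)
 + \charf{E_{\epsilon}}\,\charf{J_{i_p}}\,\Delta_{i_p,p}\!\bigl(\tfrac{\varphi\circ T}{\tau'}\bigr)\\
&\quad - \charf{E_{\epsilon}}\,\charf{J_{i_p}}\,\bigl(\tfrac{1}{\tau'}\bigr)'(\varphi\circ T),
\end{align*}
and call the three terms on the right-hand side $(A)$, $(B)$ and $(C)$. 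The key device is to trade the multiplier $\charf{E_{\epsilon}}(x_p)$ for a derivative via the operator $\mathcal{E}_p$ of (\ref{eq7.2}): since $\charf{E_{\epsilon}}(x_p)\,\psi = \partial_p\mathcal{E}_p\psi$, terms $(A)$ and $(C)$ turn into $\partial_p$ of bounded functions. For $(A)$, integrating by parts inside $\mathcal{E}_p$ removes the inner derivative and (as $E_{\epsilon}\cap J_{i_p}$ reduces to a single interval) leaves only the two boundary values of $\mathcal{K}_{i_p,p}(\varphi\circ T)$, costing at most $2\lambda_0=\frac{4}{\kappa}$; for $(C)$, the bound (\ref{eq7.3}) on $\mathcal{E}_p$ supplies the smallness $\frac{\norm{E_{\epsilon}}}{2}$ and costs $\frac{D_0\norm{E_{\epsilon}}}{2}$. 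Together these give a derivative-carrying part $\partial_p G_p$ with $\norm{G_p}_{\infty}\leq\lambda_1$ (the site then joins $V_1$). Term $(B)$ carries no derivative; keeping $\charf{E_{\epsilon}}$ as a multiplier bounded by $1$, it costs $\frac{2}{\kappa\min_i\norm{J_i}}\leq D_1$ (the site then stays out of $V_1$). This is exactly the decomposition $\charf{E_{\epsilon}}(x_p)\charf{J_{i_p}}(\partial_p\varphi)\circ T = \partial_p G_p + H_p$ with $\norm{G_p}_{\infty}\leq\lambda_1$ and $\norm{H_p}_{\infty}\leq D_1$ that I need, where $\lambda_1$ and $D_1$ are the constants of (\ref{eq3.2}).

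Multiplying the per-site identities over all $p\in\Omega$ and expanding yields one term for each pair $(V_1,V_0)$ with $V_1\subseteq\Lambda$ and $V_0\subseteq\Omega\setminus\Lambda$: the surviving derivatives are precisely $\partial_{V_1\cup V_0}$, while the accumulated sup-norm weight is $\lambda_1^{\norm{V_1}}D_1^{\norm{\Lambda\setminus V_1}}\lambda_0^{\norm{V_0}}D_0^{\norm{\Omega\setminus(\Lambda\cup V_0)}}$. Because $\mathcal{K}_{i,p}$ (through $R_{i,p}$) and $\mathcal{E}_p$ both produce functions that are continuous in the differentiated coordinates, the resulting integrand is continuous and piecewise $\mathcal{C}^1$ in the directions of $V_1\cup V_0$ and merely piecewise continuous elsewhere, so I may transfer all the surviving derivatives onto $\mu$ through Proposition \ref{propA4} and recognise $\var{V_1\cup V_0}\mu$. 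Summing over $(V_1,V_0)$ and taking the supremum over $\varphi$ gives the claim. The main obstacle --- and the only genuinely new point beyond Proposition \ref{prop2} --- is precisely this overlap $\Lambda\subseteq\Omega$: one cannot peel off $\charf{{E_{\epsilon}}^{\Lambda}}$ with (\ref{eq8}) without producing second derivatives, and what saves the argument is using $\mathcal{E}_p$ to simultaneously restore the continuity required by Proposition \ref{propA4} and to extract the factor $\frac{\norm{E_{\epsilon}}}{2}$, at the price of replacing $(\lambda_0,D_0)$ by the larger constants $(\lambda_1,D_1)$.
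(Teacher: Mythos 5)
Your proof follows the paper's architecture (branch decomposition over $J(\vect{i}_{\Omega})$, per-site operator splitting, trading $\charf{E_{\epsilon}}(x_p)$ for a derivative, then Proposition \ref{propA4}), and your handling of the weight $\charf{E(\Gamma,t)}$, of the sites in $\Omega\setminus\Lambda$, and of your terms $(B)$ and $(C)$ is sound. The gap is in term $(A)$, and it is precisely the point the paper isolates as the crux. The primitive you build there --- the integral of $\charf{E_{\epsilon}}\,\partial_p\mathcal{K}_{i_p,p}(\varphi\circ T)$ over $E_{\epsilon}\cap J_{i_p}=[a_{i_p},b_{i_p}]$, your ``two boundary values'' --- is, up to an additive constant, the operator $S_{i_p,p}\mathcal{K}_{i_p,p}(\varphi\circ T)$ of (\ref{eq19}). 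The assembled function $\sum_{i_p}\charf{J_{i_p}}(x_p)\,S_{i_p,p}\mathcal{K}_{i_p,p}(\varphi\circ T)$ is \emph{not} continuous in $x_p$: the inner $R_{i_p,p}$ makes $\mathcal{K}_{i_p,p}(\varphi\circ T)$ vanish at $\zeta_{i_p}$ and $\zeta_{i_p+1}$, but its clamped evaluation on $[a_{i_p},b_{i_p}]$ does not; the limit at the right edge of $J_{i_p}$ is $\mathcal{K}_{i_p,p}(\varphi\circ T)(b_{i_p},\vect{x}_{\neq p})$ minus a constant, generically nonzero, while the limit from $J_{i_p+1}$ is a different value. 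So your assertion that ``$\mathcal{K}_{i,p}$ (through $R_{i,p}$) and $\mathcal{E}_p$ both produce functions that are continuous in the differentiated coordinates'' fails exactly for $(A)$, and Proposition \ref{propA4} cannot be invoked: for a function with jumps in a differentiated coordinate, the inequality $\mu(\partial_{V_1\cup V_0}\tilde{\varphi})\leq\var{V_1\cup V_0}\mu\,\norm{\tilde{\varphi}}_{\infty}$ can fail by a factor of order $N$ (a sawtooth with $N$ teeth has sup norm of order $1/N$ and a.e.\ derivative $1$). Nor can you fix this by integrating over all of $E_{\epsilon}\cap[0,x_p]$ with the branch-correct integrand: that restores continuity but the sup norm then accumulates boundary values over up to $N$ branches, giving a bound of order $N\lambda_0$ instead of $2\lambda_0$, which destroys $\lambda_1$.

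This is exactly the obstruction the paper flags (``the operator $S_{i_p,p}$ destroys the regularization introduced by $\mathcal{K}_{i_p,p}$'') and repairs by applying $R_{i_p,p}$ from (\ref{eq10.2.1}) a second time, writing $\charf{J_{i_p}}\partial_p S_{i_p,p}\mathcal{K}_{i_p,p}\psi = \charf{J_{i_p}}\partial_p R_{i_p,p}S_{i_p,p}\mathcal{K}_{i_p,p}\psi + \charf{J_{i_p}}\Delta_{i_p,p}S_{i_p,p}\mathcal{K}_{i_p,p}\psi$: the first piece vanishes at the $\zeta_i$ (continuity restored), and the second --- constant in $x_p$ on each branch, i.e.\ precisely the interpolation of the jumps your construction leaves unaccounted for --- is the true origin of the constant $D_1$ in (\ref{eq3.2}), not your term $(B)$. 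Note also that with your normalizations the constants of the lemma would not survive the repair: your $(A)$ primitive is bounded by $2\lambda_0$ (a difference of two values) rather than by $\lambda_0$ (a single clamped evaluation, as for $S_{i_p,p}$), so applying $R_{i_p,p}$ to it costs $4\lambda_0 > \tfrac{4}{\kappa}$, and keeping $(B)$ as a bounded multiplier adds $\tfrac{D_1}{2}$ on top of the term $\Delta_{i_p,p}S_{i_p,p}\mathcal{K}_{i_p,p}\leq D_1$. To land exactly on the stated weights $\lambda_1^{\norm{V_1}}D_1^{\norm{\Lambda\setminus V_1}}$ you need the paper's split at sites of $\Lambda$: derivative part $R_{i_p,p}S_{i_p,p}\mathcal{K}_{i_p,p}+\mathcal{E}_p\mathcal{D}_{i_p,p}$ (bounded by $\lambda_1$), non-derivative part $\Delta_{i_p,p}S_{i_p,p}\mathcal{K}_{i_p,p}$ (bounded by $D_1$).
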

\begin{proof}
We start by applying the development of (\ref{eq16}) to the measure $\charf{E(\Gamma,t)}\charf{{E_{\epsilon}}^{\Lambda}}\mu$:
\begin{align}
& T \charf{E(\Gamma,t)}\charf{{E_{\epsilon}}^{\Lambda}}\mu \left(\,   \partial_{\Omega} \varphi \, \right)\nonumber\\
& \qquad = \sum_{\vect{i}_{\Omega}}
\mu\left(\, \charf{E(\Gamma,t)}\ \charf{J(\vect{i}_{\Omega})}\ \charf{{E_{\epsilon}}^{\Lambda}}  \  \biggl[\prod_{p \in \Omega } ( \partial_p \mathcal{K}_{i_p,p} + \mathcal{D}_{i_p,p})\biggr](\varphi \circ T) \,\right)\label{eq20.2}.
\end{align}
We first consider the characteristic functions of $E(\Gamma,t)$ and $J(\vect{i}_{\Omega})$. Since the partition $J_i$ is finer than the intervals $(0,1]$ or $[-1,0]$, we know that if the configuration $\vect{x}$ is fixed outside $\Omega$, $\charf{E(\Gamma,t)}\,\charf{J(\vect{i}_{\Omega})} $ is either identically $0$ or identically $1$ as a function of $\vect{x}_{\Omega}$ restricted to $J(\vect{i}_{\Omega})$. Therefore, $\sum_{\vect{i}_{\Omega}}\charf{E(\Gamma,t)}\,\charf{J(\vect{i}_{\Omega})}$ can always be rewritten as $\sum_{\vect{i}_{\Omega}}
c_{\vect{i}_{\Omega}} \charf{J(\vect{i}_{\Omega})}$ where the $c_{\vect{i}_{\Omega}}$ are some discontinuous functions depending only on the variables outside $\Omega$ and taking only values $0$ and $1$. So (\ref{eq20.2}) can be rewritten as:
\begin{align}
& T \charf{E(\Gamma,t)}\charf{{E_{\epsilon}}^{\Lambda}}\mu \left(\,   \partial_{\Omega} \varphi \, \right)\nonumber\\
& \qquad =  \sum_{\vect{i}_{\Omega}}
\mu\left(\, c_{\vect{i}_{\Omega}}\ \charf{J(\vect{i}_{\Omega})}\ \charf{{E_{\epsilon}}^{\Lambda}}  \   \biggl[\prod_{p \in \Omega } ( \partial_p \mathcal{K}_{i_p,p} + \mathcal{D}_{i_p,p})\biggr](\varphi \circ T)\,\right)\label{eq18}.
\end{align}

We now focus on the characteristic function of ${E_{\epsilon}}^{\Lambda}$. Since $\Lambda \subseteq \Omega$, we have:
\begin{align}
\sum_{\vect{i}_{\Omega}}&  c_{\vect{i}_{\Omega}}
\charf{J(\vect{i}_{\Omega})}\,\charf{{E_{\epsilon}}^{\Lambda}}  \, \biggl[\prod_{p \in \Omega } ( \partial_p \mathcal{K}_{i_p,p} + \mathcal{D}_{i_p,p})\biggr]\,(\varphi \circ T) \nonumber\\
\begin{split} & = \sum_{\vect{i}_{\Omega}} c_{\vect{i}_{\Omega}}
\charf{J(\vect{i}_{\Omega})}\,  \biggl[\prod_{p \in \Lambda } (\charf{E_{\epsilon}}(x_p) \partial_p \mathcal{K}_{i_p,p} +  \charf{E_{\epsilon}}(x_p) \mathcal{D}_{i_p,p})\biggr] \\
& \phantom{= \sum_{\vect{i}_{\Omega}}\ } \biggl[\prod_{p \in \Omega \setminus \Lambda } ( \partial_p \mathcal{K}_{i_p,p} + \mathcal{D}_{i_p,p})\biggr]\,(\varphi \circ T) \label{eq20}.\end{split}
\end{align}
But $\charf{E_{\epsilon}}(x_p) \psi$ is equal to $\partial_p \mathcal{E}_{p} \psi$, with the operator $\mathcal{E}_{p}$ introduced in (\ref{eq7.2}), and therefore we have:
\begin{align*}
\charf{E_{\epsilon}}(x_p) \mathcal{D}_{i_p,p} \psi = \partial_p \mathcal{E}_{p} \mathcal{D}_{i_p,p} \psi. 
\end{align*}
And, since $\tau$ restricted to $J_i$ is monotone, we know that $E_{\epsilon} \cap J_i$ is always an interval, let us say $[a_i,b_i]$. Therefore, for any interval $J_{i}$ and any coordinate $p \in \ent$, we can define the operator:
\begin{equation}\label{eq19}
S_{i,p} \psi = \psi(a_i,\vect{x}_{\neq p}) + \int_{a_i}^{x_p} \charf{E_{\epsilon}} \partial_p \psi ,
\end{equation}
and we immediately see that, as long as $x_p$ belongs to $J_{i}$, $\partial_p S_{i,p} \psi =  \charf{E_{\epsilon}} \partial_p \psi$ and that ${\norm{ \charf{J_i}(x_p) S_{i,p} \psi}_{\infty} \leq \norm{\psi}_{\infty}}$. Hence, if $x_p \in J_{i_p}$:
\begin{align*}
\charf{E_{\epsilon}}(x_p) \partial_p \mathcal{K}_{i_p,p} \psi = \partial_p S_{i,p}  \mathcal{K}_{i_p,p} \psi.
\end{align*}

Eventually, equation (\ref{eq20}) can be rewritten as:
\begin{align*}
\sum_{\vect{i}_{\Omega}}&  c_{\vect{i}_{\Omega}}
\charf{J(\vect{i}_{\Omega})}\,\charf{{E_{\epsilon}}^{\Lambda}}  \, \biggl[\prod_{p \in \Omega } ( \partial_p \mathcal{K}_{i_p,p} + \mathcal{D}_{i_p,p})\biggr]\,(\varphi \circ T) \nonumber\\
\begin{split} & = \sum_{\vect{i}_{\Omega}} c_{\vect{i}_{\Omega}}
\charf{J(\vect{i}_{\Omega})}\,  \biggl[\prod_{p \in \Lambda }   (\partial_p S_{i_p,p} \mathcal{K}_{i_p,p} + \partial_p \mathcal{E}_{p} \mathcal{D}_{i_p,p})\biggr] \biggl[\prod_{p \in \Omega \setminus \Lambda } ( \partial_p \mathcal{K}_{i_p,p} + \mathcal{D}_{i_p,p})\biggr]\,(\varphi \circ T).\end{split}
\end{align*}
Here, we would like to apply directly Proposition \ref{propA4}, but this is impossible because the operator $S_{i_p,p}$ destroys the regularization introduced by $\mathcal{K}_{i_p,p}$. Indeed, if $\psi$ is continuously differentiable on the intervals $J_i$, the function $\sum_{i_p} \charf{J_i}(x_p) \mathcal{K}_{i_p,p}\psi(\vect{x})$ is not only continuously differentiable on the intervals $J_i$ but also continuous with respect to $x_p$, because $\mathcal{K}_{i_p,p}\psi(\vect{x})$ vanishes at the boundaries of the intervals $J_i$. However, this is no longer true for $\sum_{i_p} \charf{J_i}(x_p) S_{i_p,p} \mathcal{K}_{i_p,p}\psi(\vect{x})$, and we need to apply once again the operator $R_{i_p,p}$ from (\ref{eq10.2.1}) in order to regularize the discontinuities of the function. From  (\ref{eq10.3}), we have
 \begin{equation*}
\charf{J_{i_p}}\partial_p S_{i_p,p} \mathcal{K}_{i_p,p} \psi = \charf{J_{i_p}}\partial_p R_{i_p,p} S_{i_p,p} \mathcal{K}_{i_p,p} \psi + \charf{J_{i_p}}\Delta_{i_p,p} S_{i_p,p} \mathcal{K}_{i_p,p} \psi,  
 \end{equation*}
and this implies that
\begin{align}
\sum_{\vect{i}_{\Omega}}&  c_{\vect{i}_{\Omega}}
\charf{J(\vect{i}_{\Omega})}\,\charf{{E_{\epsilon}}^{\Lambda}}  \, \biggl[\prod_{p \in \Omega } ( \partial_p \mathcal{K}_{i_p,p} + \mathcal{D}_{i_p,p})\biggr]\,(\varphi \circ T) \nonumber\\
\begin{split} & = \sum_{\vect{i}_{\Omega}} c_{\vect{i}_{\Omega}}
\charf{J(\vect{i}_{\Omega})}\,  \biggl[\prod_{p \in \Lambda } ( \partial_p ( R_{i_p,p} S_{i_p,p} \mathcal{K}_{i_p,p} +  \mathcal{E}_{p} \mathcal{D}_{i_p,p}) +  \Delta_{i_p,p} S_{i_p,p} \mathcal{K}_{i_p,p})\biggr] \nonumber\\
& \phantom{= \sum_{\vect{i}_{\Omega}}\ } \biggl[\prod_{p \in \Omega \setminus \Lambda } \left( \partial_p \mathcal{K}_{i_p,p} + \mathcal{D}_{i_p,p}\right)\biggr]\,(\varphi \circ T)\end{split}\\
\begin{split}& = \sum_{V_1 \subseteq \Lambda} \;\sum_{V_0 \subseteq \Omega \setminus \Lambda}\; \sum_{\vect{i}_{\Omega}} c_{\vect{i}_{\Omega}}
\charf{J(\vect{i}_{\Omega})}\;  \biggl[\prod_{p \in V_1 } \partial_p (R_{i_p,p} S_{i_p,p} \mathcal{K}_{i_p,p} + \mathcal{E}_{p} \mathcal{D}_{i_p,p})\biggr] \\
& \phantom{= \sum_{\vect{i}_{\Omega}}\ }  \biggl[\prod_{p \in \Lambda \setminus V_1 } \Delta_{i_p,p} S_{i_p,p} \mathcal{K}_{i_p,p}\biggr] \biggl[\prod_{p \in V_0} \partial_p \mathcal{K}_{i_p,p}\biggr] \biggl[\prod_{ p \in \Omega \setminus (\Lambda \cup V_0) } \mathcal{D}_{i_p,p}\biggr] \, (\varphi \circ T) \nonumber\end{split}\\
& = \sum_{V_1 \subseteq \Lambda} \;\sum_{V_0 \subseteq \Omega \setminus \Lambda}\; \partial_{\raisebox{-3pt}{${\scriptstyle V_0 \cup V_1}$}}{\tilde{\varphi}}_{\raisebox{-3pt}{${\scriptstyle V_0 \cup V_1}$}},\label{eq20.1}
\end{align}
where, for $V_1$ and $V_0$ fixed, ${\tilde{\varphi}}_{V_0,V_1}$ is defined as:
\begin{equation}
\begin{split}{\tilde{\varphi}}_{V_0,V_1}&  = \sum_{\vect{i}_{\Omega}} c_{\vect{i}_{\Omega}}
\charf{J(\vect{i}_{\Omega})}\;  \biggl[\prod_{p \in V_1 }  (R_{i_p,p} S_{i_p,p} \mathcal{K}_{i_p,p} + \mathcal{E}_{p} \mathcal{D}_{i_p,p})\biggr] \\
& \phantom{= \sum_{\vect{i}_{\Omega}}\ }  \biggl[\prod_{p \in \Lambda \setminus V_1 } \Delta_{i_p,p} S_{i_p,p} \mathcal{K}_{i_p,p}\biggr] \biggl[\prod_{p \in V_0} \mathcal{K}_{i_p,p}\biggr] \biggl[\prod_{ p \in \Omega \setminus (\Lambda \cup V_0) } \mathcal{D}_{i_p,p}\biggr] \, (\varphi \circ T).  \label{eq20.3}\end{split}
\end{equation}

We can now conclude: if we insert (\ref{eq20.1}) into (\ref{eq18}), we find:
\begin{align*}
T \charf{E(\Gamma,t)}\charf{{E_{\epsilon}}^{\Lambda}}\mu \left(\,   \partial_{\Omega} \varphi \, \right) = \sum_{V_1 \subseteq \Lambda} \;\sum_{V_0 \subseteq \Omega \setminus \Lambda}\;\mu\left(\,\partial_{V_0 \cup V_1} \tilde{\varphi}_{V_0,V_1} \,\right) .
\end{align*}
But since $\tilde{\varphi}_{V_0,V_1}$ is continuous and piecewise continuously differentiable with respect to $x_{V_1 \cup V_0 }$, and piecewise continuous with respect to the other variables, we can apply Proposition \ref{propA4} and we get:
\begin{align}
T \charf{E(\Gamma,t)}\charf{{E_{\epsilon}}^{\Lambda}}\mu \left(\,   \partial_{\Omega} \varphi \, \right)\leq \sum_{V_1 \subseteq \Lambda } \; \sum_{V_0 \subseteq \Omega\setminus \Lambda } \;\var{V_1 \cup V_0}\mu\  \norm{\tilde{\varphi}_{V_0,V_1}}_{\infty}\label{eq20.4} .
\end{align}
Using $\norm{R_{i_p,p}\psi}_{\infty} \leq 2 \norm{\psi}_{\infty}$, $\norm{S_{i_p,p}\psi}_{\infty} \leq \norm{\psi}_{\infty}$, the bounds on $\mathcal{K}_{i_p,p}$ from (\ref{eq17}), on $\mathcal{D}_{i_p,p}$ from (\ref{eq17.0}) and on $\mathcal{E}_{p}$ from (\ref{eq7.3}), altogether with the definition of $\lambda_1$ and $D_1$ from (\ref{eq3.2}) yields
\begin{align*}
\norm{(R_{i_p,p} S_{i_p,p} \mathcal{K}_{i_p,p} +  \mathcal{E}_{p} \mathcal{D}_{i_p,p})\psi}_{\infty} & \leq \left(2 \,\frac{2}{\kappa} + \frac{\norm{E_{\epsilon}}}{2}\, D_0\right) \norm{\psi}_{\infty} \leq \lambda_1 \norm{\psi}_{\infty}\\
\norm{\Delta_{i_p,p} \mathcal{K}_{i_p,p}\psi}_{\infty} & \leq \frac{2}{\min_i \norm{J_i}} \,\frac{2}{\kappa} \leq D_1 \norm{\psi}_{\infty} .
\end{align*}
Therefore, $\norm{\tilde{\varphi}_{V_0,V_1}}_{\infty}$ is bounded by:
\begin{equation*}
 \norm{\tilde{\varphi}_{V_0,V_1}}_{\infty} \leq \lambda_1^{\norm{V_1}} D_1^{\norm{\Lambda \setminus V_1}}\,\lambda_0^{\norm{V_0}} \, D_0^{\norm{\Omega \setminus (\Lambda \cup V_0)}} \ \norm{\varphi}_{\infty} .
\end{equation*}
And so, (\ref{eq20.4}) becomes:
\begin{align*}
& T \charf{E(\Gamma,t)}\charf{{E_{\epsilon}}^{\Lambda}}\mu \left(\,   \partial_{\Omega} \varphi \, \right)\nonumber\\
& \qquad \leq \norm{\varphi}_{\infty}\,\sum_{V_1 \subseteq \Lambda } \; \sum_{V_0 \subseteq \Omega\setminus \Lambda } \; \lambda_1^{\norm{V_1}} D_1^{\norm{\Lambda \setminus V_1}}\,\lambda_0^{\norm{V_0}} \, D_0^{\norm{\Omega \setminus (\Lambda \cup V_0)}}\,\var{V_1 \cup V_0}\mu.
\end{align*}
\end{proof}

We have now all the tools to complete the Peierls argument.
\begin{theorem}\label{prop7}Assume that $\tau$ belongs to $\mathcal{T}(D_0,c,\varsigma)$ and that $\kappa > 108$. Then, there is some $\epsilon_0 > 0$ such that, if $\epsilon < \epsilon_0$, for any measure $\mu$ in $\mathcal{B}(K,\alpha,\theta_0)$ with $K < \infty$, $\alpha < \frac{1}{27}$ and $\theta_0 = \frac{2 \alpha_0}{\norm{E_{\epsilon}}}$, we have:
\begin{equation*}
\tnorm{\,\charf{(0,1]^{\Lambda}}\,T^n \mu\,}{\theta_0} \leq \frac{K}{2 (1 - 9 \alpha')(1 -\alpha')}\,{\alpha'}^{\norm{\Lambda}},
\end{equation*}
with $\alpha' = 3\,\max \{ \alpha_0, \alpha\} < \frac{1}{9}$ and $\alpha_0$ defined in (\ref{eq3.3}).
\end{theorem}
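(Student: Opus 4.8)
The plan is to turn the cluster expansion \eqref{eq6} into a convergent Peierls series. Since each $\var{\Omega}$ is a seminorm, applying the triangle inequality termwise in \eqref{eq6} gives
\[
\tnorm{\,\charf{(0,1]^{\Lambda}}\,T^n\mu\,}{\theta_0}\ \le\ \sum_{\Gamma\in\mathcal{G}(\Lambda)} \tnorm{\,\charf{(0,1]^{\Lambda}}\Bigl[\,\textstyle\prod_{t=0}^{n-1}\bigl(T\,\charf{E(\Gamma,t)}\bigr)\,\Bigr]\mu\,}{\theta_0},
\]
so everything splits into an \emph{energetic} estimate, bounding the $\theta_0$-norm of a single cluster's contribution by $K$ times a fixed factor $<1$ raised to the number of boundary sites of $\Gamma$, and an \emph{entropic} estimate, bounding the number of clusters of a given boundary size; the result then follows by summing a geometric series. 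The engine of the energetic estimate is a purely dynamical observation about the explicit coupling $\Phi_{\epsilon}$: if $p\in\partial\Gamma_{t+1}$, then by the construction of $\Gamma$ one has $(p,t+1)\in\Gamma$ while $(p,t)\notin\Gamma$ or $(p+1,t)\notin\Gamma$, so $T^{t+1}x_p>0$ although $T^t x_p\le 0$ or $T^t x_{p+1}\le 0$. Reading off the three branches of $\Phi_{\epsilon}$ shows that $T^{t+1}x_p>0$ then forces $\tau(T^t x_p)\in(-\epsilon,0]\cup(1-\epsilon,1]$, i.e. $T^t x_p\in E_{\epsilon}$. Hence, on the support of $\charf{g^{-1}\Gamma}$, the factor $\charf{E_{\epsilon}^{\partial\Gamma_{t+1}}}(T^t\vect{x})$ equals $1$, and I may insert it alongside each $\charf{E(\Gamma,t)}$ in \eqref{eq6} without changing the measure. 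This is exactly the configuration for which Lemma \ref{prop6} was built.

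With these factors in place I would estimate each cluster contribution by peeling the layers $t=n-1,\dots,0$ one at a time in the $\theta_0$-norm. The output positivity $\charf{(0,1]^{\Lambda}}$ is first absorbed into the variation set $\Omega$ by a cost-free version of \eqref{eq8} (with $(0,1]$, of length $1$, in place of $E_{\epsilon}$); thereafter each layer is handled by Lemma \ref{prop6} with $\Lambda=\partial\Gamma_{t+1}$, any boundary site lying outside the current $\Omega$ being routed in first through \eqref{eq8}. Mimicking the computation of Corollary \ref{prop4}, dividing the bound of Lemma \ref{prop6} by $\theta_0^{\norm{\Omega}}$ telescopes so that every interior site of a layer carries a factor $\lambda_0+D_0/\theta_0$ and every boundary site carries a factor $\lambda_1+D_1/\theta_0$. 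The definition \eqref{eq3.3} of $\alpha_0$ is tailored precisely to these two quantities: its first branch makes $\alpha_0$ the larger root of $\alpha^2-\lambda_1\alpha-\tfrac12 D_1\norm{E_{\epsilon}}=0$, whence $\lambda_1+D_1/\theta_0\le\alpha_0$ because $\theta_0=2\alpha_0/\norm{E_{\epsilon}}$; its second branch gives $D_0/\theta_0\le 1-\lambda_0$, whence $\lambda_0+D_0/\theta_0\le 1$. Thus interior sites cost nothing and each boundary site costs a factor $\alpha_0$, while at the bottom layer the leftover positivity $\charf{(0,1]^{\Gamma_0}}$ is controlled by the hypothesis $\mu\in\mathcal{B}(K,\alpha,\theta_0)$, contributing $K\alpha^{\norm{\Gamma_0}}$. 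Since $\Gamma_0=\partial\Gamma_0$, a single cluster therefore contributes at most $K\,(\max\{\alpha_0,\alpha\})^{\norm{\partial\Gamma}}$.

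For the entropic count I would use the outer-path encoding and the inequalities \eqref{eq7}: a cluster is determined by its outer paths, which make only the three jumps $(+1,-1)$, $(-1,0)$, $(0,+1)$, so the number of clusters with prescribed jump numbers is at most a multinomial coefficient, bounded by $3$ to the total number of jumps; \eqref{eq7} then rewrites this as a bound in terms of $\norm{\partial\Gamma}$, $\norm{\Lambda}$ and the number $c$ of connected components. Feeding the per-cluster weight $(\max\{\alpha_0,\alpha\})^{\norm{\partial\Gamma}}$ into this count produces two independent geometric series, one of ratio $9\alpha'$ over the excess boundary length and one of ratio $\alpha'$ over $c$, with $\alpha'=3\max\{\alpha_0,\alpha\}$ absorbing the entropic factor; these sum to $\tfrac{1}{(1-9\alpha')(1-\alpha')}$ and leave the announced prefactor $\tfrac{K}{2(1-9\alpha')(1-\alpha')}$ together with the leading power $\alpha'^{\norm{\Lambda}}$. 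Convergence requires $9\alpha'<1$, i.e. $\alpha'<\tfrac19$, i.e. $\max\{\alpha_0,\alpha\}<\tfrac{1}{27}$; this holds for $\epsilon$ small by the hypothesis $\kappa>108$, since \eqref{eq21.-1} gives $\alpha_0\to 4/\kappa<\tfrac{1}{27}$, together with the standing assumption $\alpha<\tfrac{1}{27}$.

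The main obstacle is the energetic iteration described above. One must propagate the changing variation sets cleanly through the $n$ layers, verify at each layer that the hypotheses of Lemma \ref{prop6} are met (in particular that $\partial\Gamma_{t+1}$ sits inside the current $\Omega$, after absorbing stray sites via \eqref{eq8}), and check that the telescoping of the $\theta_0$-weights reproduces exactly the two factors $\lambda_1+D_1/\theta_0\le\alpha_0$ and $\lambda_0+D_0/\theta_0\le 1$, so that interior sites are genuinely free and the whole decay is concentrated on $\partial\Gamma$. Once this per-cluster weight is established, the combinatorial count and the final summation are routine.
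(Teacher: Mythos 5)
Your proposal is correct and follows essentially the same route as the paper's own proof: the same cluster expansion, the same dynamical observation forcing $\charf{{E_{\epsilon}}^{\partial\Gamma_{t+1}}}$ to be insertable at each layer, the same peeling of layers through Lemma \ref{prop6} and inequality (\ref{eq8}) with the two branches of (\ref{eq3.3}) yielding the factors $\alpha_0$ and $1$, the same use of the hypothesis $\mu \in \mathcal{B}(K,\alpha,\theta_0)$ at the bottom layer (where indeed $\Gamma_0 = \partial\Gamma_0$), and the same outer-path count and geometric series. The only cosmetic discrepancies are that the paper discards $\charf{(0,1]^{\Lambda}}$ via Corollary \ref{propA5} at cost $1$ without enlarging the variation set (literally absorbing $\Lambda$ into $\Omega$ would cost $\theta_0^{\norm{\Lambda}}$ in the $\theta_0$-norm), and that the series of ratio $\alpha'$ runs over the excess horizontal length $k$ while the sum over the number of connected components $c$ (ratio $\tfrac{1}{3}$) is what supplies the prefactor $\tfrac{1}{2}$.
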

\begin{proof}
We start with the contour expansion of (\ref{eq6}):
\begin{align}
\tnorm{\,\charf{(0,1]^{\Lambda}}\,T^n \mu\,}{\theta_0}  & \leq  \sum_{\Gamma \in \mathcal{G}(\Lambda)}\,\Big\vvvert\,\charf{(0,1]^{\Lambda}}\,\Bigl[\prod_{t = 0}^{n-1} \left(\, T \charf{E(\Gamma,t)}\,\right)\Bigr]  \mu \,\Big\vvvert_{\theta_0}\label{eq22.-1}.
\end{align}
However, for some fixed cluster $\Gamma$, we know by the definition of $E(\Gamma,t)$ in (\ref{eq5}) that if $\vect{x}$ belongs to $E(\Gamma,t)$ and $T \vect{x}$ belongs to $E(\Gamma,t+1)$, $\vect{x}$ has to be in $E_{\epsilon}^{\partial \Gamma_{t+1}}$ with  $\partial \Gamma_{t+1} = \{\, q\ |\ (q,t+1) \in \partial \Gamma\, \}$. So:
\begin{equation*}
\charf{E(\Gamma,t+1)}(T\vect{x})\ \charf{E(\Gamma,t)}(\vect{x}) = \charf{E(\Gamma,t+1)}(T\vect{x})\ \charf{E(\Gamma,t)}(\vect{x})\ \charf{{E_{\epsilon}}^{\partial \Gamma_{t+1}}}(\vect{x}).
\end{equation*}
Therefore, we can insert the characteristic functions of ${E_{\epsilon}}^{\partial \Gamma_{t+1}}$ at every time $t$ in each term of the sum in (\ref{eq22.-1}), and we get:
\begin{align}
\tnorm{\,\charf{(0,1]^{\Lambda}}\,T^n \mu\,}{\theta_0}  & \leq  \sum_{\Gamma \in \mathcal{G}(\Lambda)}\,\Big\vvvert\,\charf{(0,1]^{\Lambda}}\,\Bigl[\prod_{t = 0}^{n-1} \left(\, T \charf{E_{\epsilon}^{\partial \Gamma_{t+1}}}\charf{E(\Gamma,t)}\,\right)\Bigr]  \mu \,\Big\vvvert_{\theta_0}\label{eq22}.
\end{align}
But, for any measure $\nu$ and any finite subset $\Omega$, if we first apply Lemma \ref{prop6} and then use inequality (\ref{eq8}), we have:
{\allowdisplaybreaks
\begin{align}
\theta_0^{- \norm{\Omega}}\,& \var{\Omega}\left( T \charf{E_{\epsilon}^{\partial \Gamma_{t}}}\charf{E(\Gamma,t-1)} \nu\right) \nonumber\\
\begin{split}&\leq \theta_0^{- \norm{\Omega}}\,\sum_{V_1 \subseteq \Omega \cap \partial \Gamma_{t} }\; \sum_{V_0 \subseteq \Omega\setminus \partial \Gamma_{t} }\; \lambda_1^{\norm{V_1}}\, D_1^{\norm{(\Omega \cap \partial \Gamma_{t}) \setminus V_1}}\,\lambda_0^{\norm{V_0}}\,D_0^{\norm{\Omega \setminus (\partial \Gamma_{t} \cup V_0)}}\nonumber\\
&\phantom{\leq \theta_0^{- \norm{\Omega}}\,\sum_{V_1 \subseteq \Omega \cap \partial \Gamma_{t} }\; \sum_{V_0 \subseteq \Omega\setminus \partial \Gamma_{t}}\;}\var{V_1 \cup V_0} (\charf{E_{\epsilon}^{\partial \Gamma_{t} \setminus \Omega}}\nu)\end{split}\\
\begin{split} &\leq \theta_0^{- \norm{\Omega}}\,\sum_{V_1 \subseteq \Omega \cap \partial \Gamma_{t} }\; \sum_{V_0 \subseteq \Omega\setminus \partial \Gamma_{t} } \;  \lambda_1^{\norm{V_1}}\, D_1^{\norm{(\Omega \cap \partial \Gamma_{t}) \setminus V_1}}\,\lambda_0^{\norm{V_0}}\, D_0^{\norm{\Omega \setminus (\partial \Gamma_{t}\cup V_0)}}\nonumber\\
& \phantom{\leq \theta_0^{- \norm{\Omega}}\,\sum_{V_1 \subseteq \Omega \cap \partial \Gamma_{t}}\; \sum_{V_0 \subseteq \Omega\setminus \partial \Gamma_{t} }\;}\left(\frac{\norm{E_{\epsilon}}}{2}\right)^{\norm{\partial \Gamma_{t} \setminus \Omega}}\,\var{V_1 \cup V_0 \cup (\partial \Gamma_{t} \setminus \Omega)} (\nu) \end{split}\nonumber\\
\begin{split} & \leq \theta_0^{- \norm{\Omega}}\,\sum_{V_1 \subseteq \Omega \cap \partial \Gamma_{t} } \sum_{V_0 \subseteq \Omega\setminus \partial \Gamma_{t} }  \left(\lambda_1 \theta_0\right)^{\norm{V_1}} D_1^{\norm{(\Omega \cap \partial \Gamma_{t}) \setminus V_1}} \left(\lambda_0 \theta_0\right)^{\norm{V_0}} D_0^{\norm{\Omega \setminus (\partial \Gamma_{t} \cup V_0)}}\nonumber\\
& \phantom{\leq \theta_0^{- \norm{\Omega}}\,\sum_{V_1 \subseteq \Omega \cap \partial \Gamma_{t} }\; \sum_{V_0 \subseteq \Omega\setminus \partial \Gamma_{t} }\;}\left(\frac{\theta_0\norm{E_{\epsilon}} }{2}\right)^{\norm{\partial \Gamma_{t} \setminus \Omega}}\,\tnorm{\nu}{\theta_0}\end{split}\nonumber\\
& \leq \left(\lambda_1 + \frac{D_1 }{\theta_0}\right)^{\norm{\Omega \cap \partial \Gamma_{t}} } \left(\lambda_0 + \frac{D_0}{\theta_0} \right)^{\norm{\Omega\setminus \partial \Gamma_{t}} }\,\left(\frac{\theta_0 \norm{E_{\epsilon}}}{2}\right)^{\norm{\partial \Gamma_{t} \setminus \Omega}}\,\tnorm{\nu}{\theta_0}\label{eq23}.
\end{align}}
By definition of $\theta_0$, $\frac{\theta_0 \norm{E_{\epsilon}}}{2} \leq \alpha_0$, and we can check that:
\begin{align*}
\lambda_1 + \frac{D_1 }{\theta_0} \leq \alpha_0 &\iff \alpha_0 \geq \frac{1}{2} \left( \lambda_1 + \sqrt{\lambda_1^2 + 2 D_1 \norm{E_{\epsilon}}}\right)\\
\lambda_0 + \frac{D_0 }{\theta_0} \leq 1 & \iff \alpha_0 \geq \frac{1}{2}\left(\frac{D_0 \norm{E_{\epsilon}}}{1 - \lambda_0}\right)
\end{align*}
So, the definition of $\alpha_0$ in (\ref{eq3.3}) implies that, if we take the supremum over all finite $\Omega$ in (\ref{eq23}), we get:
\begin{equation}\label{eq24}
\tnorm{ T \charf{E_{\epsilon}^{\Lambda_{t}}}\charf{E(\Gamma,t-1)} \nu}{\theta_0} \leq {\alpha_0}^{\norm{\partial \Gamma_{t}}} \tnorm{\nu}{\theta_0}.
\end{equation}

We now go back to equation (\ref{eq22}). We apply Corollary \ref{propA5}, inequality (\ref{eq24}), use the assumption that $\mu$ belongs to $\mathcal{B}(K,\alpha,\theta_0)$, define $\alpha' = 3 \max\{\alpha_0,  \alpha\}$, recall the definition of $\partial \Gamma$, and we find:

\begin{align}
\tnorm{\,\charf{(0,1]^{\Lambda}}\,T^n \mu\,}{\theta_0} & \leq \sum_{\Gamma \in \mathcal{G}(\Lambda)}\, \Big\vvvert\,\,\charf{(0,1]^{\Lambda}}\,\Bigl[\prod_{t = 0}^{n-1} \left(\, T \charf{E(\Gamma,t)}\,\right)\Bigr]  \mu \,\Big\vvvert_{\theta_0}\nonumber\\
&\leq \sum_{\Gamma \in \mathcal{G}(\Lambda)}\,\Big\vvvert\,\prod_{t = 0}^{n-1} \left(\, T \charf{E_{\epsilon}^{\partial \Gamma_{t+1}}}\charf{E(\Gamma,t)}\,\right)  \charf{(0,1]^{\Lambda_0}}  \mu \,\Big\vvvert_{\theta_0}\nonumber\\
&\leq \sum_{\Gamma \in \mathcal{G}(\Lambda)}\,{\alpha_0}^{\sum_{t=1}^{n} \norm{\partial \Gamma_{t}}}\, \tnorm{ \charf{(0,1]^{\partial \Gamma_0}}  \mu\,}{\theta_0} \label{eq25.-1}\\
&\leq \sum_{\Gamma \in \mathcal{G}(\Lambda)}\,K {\alpha_0}^{\sum_{t=1}^{n} \norm{\partial \Gamma_{t}}} {\alpha}^{\norm{\partial \Gamma_0}}\nonumber\\
&\leq  \sum_{\Gamma \in \mathcal{G}(\Lambda)}\,K {\left(\frac{\alpha'}{3}\right)}^{\norm{\partial \Gamma}}\label{eq26}.
\end{align}

We can now count the number of clusters. A cluster $\Gamma$ is univoquely determined by its outer path and there are at most $3^{n_d + n_v + n_h}$ outer paths with $n_d$, $n_v$ and $n_h$ edges in the diagonal, vertical and horizontal directions respectively. We have seen in (\ref{eq7}) that $n_d = n_v$ and that there is some $k \geq 0$ such that $n_h = n_d + \norm{\Lambda} - c + k$ and $\norm{\partial \Gamma} \geq n_h + c = n_d + \norm{\Lambda} + k$. Therefore, (\ref{eq26}) becomes:
\begin{equation}\label{eq26.1}
\tnorm{\charf{(0,1]^{\Lambda}}\,T^n \mu}{\theta_0}  \leq K\,\sum_{n_d = 0}^{\infty}\,\sum_{k = 0}^{\infty}\,\sum_{c = 1}^{\infty} \,3^{3 n_d + \norm{\Lambda} -c + k }\, {\left(\frac{\alpha'}{3}\right)}^{n_d + \norm{\Lambda} +k}.
\end{equation}
Now, we have seen in (\ref{eq22.-1}) that $\lim_{\epsilon \to 0} \alpha_0 = \frac{4}{\kappa}$. If we assume that $\kappa > 108$, there is some $\epsilon_0$ such that $\epsilon < \epsilon_0$ implies $\alpha_0 < \frac{1}{27}$. We assume then that $\kappa > 108$ and $\epsilon < \epsilon_0$. Since we also assumed that $\alpha < \frac{1}{27}$, we have $\alpha' < \frac{1}{9}$ and the geometric series  in (\ref{eq26.1}) converge. This yields:
\begin{equation*}
\tnorm{\charf{(0,1]^{\Lambda}}\,T^n \mu}{\theta_0}  \leq \frac{K}{2(1 - 9 \alpha')(1- \alpha')} \,{\alpha'}^{\norm{\Lambda}}.
\end{equation*}
\end{proof}

Now, Theorem \ref{result1} is a straightforward consequence of Theorem \ref{prop7}.

\begin{proof}[Theorem \ref{result1}] 

Let us start by proving that $\muinv{-}$ belongs to $\mathcal{B}(K_0,3 \alpha_0,\theta_0)$. Since $\var{\Lambda} \sleb{-} \leq 2^{\norm{\Lambda}}$, we know that the measure $\sleb{-}$ belongs to $\mathcal{B}(1,0,2)$. But, by definition of $\theta_0$, we have:
\begin{equation}
\theta_0 =  \frac{2 \alpha_0}{\norm{E_{\epsilon}}} \geq \frac{D_0}{1 - \lambda_0} \geq 2 \label{eq27}.
\end{equation}
Therefore, $\sleb{-}$ also belongs to $\mathcal{B}(1,0,\theta_0)$. We can now apply Theorem \ref{prop7}. If $\kappa > 108$, there is some $\epsilon_0$ such that, if $\epsilon < \epsilon_0$,  $T^t \sleb{-}$ belongs to $\mathcal{B}(K_0,3 \alpha_0,\theta_0)$ for any $t$ and $3 \alpha_0 < \frac{1}{9}$. Therefore, $\muinv{-}$ too belongs to $\mathcal{B}(K_0, 3 \alpha_0,\theta_0)$. Since $\muinv{+}$ does not belong to any $\mathcal{B}(K,\alpha,\theta)$ as long as $\alpha < 1$, this also proves that $\muinv{+} \neq \muinv{-}$.\end{proof}

\section{Exponential convergence to equilibrium}
\label{expconv}

In the previous section, we defined the measure $\muinv{-}$ as a converging subsequence of $\frac{1}{n} \sum_{t=0}^{n-1} T^t\sleb{-}$. However, it was actually unnecessary to take the limit in the sense of Cesaro and to restrict ourselves to some subsequence, because, as we will see in this section, $\sleb{-}$ and many other initial probability measures converge exponentially fast to $\muinv{-}$. 

Let us start by choosing some arbitrary positive integer $\gamma$ and considering the well-ordering $\prec$, defined by:
\begin{equation*}
0 \prec -1 \prec -2 \prec \ldots \prec -\gamma \prec +1 \prec -\gamma-1 \prec +2 \prec -\gamma -2 \ldots 
\end{equation*}
With this ordering, we see that all the sites influenced by $0$ after $\gamma$ iterations of the dynamic are the $\gamma + 1$ first sites. Let $\prec_q$ be the translation of this well-ordering at any site $q$ of $\ent$. Then, for any $q$ and $p$ in $\ent$, we define the operator $\proj{qp}$:
\begin{equation}\label{eq28}
\proj{qp}\mu (\varphi) \equiv \mu (\proj{qp}\varphi) = \mu \left(\int \varphi \prod_{s \prec_q p }  \hinv{-}(x_s)\ \mathrm{d}x_s - \int \varphi \prod_{s \preceq_q p } \hinv{-}(x_s)\ \mathrm{d}x_s \right). 
\end{equation}
where $\hinv{-}$ was defined as the invariant measure of the local map $\tau$ concentrated on $[-1,0]$. Note that as long as $\varphi$ does not depend on $x_p$, $\proj{qp}\varphi$ is identically zero. Furthermore, for any $\varphi$ depending only on the variables in $\Lambda$, an arbitrary finite subset of $\ent$, and for any signed measure of zero mass $\mu$, we have, for any $q \in \ent$:
\begin{align*}
\sum_{p \in \ent} \proj{qp} \mu(\varphi) & = \sum_{p \in \ent} \mu \left(\int \varphi \prod_{s \prec_q p }  \hinv{-}(x_s)\ \mathrm{d}x_s - \int \varphi \prod_{s \preceq_q p } \hinv{-}(x_s)\ \mathrm{d}x_s \right)\\
& = \mu(\varphi) - \mu(1)\ \int \varphi \prod_{s \in \Lambda } \hinv{-}(x_s)\ \mathrm{d}x_s = \mu(\varphi).
\end{align*}
Since this is true for any continuous function, this implies that any signed measure of zero mass $\mu$ can be decomposed as:
\begin{equation}\label{eq29}
\mu = \sum_{p \in \ent} \proj{qp} \mu .
\end{equation}

We can also see that the operator $\proj{qp}$ is bounded both in total variation norm, bounded variation norm and $\theta$-norm, as stated in the next lemma:
\begin{lemma}\label{prop8}
For any measure $\mu$ in $\mathcal{B}(X)$, any $q,p \in \ent$ and any $\theta \geq \frac{D_0}{1 - \lambda_0}$, we have :
\begin{equation*}
\begin{cases}&\norm{\proj{qp}\mu} \leq 2\ \norm{\mu}\\
&\dnorm{\proj{qp}\mu} \leq 2\  \left(\tfrac{D_0}{1- \lambda_0}\right) \ \dnorm{\mu}\\
&\tnorm{\proj{qp}\mu}{\theta} \leq 2 \tnorm{\mu}{\theta}\end{cases}
\end{equation*}
\end{lemma}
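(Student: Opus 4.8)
The plan is to prove the three bounds by estimating the norm of $\proj{qp}\mu$ directly from its definition in (\ref{eq28}). The key observation is that $\proj{qp}$ can be written as a difference of two integration operators against products of the invariant density $\hinv{-}$. Concretely, for a fixed ordering $\prec_q$, let me write $A_{\prec_q p}\varphi = \int \varphi \prod_{s \prec_q p} \hinv{-}(x_s)\, \mathrm{d}x_s$ and $A_{\preceq_q p}\varphi = \int \varphi \prod_{s \preceq_q p} \hinv{-}(x_s)\, \mathrm{d}x_s$, so that $\proj{qp}\varphi = A_{\prec_q p}\varphi - A_{\preceq_q p}\varphi$. Each of these operators integrates out a collection of variables against the fixed probability density $\hinv{-}$, which has total mass $1$ and bounded variation. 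The strategy is to show that each integration operator is an $L^1$-contraction (because $\hinv{-}$ is a probability density) while controlling how it acts on $\var{\Omega}$, and then use the triangle inequality to pick up the factor of $2$.

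First I would establish the total variation bound. Since $\hinv{-}$ integrates to $1$ and is nonnegative, each operator $A_{\prec_q p}$ and $A_{\preceq_q p}$ maps $\varphi$ with $\norm{\varphi}_\infty \le 1$ to a function still bounded by $1$ in sup-norm; hence by the definition (\ref{eq1}) of the total variation norm, $\norm{A_{\prec_q p}\mu} \le \norm{\mu}$ and similarly for the other operator. The triangle inequality then gives $\norm{\proj{qp}\mu} \le \norm{A_{\prec_q p}\mu} + \norm{A_{\preceq_q p}\mu} \le 2\norm{\mu}$. For the variation bounds, the main point is to see how differentiating $\proj{qp}\varphi$ interacts with the integrations: when I apply $\partial_\Omega$ to $\proj{qp}\varphi$, the derivatives with respect to the variables that are \emph{integrated out} get transferred onto the densities $\hinv{-}$, which is exactly where the factor $\tfrac{D_0}{1-\lambda_0}$ (controlling $\dnorm{\hinv{-}}_{BV}$ via the Lasota--Yorke bound (\ref{eq0.1})) and the factor $\theta$ (controlling the growth of $\var{\cdot}$ on the decoupled density) enter.

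For the $\theta$-norm bound, the cleanest route is to reduce to the variation estimate (\ref{eq8})-style manipulation: since $A_{\prec_q p}$ only involves a product of one-dimensional densities $\hinv{-}$, and since $\hinv{-}$ is an invariant density, integrating against it can only decrease $\var{\Omega}$ up to the factor coming from $\theta \ge \frac{D_0}{1-\lambda_0} \ge \dnorm{\hinv{-}}_{BV}$. Specifically, integrating out a variable $s$ in $\Omega$ against $\hinv{-}$ replaces a derivative $\partial_s$ with a variation factor bounded by $\dnorm{\hinv{-}}_{BV} \le \theta$, which is precisely compensated by the $\theta^{-\norm{\Omega}}$ in the definition (\ref{eq9}) of the $\theta$-norm. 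Carrying this through for both $A_{\prec_q p}$ and $A_{\preceq_q p}$ gives $\tnorm{A_{\prec_q p}\mu}{\theta} \le \tnorm{\mu}{\theta}$ for each, and the triangle inequality yields the factor $2$.

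The main obstacle I anticipate is the bookkeeping in the $\theta$-norm case: one must carefully separate the variables in $\Omega$ into those that are integrated out by the operator (where derivatives are absorbed by $\hinv{-}$ at the cost of a factor $\le \theta$ per variable) and those that survive (where the derivative passes through to $\mu$), and then check that the per-variable factors exactly match the normalizations $\theta^{-\norm{\Omega}}$ so that nothing worse than $\tnorm{\mu}{\theta}$ appears. The condition $\theta \ge \frac{D_0}{1-\lambda_0}$ is what guarantees $\dnorm{\hinv{-}}_{BV} \le \theta$ (so each integrated variable costs at most $\theta$), and the fact that $\hinv{-}$ is a \emph{fixed} one-dimensional probability density keeps the argument from producing any uncontrolled product over the integrated sites. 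Once the per-operator contraction is in hand, the factor $2$ is purely from the triangle inequality across the two terms in the definition of $\proj{qp}$.
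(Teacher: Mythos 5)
Your proposal is correct and follows essentially the same route as the paper's own proof: both split $\proj{qp}$ into the two integral operators and use the triangle inequality for the factor $2$, transfer derivatives on integrated-out variables onto $\hinv{-}$ at a cost of $\dnorm{\hinv{-}}_{BV}$ per variable (bounded by $\tfrac{D_0}{1-\lambda_0}$ via the Lasota--Yorke inequality (\ref{eq0.1})), let derivatives on surviving variables pass to $\mu$, and use $\theta \geq \tfrac{D_0}{1-\lambda_0} \geq \dnorm{\hinv{-}}_{BV}$ to absorb the per-variable cost into the normalization $\theta^{-\norm{\Lambda}}$ of the $\theta$-norm. The paper merely organizes the bookkeeping slightly differently, deriving a single intermediate inequality (its (\ref{eq31})) from which all three bounds follow, but the decomposition and key estimates are the same as yours.
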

\begin{proof}For the two first inequalities, it is sufficient to prove that, for any $\Lambda \subset \ent$, we have:
\begin{align*}
\var{\Lambda}\proj{qp}\mu  \leq 2 \left(\tfrac{D_0}{1- \lambda_0}\right)^{\norm{\Lambda}} \var{\Lambda}(\mu).
\end{align*}
So, for any function $\varphi$ in $\mathcal{C}_{\Lambda}^1(X)$ with $\norm{\varphi}_{\infty} \leq 1$, we consider:
\begin{align}\label{eq30}
\proj{qp}\mu(\partial_{\Lambda} \varphi) = \mu \left(\int \partial_{\Lambda}\varphi \prod_{s \prec_q p }  \hinv{-}(x_s)\ \mathrm{d}x_s\right) - \mu \left(\int \partial_{\Lambda} \varphi \prod_{s \preceq_q p }  \hinv{-}(x_s)\ \mathrm{d}x_s\right) .
\end{align}
If we define $\Omega = \{\,k \in \Lambda\ |\ q \preceq_q k \prec_q p\,\}$ , we see that the derivatives with respect $\vect{x}_{\Lambda \setminus \Omega}$ commute with the first integral of (\ref{eq30}). And the same can be done for the second integral of (\ref{eq30}) with $\Omega' = \{\,k \in \Lambda\ |\ q \preceq_q k \preceq_q p\,\}$. And so, by definition of $\var{\Omega}$ or $\var{\Omega'}$, we get:
\begin{align}
\proj{qp}\mu(\partial_{\Lambda} \varphi) & = \mu \left(\partial_{\Lambda\setminus \Omega}\int \partial_{\Omega}\varphi \prod_{s \prec_q p } \hinv{-}(x_s)\ \mathrm{d}x_s\right)\nonumber\\
&\qquad  - \mu \left(\partial_{\Lambda\setminus \Omega'}\int \partial_{\Omega'} \varphi \prod_{s \preceq_q p }  \hinv{-}(x_s)\ \mathrm{d}x_s\right)\nonumber\\ 
& \leq \var{\Lambda \setminus \Omega}(\mu) \ \norm{\int \partial_{\Omega}\varphi \prod_{s \prec_q p } \hinv{-}(x_s)\ \mathrm{d}x_s}_{\infty} \nonumber\\
&\qquad +\var{\Lambda \setminus \Omega'}(\mu) \norm{\int \partial_{\Omega'} \varphi \prod_{s \preceq_q p }  \hinv{-}(x_s)\ \mathrm{d}x_s}_{\infty}\nonumber\\ 
&\leq \var{\Lambda \setminus\Omega}(\mu)\  \dnorm{\hinv{-}}_{BV}^{\norm{\Omega}}\,\norm{\varphi}_{\infty} + \var{\Lambda \setminus \Omega'}(\mu)\ \dnorm{\hinv{-}}_{BV}^{\norm{\Omega'}}\,\norm{\varphi}_{\infty}\label{eq31}.
\end{align}

We can find an upper bound on $\dnorm{\hinv{-}}_{BV}$ with the the Lasota-Yorke inequality of $\tau$ from \ref{eq0.1}. Indeed, if $h_{\mathrm{leb}}^{{\scriptscriptstyle(-)}}$ is the Lebesgue measure concentrated on $[-1,0]$, we have:
\begin{equation*}
\lVert\tau^n h_{\mathrm{leb}}^{{\scriptscriptstyle(-)}}\rVert_{BV} \leq \lambda_0^n \lVert h_{\mathrm{leb}}^{{\scriptscriptstyle(-)}} \rVert_{BV} + \frac{D_0}{1 - \lambda_0} \lvert h_{\mathrm{leb}}^{(-)}\rvert_{L^1(I)},
\end{equation*}
and it implies that $\dnorm{\hinv{-}}_{BV} \leq \frac{D_0}{1 - \lambda_0}$ because $\tau^n h_{\mathrm{leb}}^{{\scriptscriptstyle(-)}}$ converges to $\hinv{-}$. Hence, since $\var{\Lambda \setminus\Omega} \mu \leq \var{\Lambda} \mu$, $\Omega \subseteq \Lambda$ and since we already saw that $\frac{D_0}{1 - \lambda_0} \geq 1$, (\ref{eq31}) becomes:
\begin{align*}
\var{\Lambda}\proj{qp}\mu & \leq \var{\Lambda \setminus\Omega}(\mu)\  \left(\tfrac{D_0}{1- \lambda_0}\right)^{\norm{\Omega}} + \var{\Lambda \setminus \Omega'}(\mu)\ \left(\tfrac{D_0}{1- \lambda_0}\right)^{\norm{\Omega'}}\\
&  \leq \var{\Lambda}(\mu)\  \left(\tfrac{D_0}{1- \lambda_0}\right)^{\norm{\Omega}} + \var{\Lambda }(\mu)\ \left(\tfrac{D_0}{1- \lambda_0}\right)^{\norm{\Omega'}}\\
&  \leq 2 \left(\tfrac{D_0}{1- \lambda_0}\right)^{\norm{\Lambda}}\ \var{\Lambda }(\mu),
\end{align*}
and this proves the two first inequalities of the Lemma.

The bound in the $\theta$-norm is a consequence of (\ref{eq31}). Indeed, if we multiply each side of the inequality by $\theta^{- \norm{\Lambda}}$, and use $\theta \geq \frac{D_0}{1-\lambda_0} \geq \dnorm{\hinv{-}}$, we get:
\begin{equation*}
\tnorm{\proj{qp}\mu}{\theta} \leq \tnorm{\mu}{\theta} + \tnorm{\mu}{\theta}.
\end{equation*}
\end{proof}

Let us now consider some signed measure of zero mass $\mu$ and some continuous function $\varphi$ on $X$ depending only on the variables $x_\Lambda$ for some finite $\Lambda$ in $\ent$, and carry out the decomposition of (\ref{eq29}) after every $\gamma$ iteration of the dynamic. If we assume that $t = m\, \gamma$ for some $m \in \nat$, we get:
\begin{equation*}
T^t \mu (\varphi) =   \sum_{\vect{p} \in \ent^{m+1}} \proj{p_{m-1} p_{m}}T^{\gamma} \proj{p_{m-2} p_{m-1}}\ldots T^{\gamma}\proj{0 p_{0}}\mu(\varphi).
\end{equation*}
If $\mathcal{F}(\Lambda)$ is the set of $\vect{p}$ such that $p_m \in \Lambda$ and every $p_{k}$ belongs to ${\{p_{k-1}- \gamma,\ldots,p_{k-1}\}}$, we notice that every configuration of $\vect{p}$ not belonging to $\mathcal{F}(\Lambda)$ does not contribute to the sum. Indeed, if $p_m$ does not belong to $\Lambda$, $\proj{p_{m-1} p_{m}} \varphi$ is identically zero because $\varphi$ does not depend on $x_{p_m}$, so $\proj{p_{m-1} p_{m}}\nu(\varphi) = \nu(\proj{p_{m-1} p_{m}}\varphi) = 0$ for any measure $\nu$. And if $p$ does not belong to $\{q - \gamma,\ldots,q\}$, we see that, from the definition of $\prec$,  $\proj{q p}\varphi$ does not depend on any variable $x_{s}$ with $s \in \{q - \gamma,\ldots,q\}$, and so $\left(\proj{q p}\varphi\right)\circ T^{\gamma}$ does not depend on $x_q$. Therefore, $\proj{q p} T^{\gamma} \proj{sq}\nu = 0$ for any $s \in \ent$ and any measure $\nu$. Hence, if we define:
\begin{equation}
\tilde{T}^{qp} = T^{\gamma} \proj{q p} \label{eq33.-1} ,
\end{equation}
and apply Lemma \ref{prop8}, the expansion becomes:
\begin{align}
\norm{T^t \mu (\varphi)}& \leq \sum_{\vect{p} \in \mathcal{F}(\Lambda)} \norm{\proj{p_{m-1} p_{m}}T^{\gamma} \proj{p_{m-2} p_{m-1}}\ldots T^{\gamma}\proj{0 p_{0}}\mu(\varphi)}\nonumber\\
& \leq 2\,\sum_{\vect{p} \in \mathcal{F}(\Lambda)} \norm{\tilde{T}^{p_{m-2} p_{m-1}}\ldots \tilde{T}^{0 p_{0}}\mu}\,\norm{\varphi}_{\infty}\label{eq33}.
\end{align}
In the next subsection, we will prove with a decoupling argument that the dynamic restricted to a pure phase, namely the operator $\tilde{T}^{qp}\charf{[-1,0]}(x_p) \charf{[-1,0]}(x_{p+1})$, is a contraction in $\mathcal{B}(X)$.

\subsection{Decoupling in the pure phases}
\label{decpurphas}

The idea behind the decoupling in the pure phase is to reproduce the decoupling argument of Keller and Liverani \cite{KeLi05,KeLi06}, but instead of considering the coupling as a perturbation of the identity, we will consider the coupling as a perturbation of a strongly coupled dynamic for which we can prove the exponential convergence to equilibrium in the pure phases. The decoupled dynamic at site $p$ is given by $T_{0}^{(p)} = \Phi_{0}^{(p)} \circ \tau^{  \ent}$, where the coupling $\Phi_{0}^{(p)}$ is explicitly given by:
\begin{equation}\label{eq36}
\Phi_{0,q}^{(p)}(\vect{x}) = 
\begin{cases} \Phi_{\epsilon,q}(\vect{x}) \;&
\textrm{if}\; q \neq p\\
\Phi_{0,p}(\vect{x}_{\neq p+1}, -1)\;&\textrm{if}\; q = p\\
\end{cases}
\end{equation}

The next proposition shows that this slight modification of the coupling does not change too much the dynamic when applied to a measure $\charf{[-1,0]}(x_{p+1})\mu$.

\begin{proposition}\label{prop9} Let $\mu \in \mathcal{B}(X)$ and $p \in \ent$. Then:
\begin{equation*}
\norm{\,\Phi_{\epsilon}\,\charf{[-1,0]}(x_{p+1})\,\mu  - \Phi_{0}^{(p)}\,\charf{[-1,0]}(x_{p+1})\,\mu\,} \leq  2\,\epsilon\, \dnorm{\mu}.
\end{equation*}
\end{proposition}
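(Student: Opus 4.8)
The plan is to reduce the statement to an estimate in the single direction $p$, using that on the support of $\charf{[-1,0]}(x_{p+1})\mu$ the two maps $\Phi_\epsilon$ and $\Phi_0^{(p)}$ differ only by a rigid shift of size $\epsilon$ in the coordinate $p$. Write $\nu=\charf{[-1,0]}(x_{p+1})\mu$. Comparing the branches of $\Phi_\epsilon$ with the modified coupling (\ref{eq36}), one checks that on $\{x_{p+1}\le 0\}$ the two maps agree in every coordinate $q\neq p$ (by definition of $\Phi_0^{(p)}$), while in the coordinate $p$ one has $\Phi_\epsilon(\vect x)_p=\Phi_0^{(p)}(\vect x)_p+\epsilon$ irrespective of the sign of $x_p$. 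Hence, if $\sigma_\epsilon$ denotes the translation $\vect x\mapsto \vect x+\epsilon\,\vect{e}_p$, then $\Phi_\epsilon\vect x=\sigma_\epsilon\circ\Phi_0^{(p)}(\vect x)$ for every $\vect x\in\operatorname{supp}\nu$, and therefore, as transfer operators, $\Phi_\epsilon\nu=\sigma_\epsilon(\Phi_0^{(p)}\nu)$. Writing $\rho=\Phi_0^{(p)}\nu$, the quantity to bound is simply $\norm{\sigma_\epsilon\rho-\rho}$.

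Next I would estimate the displacement of a measure under a small one-dimensional shift. For $\varphi\in\mathcal{C}^1(X)$ with $\norm{\varphi}_\infty\le 1$, the fundamental theorem of calculus gives $\varphi(\vect y+\epsilon\vect{e}_p)-\varphi(\vect y)=\int_0^\epsilon \partial_p\varphi(\vect y+s\vect{e}_p)\,\dif s$, so that
\[
(\sigma_\epsilon\rho-\rho)(\varphi)=\int_0^\epsilon \rho\bigl(\partial_p[\varphi(\,\cdot+s\vect{e}_p)]\bigr)\,\dif s\le \epsilon\,\var{\{p\}}\rho ,
\]
since each integrand is bounded by $\var{\{p\}}\rho$ and $\norm{\varphi(\cdot+s\vect{e}_p)}_\infty\le 1$ (the translates stay in $X$ because $\rho$ is supported on $\{y_p\in[-1,0]\}$). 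Taking the supremum over $\varphi$ yields $\norm{\sigma_\epsilon\rho-\rho}\le \epsilon\,\var{\{p\}}(\Phi_0^{(p)}\nu)$. The essential point is that only the variation in the single direction $p$ enters here.

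It then remains to show $\var{\{p\}}(\Phi_0^{(p)}\nu)\le 2\,\dnorm{\mu}$, which is the real content of the proposition. In the coordinate $p$ the map $\Phi_0^{(p)}$ is the two-branch piecewise translation $x_p\mapsto x_p-1$ on $(0,1]$ and $x_p\mapsto x_p$ on $[-1,0]$, both landing in $[-1,0]$; it is two-to-one, and the factor $2$ is precisely the cost of this folding. I would split $\nu=\charf{(0,1]}(x_p)\nu+\charf{[-1,0]}(x_p)\nu$ and use subadditivity of $\var{\{p\}}$. On each piece $\Phi_0^{(p)}$ acts in the coordinate $p$ as a pure translation (derivative $1$), while all transverse coordinates are mapped by translations not depending on $x_p$ (the coordinate $p-1$ depends only on the sign of $x_p$, which is fixed on each piece); such a map leaves $\var{\{p\}}$ unincreased, so $\var{\{p\}}(\Phi_0^{(p)}\charf{(0,1]}(x_p)\nu)\le \var{\{p\}}(\charf{(0,1]}(x_p)\nu)$ and likewise for the other piece.

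The remaining, genuinely delicate, step is the splitting estimate $\var{\{p\}}(\charf{(0,1]}(x_p)\nu)+\var{\{p\}}(\charf{[-1,0]}(x_p)\nu)\le 2\,\var{\{p\}}\nu$: cutting the measure at $x_p=0$ creates two boundary traces at $x_p=0^{\pm}$, and one must check these do not exceed the variation already present. Working on one-dimensional fibers in the direction $p$, each trace is controlled by $\var{\{p\}}\nu$ through the identity $\varphi(\vect x)=\partial_p\int_0^{x_p}\varphi$ underlying (\ref{eq2.1}) (the trace at $0^+$ is bounded by the variation over $(0,1]$ plus the boundary value at $1$, symmetrically at $0^-$), so that summing the two halves adds exactly one extra copy of $\var{\{p\}}\nu$. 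Finally, since $p+1\neq p$, the cut $\charf{[-1,0]}(x_{p+1})$ is transverse to the direction $p$, whence $\var{\{p\}}\nu=\var{\{p\}}(\charf{[-1,0]}(x_{p+1})\mu)\le \var{\{p\}}\mu\le\dnorm{\mu}$. Chaining the three inequalities gives $\norm{\Phi_\epsilon\nu-\Phi_0^{(p)}\nu}\le 2\,\epsilon\,\dnorm{\mu}$. I expect the splitting/trace estimate to be the main obstacle, as it is the only place where a genuine loss occurs and where the transverse restriction and the folding must be disentangled while keeping only the single-direction variation.
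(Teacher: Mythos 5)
Your proposal is correct, but it follows a genuinely different route from the paper's. The paper runs the Keller--Liverani perturbation argument: it interpolates $F_t = t\,\Phi_{\epsilon} + (1-t)\,\Phi_{0}^{(p)}$, differentiates in $t$, observes that on $\{x_{p+1}\leq 0\}$ only the coordinate $q=p$ contributes (with $\partial_t F_{t,p}=\epsilon$), and then absorbs the remaining $\partial_p$ into the antiderivative $\psi=\int_0^{x_p}\partial_p\varphi\circ F_t$, which satisfies $\norm{\psi}_{\infty}\leq 2\norm{\varphi}_{\infty}$, concluding via Proposition~\ref{propA4} and Corollary~\ref{propA5}. You instead exploit the fact that the perturbation is an \emph{exact rigid shift}: on $\{x_{p+1}\leq 0\}$ one indeed has $\Phi_{\epsilon}=\sigma_{\epsilon}\circ\Phi_{0}^{(p)}$ coordinate by coordinate, so the claim reduces to the displacement bound $\norm{\sigma_{\epsilon}\rho-\rho}\leq \epsilon\,\var{\{p\}}\rho$ together with the pushforward bound $\var{\{p\}}(\Phi_{0}^{(p)}\nu)\leq 2\dnorm{\mu}$. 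Both arguments are sound and yield the same constant; in your version the factor $2$ is the folding cost of the two-to-one map at $x_p=0$, while in the paper it appears as $\norm{\psi}_{\infty}\leq 2\norm{\varphi}_{\infty}$ (a difference of two values of $\varphi$ across the same jump). Two remarks. First, the step you single out as the main obstacle --- the splitting estimate $\var{\{p\}}(\charf{(0,1]}(x_p)\nu)+\var{\{p\}}(\charf{[-1,0]}(x_p)\nu)\leq 2\,\var{\{p\}}\nu$ --- is immediate from Corollary~\ref{propA5} applied once to each sign piece (the case $p\in\Lambda$ of that corollary), and the transverse cut $\var{\{p\}}(\charf{[-1,0]}(x_{p+1})\mu)\leq\var{\{p\}}\mu$ is the same corollary with $p+1\notin\{p\}$; your trace sketch essentially re-proves it. Second, like the paper you implicitly test the total variation norm against $\mathcal{C}^1$ functions and apply $\var{\{p\}}$ to translated or composed test functions that are only continuous and piecewise $\mathcal{C}^1$ in $x_p$ and piecewise continuous in the remaining variables; this is justified by Propositions~\ref{propA2} and~\ref{propA4}, and should be said. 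The trade-off: your argument is more elementary and makes the origin of the constant transparent, but it hinges on the perturbation being a rigid translation in the single coordinate $p$, a special feature of this Stavskaya-type coupling; the paper's homotopy argument needs no such structure and survives perturbations that are merely $\mathcal{C}^1$-small, which is why it is the standard tool in this setting.
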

\begin{proof}For the demonstration of this Proposition, we will basically follow the lines of the proof of Proposition 5 in \cite{KeLi05}. If  $F_t = t\, \Phi_{\epsilon} + (1-t)\, \Phi_{0}^{(p)}$, we can state that:
\begin{align}
\mu\Big(\charf{[-1,0]}(x_{p+1}) \, \varphi \circ\Phi_{\epsilon}\Big)&  - \mu\Big(\charf{[-1,0]}(x_{p+1}) \, \varphi \circ\Phi_{0}^{(p)}\Big)\nonumber\\
 & = \mu\left( \charf{[-1,0]}(x_{p+1}) \int_{0}^{1} \partial_t \left(\varphi \circ F_t\right) \dif{t}\right)\nonumber\\
& = \int_{0}^{1} \,\mu\left( \sum_{q} \charf{[-1,0]}(x_{p+1})\;\left(\partial_q \varphi \circ F_t\right) \; \partial_t F_{t,q}  \right) \, \dif{t} \label{eq37}.
\end{align}
But we can check that $\charf{[-1,0]}(x_{p+1})\,\partial_t F_{t,q} = \charf{[-1,0]}(x_{p+1})\,(\Phi_{\epsilon,q}(\vect{x}) - \Phi_{0,q}^{(p)}(\vect{x}))$ is equal to $0$ if $q \neq p$, and to $\charf{[-1,0]}(x_{p+1})\,\epsilon$ if $q = p$. So, the sum over $q$ reduces to the term $q=p$  and becomes:
\begin{align}
&\mu\Big(\charf{[-1,0]}(x_{p+1}) \, \varphi \circ\Phi_{\epsilon}\Big)  - \mu\Big(\charf{[-1,0]}(x_{p+1}) \, \varphi \circ\Phi_{0}^{(q)}\Big)\nonumber\\
& \rule{30pt}{0pt} = \epsilon \;\int_{0}^{1} \,\mu\left(\,\charf{[-1,0]}(x_{p+1})\,\partial_p \varphi \circ F_t \,  \right) \, \dif{t}\label{eq38}.
\end{align}
But, if we define the function $\psi$:
\begin{equation*}
\psi(\vect{x}) = \int_{0}^{x_p}   (\partial_p\varphi \circ F_t)(\xi_p,\vect{x}_{\neq p})\,\dif{\xi_p}.
\end{equation*}
we can check that $\psi$ is continuous with respect to $x_p$, piecewise continuously differentiable, that $\partial_p \psi = \partial_p \varphi \circ F_t$, and as long as $x_{p+1}$ belongs to $[-1,0]$, we have $\partial_p F_{t,p} = 1$ and:
\begin{align*}
\sup_{\vect{x}: x_p \leq 0} \psi(\vect{x}) & = \sup_{\vect{x}: x_p \leq 0} \norm{ \int_{0}^{x_p}   (\partial_p\varphi \circ F_t)(\xi_p,\vect{x}_{\neq p})\,\dif{\xi_p}}\\
& = \sup_{\vect{x}: x_p \leq 0} \norm{\int_{0}^{x_p}   \frac{\partial_p (\varphi \circ F_t)(\xi_p,\vect{x}_{\neq p})}{\partial_p F_{t,p} }\,\dif{\xi_p}}\\
& =  \sup_{\vect{x}: x_p \leq 0} \norm{ \int_{0}^{x_p}   \partial_p (\varphi \circ F_t)(\xi_p,\vect{x}_{\neq p})\,\dif{\xi_p}}\\
& \leq 2 \norm{\varphi\circ F_t}_{\infty} \leq 2  \norm{\varphi}_{\infty}.
\end{align*}
With Proposition \ref{propA4} and Corollary \ref{propA5}, the definition of $\psi$ implies that:
\begin{align*}
\mu\left(\,\charf{[-1,0]}(x_{p+1})\,\partial_p \varphi \circ F_t \,  \right) & = \mu\left(\,\charf{[-1,0]}(x_{p+1})\,\partial_p \psi \,  \right)\leq 2 \norm{\varphi}_{\infty} \dnorm{\mu},
\end{align*}
and we conclude by inserting this inequality into equation (\ref{eq38}):
\begin{align*}
&\mu\Big(\charf{[-1,0]}(x_{p+1}) \, \varphi \circ\Phi_{\epsilon}\Big)  - \mu\Big(\charf{[-1,0]}(x_{p+1}) \, \varphi \circ\Phi_{0}^{(p)}\Big)\leq 2 \,\epsilon \;\norm{\varphi}_{\infty} \dnorm{\mu} .
\end{align*}
\end{proof}

This estimate allows us to control the difference between the original dynamic $T$ and $T_{0}^{(p)} = \Phi_{0}^{(p)} \circ \tau^{  \ent}$ when the site $p+1$ stays negative:
\begin{proposition}\label{prop10}
For any measure $\mu \in \mathcal{B}(X)$, $p \in \ent$ and $m \in \nat$, we have:
\begin{align*}
\norm{\Big( T \charf{[-1,0]}(x_{p+1})\Big)^m \mu - \Big(T_{0}^{(p)} \charf{[-1,0]}(x_{p+1})\Big)^m \mu} \leq    \tfrac{1 + m \,D_0}{1 - \lambda_0} \,2\,\epsilon\, \dnorm{\mu}.
\end{align*}
 \end{proposition}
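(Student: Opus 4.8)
The plan is to compare the two operators $A \equiv T\,\charf{[-1,0]}(x_{p+1})$ and $B \equiv T_{0}^{(p)}\,\charf{[-1,0]}(x_{p+1})$ through the telescoping identity
\[
A^{m}\mu - B^{m}\mu = \sum_{j=0}^{m-1} A^{\,m-1-j}\,(A-B)\,B^{j}\mu ,
\]
and to estimate each summand in total variation. First I would note that both $A$ and $B$ are contractions for $\norm{\,\cdot\,}$: multiplication by the indicator never increases the total variation norm, since $\norm{\charf{[-1,0]}(x_{p+1})\varphi}_{\infty}\le\norm{\varphi}_{\infty}$, and $\norm{T\,\cdot\,}\le\norm{\,\cdot\,}$ by (\ref{eq1.1}). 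Hence $\norm{A^{\,m-1-j}\xi}\le\norm{\xi}$, and the problem reduces to bounding $\sum_{j=0}^{m-1}\norm{(A-B)B^{j}\mu}$.

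Next I would set $\nu_{j}=B^{j}\mu$ and reduce the single difference $(A-B)\nu_{j}$ to the setting of Proposition \ref{prop9}. Writing $T=\Phi_{\epsilon}\circ\tau^{\ent}$ and $T_{0}^{(p)}=\Phi_{0}^{(p)}\circ\tau^{\ent}$ and putting $\rho_{j}=\tau^{\ent}\charf{[-1,0]}(x_{p+1})\nu_{j}$, one has $A\nu_{j}=\Phi_{\epsilon}\rho_{j}$ and $B\nu_{j}=\Phi_{0}^{(p)}\rho_{j}$. The crucial observation is that $\tau$ leaves $[-1,0]$ invariant, so the pushforward $\rho_{j}$ is again concentrated on $\{x_{p+1}\le0\}$; therefore $\charf{[-1,0]}(x_{p+1})\rho_{j}=\rho_{j}$ and Proposition \ref{prop9} applies directly to $\rho_{j}$, giving
\[
\norm{(A-B)\nu_{j}} = \norm{\Phi_{\epsilon}\charf{[-1,0]}(x_{p+1})\rho_{j}-\Phi_{0}^{(p)}\charf{[-1,0]}(x_{p+1})\rho_{j}}\le 2\,\epsilon\,\dnorm{\rho_{j}} .
\]

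It then remains to control $\sum_{j=0}^{m-1}\dnorm{\rho_{j}}$ by a Lasota--Yorke bookkeeping. The one estimate I need is a single-site Lasota--Yorke inequality for the \emph{restricted} map, namely $\dnorm{\tau^{\ent}\charf{[-1,0]}(x_{p+1})\nu}\le\lambda_{0}\dnorm{\nu}+D_{0}\norm{\nu}$, together with the fact that the piecewise-translation couplings $\Phi_{\epsilon}$ and $\Phi_{0}^{(p)}$ do not increase $\dnorm{\,\cdot\,}$. Granting these, $\dnorm{\rho_{j}}\le\lambda_{0}\dnorm{\nu_{j}}+D_{0}\norm{\nu_{j}}$ and $\dnorm{\nu_{j+1}}=\dnorm{\Phi_{0}^{(p)}\rho_{j}}\le\dnorm{\rho_{j}}$, so that $\dnorm{\nu_{j}}\le\lambda_{0}^{\,j}\dnorm{\mu}+\tfrac{D_{0}}{1-\lambda_{0}}\norm{\mu}$; combined with $\norm{\nu_{j}}\le\norm{\mu}\le\dnorm{\mu}$ from the first step and (\ref{eq2.1}), the two contributions $\tfrac{1}{1-\lambda_{0}}\dnorm{\mu}$ (from the geometric factor $\lambda_{0}^{\,j+1}$) and $m\,\tfrac{D_{0}}{1-\lambda_{0}}\dnorm{\mu}$ (from the source terms) collapse into the announced factor $\tfrac{1+m\,D_{0}}{1-\lambda_{0}}$, which multiplied by $2\epsilon$ gives the claim.

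The main obstacle is precisely that single-site inequality in the presence of the indicator $\charf{[-1,0]}(x_{p+1})$: restricting to $\{x_{p+1}\le0\}$ creates an artificial discontinuity at $x_{p+1}=0$ that a priori inflates the bounded variation norm in the $(p+1)$-direction, so the estimate is not an immediate instance of (\ref{eq17.1}). The point I would have to establish carefully is that this spurious jump is absorbed into the lower-order $D_{0}\norm{\,\cdot\,}$ term rather than the contracting $\lambda_{0}\dnorm{\,\cdot\,}$ term: because $\tau$ expands ($\kappa>2$) and maps $[-1,0]$ into itself, the integration by parts underlying the proof of Proposition \ref{prop2} turns the boundary value at $x_{p+1}=0$ into a contribution of size $O(\norm{\nu})$, leaving the contraction factor $\lambda_{0}$ intact. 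Verifying that inserting $\charf{[-1,0]}(x_{p+1})$ before $\tau^{\ent}$ does not spoil the single-site Lasota--Yorke inequality is the technical heart of the argument; everything else is the telescoping, Proposition \ref{prop9}, and a geometric summation.
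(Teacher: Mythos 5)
Your overall scheme --- telescoping, total--variation contraction of the untouched factors via (\ref{eq1.1}), reduction of each difference term to Proposition \ref{prop9} through the sign--preservation of $\tau$, then a Lasota--Yorke bookkeeping --- is exactly the paper's scheme, and those steps of yours are sound. But one of the two estimates you ask to be ``granted'' is false, and your recursion leans on it. You claim that the couplings $\Phi_{\epsilon}$ and $\Phi_{0}^{(p)}$ do not increase $\dnorm{\,\cdot\,}$, and you use this as $\dnorm{\nu_{j+1}}=\dnorm{\Phi_{0}^{(p)}\rho_{j}}\leq\dnorm{\rho_{j}}$ to propagate bounded variation along the iterates of the \emph{modified} dynamics $B$. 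A piecewise translation, however, can cut a density into pieces and stack them on top of each other, and stacking adds variation. Take $\mu=\prod_{q\in\ent}\tfrac{1}{2}\charf{[-1,1]}(x_q)\,\dif x_q$, for which $\dnorm{\mu}=1$. Since $\Phi_{0,p}^{(p)}$ sends $x_p$ to $x_p-1$ when $x_p>0$ and fixes $x_p$ when $x_p\leq 0$, both halves of the density at site $p$ land on $[-1,0]$, and for $\varphi\in\mathcal{C}^1(X)$ depending only on $x_p$,
\begin{equation*}
\Phi_{0}^{(p)}\mu\left(\partial_p\varphi\right)=\tfrac12\int_{0}^{1}\varphi'(x-1)\,\dif x+\tfrac12\int_{-1}^{0}\varphi'(x)\,\dif x=\varphi(0)-\varphi(-1),
\end{equation*}
whose supremum over $\norm{\varphi}_{\infty}\leq1$ is $2$; hence $\dnorm{\Phi_{0}^{(p)}\mu}\geq 2>\dnorm{\mu}$. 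The factor $2$ appearing here is exactly the factor produced by repairing the jumps of the coupling (the operator $R_{i,p}$ in the proof of Proposition \ref{prop2}); it is harmless in the paper only because there it always comes multiplied by the $1/\kappa$ supplied by $\tau'$, giving $\lambda_0=2/\kappa<1$. Detached from $\tau^{\ent}$, the coupling has nothing to compensate it, so the inequality you grant is simply not true, and even a restricted version for the particular measures $\rho_j$ arising in your iteration would require the same regularization machinery as Proposition \ref{prop2} --- it is not a fact one may assume.

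The repair is to orient the telescoping the other way, which is what the paper does: write $A^{m}\mu-B^{m}\mu=\sum_{j=0}^{m-1}B^{\,m-1-j}(A-B)A^{j}\mu$, so that the powers of $B$ sit on the left, where only the total--variation contraction (\ref{eq1.1}) is needed, and the difference lands on $A^{j}\mu=\bar{T}^{j}\mu$ with $\bar{T}=T\charf{[-1,0]}(x_{p+1})$. The \emph{full} coupled map does satisfy a Lasota--Yorke inequality, $\dnorm{\bar{T}\nu}\leq\lambda_0\dnorm{\nu}+D_0\norm{\nu}$ (this is (\ref{eq44.1}), i.e.\ (\ref{eq17.1}) combined with Corollary \ref{propA5}), so $\dnorm{\bar T^{j}\mu}\leq\lambda_0^{j}\dnorm{\mu}+\tfrac{D_0}{1-\lambda_0}\norm{\mu}$; after that, your own steps --- the reduction to Proposition \ref{prop9} and the geometric summation --- go through verbatim and give the stated constant. (Alternatively you could keep your orientation but never split $\Phi_{0}^{(p)}$ from $\tau^{\ent}$: the proof of Proposition \ref{prop2} applies essentially verbatim to $T_{0}^{(p)}$, whose coupling is still a piecewise translation discontinuous only along the sign partition; but that lemma is not in the paper and you would have to write it out.) Finally, note that you located the ``technical heart'' in the wrong place: inserting $\charf{[-1,0]}(x_{p+1})$ costs nothing in the paper's dual norms, since Corollary \ref{propA5} says that multiplication by $\charf{[a,b]}(x_q)$ never increases $\var{\Lambda}$, so your estimate (a) follows immediately from the Lasota--Yorke inequality for $\tau^{\ent}$ with no boundary terms to absorb. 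The genuine obstruction in your write--up is the coupling, not the indicator.
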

\begin{proof}
Once again, we follow the proof of Theorem 6 in \cite{KeLi05}. We define $\bar{T} = T \charf{[-1,0]}(x_{p+1})$, and $\bar{T}_{0}^{(p)} = T_{0}^{(p)} \charf{[-1,0]}(x_{p+1})$. Then, with the help of a simple telescopic sum, we have:
\begin{align}\label{eq43}
& \bar{T}_{0}^{(p)\, m} \mu  = \bar{T}^m \mu +  \sum_{k = 0}^{m-1}  \bar{T}_{0}^{(p)\,m-k-1}  \Bigg(\bar{T}_{0}^{(p)} - \bar{T} \Bigg)\bar{T}^{k} \mu.
\end{align}
Then, taking the total variation norm of this expansion and applying Proposition \ref{prop9} to control the difference between  $\Phi_{0}^{(p)}$ and $\Phi_{\epsilon} $, we get:
\begin{align}
& \norm{\bar{T}^m \mu - \bar{T}_{0}^{(p)\, m} \mu} \leq \sum_{k = 1}^{m}  \norm{\bar{T}_{0}^{(p)\,m-k-1}\,\Bigg(\bar{T}_{0}^{(p)} - \bar{T} \Bigg) \bar{T}^{k-1} \mu}\nonumber\\ 
& \qquad\leq  \sum_{k = 1}^{m}  \norm{\left(\bar{T}_{0}^{(p)} - \bar{T} \right) \bar{T}^{k-1} \mu} \leq  \sum_{k = 1}^{m}  \norm{\left(\Phi_{0}^{(p)} - \Phi_{\epsilon}\right) \tau^{\ent}\charf{[-1,0]}(x_{p+1}) \bar{T}^{k-1} \mu}\nonumber\\
& \qquad\leq \sum_{k = 1}^{m}  \norm{\left(\Phi_{0}^{(p)} - \Phi_{\epsilon}\right) \charf{[-1,0]}(x_{p+1})\tau^{\ent} \bar{T}^{k-1} \mu}\nonumber\\
&\qquad\leq   \sum_{k = 1}^{m}  \norm{\left(\Phi_{0}^{(p)} \charf{[-1,0]}(x_{p+1}) - \Phi_{\epsilon} \charf{[-1,0]}(x_{p+1})\right)\tau^{  \ent} \bar{T}^{k-1} \mu}\nonumber\\
&\qquad\leq  \sum_{k = 1}^{m} \, 2\, \epsilon  \,\dnorm{\tau^{  \ent}  \bar{T}^{k-1} \mu}\label{eq44}.
\end{align}
But $\bar{T}$ satisfies a Lasota-Yorke inequality, because of Corollary \ref{propA5}:
\begin{align}
\dnorm{\bar{T}\mu} \leq \lambda_0 \dnorm{\charf{[-1,0]}(x_{p+1})\mu} + D_0 \norm{\charf{[-1,0]}(x_{p+1})\mu} \leq \lambda_0 \dnorm{\mu} + D_0 \norm{\mu},
\label{eq44.1}\end{align}
and $\tau^{  \ent}$ also satisfies the same inequality, as a consequence of (\ref{eq0.1}). Therefore:
\begin{equation*}
\dnorm{\tau^{  \ent} \bar{T}^{k-1} \mu} \leq \left(\lambda_0^{k}  + \tfrac{D_0}{1-\lambda_0}\right) \dnorm{\mu},
\end{equation*}
and inequality (\ref{eq44}) becomes:
\begin{align*}
 \norm{\bar{T}^m \mu - \bar{T}_{0}^{(p)\, m} \mu}& \leq   \sum_{k = 1}^{m} \, 2\, \epsilon  \, \left(\lambda_0^{k}  + \tfrac{D_0}{1-\lambda_0}\right) \dnorm{\mu} \leq     \frac{1 + m \,D_0}{1 - \lambda_0} \,2\,\epsilon\, \dnorm{\mu}.
\end{align*}
\end{proof}

We are now ready to prove that if $\mu$ is a signed measure of bounded variation concentrated on $x_p \in [-1,0]$ and $x_{p+1} \in [-1,0]$, the operator $\tilde{T}^{qp}$ acting on $\mu$ is a contraction:
\begin{theorem}\label{prop11}For any measure $\mu$ in $\mathcal{B}(X)$, and any $q$ and $p$ in $\ent$, we have
\begin{equation*}
\dnorm{\tilde{T}^{qp} \charf{[-1,0]}(x_p) \charf{[-1,0]}(x_{p+1})\mu} \leq {\sigma_1} \dnorm{\mu},
\end{equation*}
where $\sigma_1$ is given by:
\begin{equation*}
\sigma_1 = \left[2\,\tfrac{D_0}{1-\lambda_0}\,\left(\lambda_0^{\gamma - n} + \lambda_0^{\gamma}  + c \varsigma^n\right)\,+  \left(\tfrac{D_0}{1-\lambda_0}\right)^2\ \tfrac{1 + n \,D_0}{1 - \lambda_0} \,\left(4\,\epsilon +  \norm{E_{\epsilon}}\right)\,\right].
\end{equation*}
\end{theorem}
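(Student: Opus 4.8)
The plan is to adapt the decoupling technique of Keller and Liverani, viewing the genuine coupling $\Phi_{\epsilon}$ as a perturbation of the decoupled coupling $\Phi_0^{(p)}$, and to feed the resulting estimates into the single-site decay of correlations (\ref{eq0.2}) together with the Lasota--Yorke inequalities (\ref{eq17.1})--(\ref{eq17.2}). Write $\nu = \charf{[-1,0]}(x_p)\,\charf{[-1,0]}(x_{p+1})\,\mu$; since $\tilde T^{qp} = T^{\gamma}\proj{qp}$ by (\ref{eq33.-1}), the object to bound is $\dnorm{T^{\gamma}\proj{qp}\nu}$, and Corollary \ref{propA5} already gives $\dnorm{\nu}\leq\dnorm{\mu}$. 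The first move is a two-norm split in time: fixing an integer $n$ with $0\leq n<\gamma$ and iterating (\ref{eq17.1}) from time $n$ up to time $\gamma$ yields
\[
\dnorm{T^{\gamma}\proj{qp}\nu}\leq\lambda_0^{\gamma-n}\,\dnorm{T^{n}\proj{qp}\nu}+\tfrac{D_0}{1-\lambda_0}\,\norm{T^{n}\proj{qp}\nu}.
\]
The first term is handled by the crude bound (\ref{eq17.2}) and Lemma \ref{prop8}, and will contribute the $\lambda_0^{\gamma-n}$ and $\lambda_0^{\gamma}$ summands of $\sigma_1$; the real work is the total-variation estimate of $\norm{T^{n}\proj{qp}\nu}$, which must reveal the decay $c\varsigma^{n}$.

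Second, to estimate $\norm{T^{n}\proj{qp}\nu}$ I would replace $T$ by the decoupled dynamic $T_0^{(p)}$, for which coordinate $p$ no longer feels $x_{p+1}$. This is only legitimate while site $p+1$ stays negative, so I insert $\charf{[-1,0]}(x_{p+1})$ at each step and write
\[
\norm{T^{n}\proj{qp}\nu}\leq\norm{(T\charf{[-1,0]}(x_{p+1}))^{n}\proj{qp}\nu}+\norm{\bigl(T^{n}-(T\charf{[-1,0]}(x_{p+1}))^{n}\bigr)\proj{qp}\nu}.
\]
Site $p+1$ can only leave $[-1,0]$ by crossing $E_{\epsilon}$, so a telescopic expansion of the second term, combined with the elementary bound $\norm{\charf{E_{\epsilon}}\rho}\leq\norm{E_{\epsilon}}\dnorm{\rho}$ and the Lasota--Yorke growth of the intermediate iterates, produces a contribution proportional to $\norm{E_{\epsilon}}$. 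For the constrained iterate I then apply Proposition \ref{prop10} with $m=n$ to pass to $(T_0^{(p)}\charf{[-1,0]}(x_{p+1}))^{n}$ at the price of $\tfrac{1+nD_0}{1-\lambda_0}\,2\epsilon\,\dnorm{\proj{qp}\nu}$, and Lemma \ref{prop8} converts $\dnorm{\proj{qp}\nu}$ into $\dnorm{\mu}$. After multiplication by the outer factor $\tfrac{D_0}{1-\lambda_0}$ from the first step, these two errors combine into the $(\tfrac{D_0}{1-\lambda_0})^{2}\tfrac{1+nD_0}{1-\lambda_0}(4\epsilon+\norm{E_{\epsilon}})$ summand of $\sigma_1$.

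Third, there remains the leading decoupled term $\norm{(T_0^{(p)}\charf{[-1,0]}(x_{p+1}))^{n}\proj{qp}\nu}$, where the construction of $\proj{qp}$ is used. Under the decoupled dynamic the coordinate $p$ evolves as the single-site map on $[-1,0]$, independently of the sites to its right, while $\proj{qp}$ isolates exactly the fluctuation in the $x_p$-direction whose average against $\hinv{-}$ vanishes. Feeding this zero-average fluctuation into the single-site convergence estimate (\ref{eq0.2}) over $n$ steps produces the factor $c\varsigma^{n}$ (the bounded variation on the right of (\ref{eq0.2}) being supplied by $\dnorm{\proj{qp}\nu}$), and Lemma \ref{prop8} again turns this into $\dnorm{\mu}$; with the outer $\tfrac{D_0}{1-\lambda_0}$ this yields the $2\tfrac{D_0}{1-\lambda_0}\,c\varsigma^{n}$ summand.

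Collecting the three contributions gives exactly the stated $\sigma_1$. I expect the main obstacle to be the interface between the two norms in the third step: the single-site mixing (\ref{eq0.2}) is only available as an $L^1$/total-variation statement, whereas the conclusion is a bounded-variation contraction, and it is precisely the time-splitting $\gamma=(\gamma-n)+n$ that trades the lost regularity (the $\lambda_0^{\gamma-n}$ factor) for the total-variation decay $c\varsigma^{n}$. A related subtlety, since $\proj{qp}$ and $T$ do not commute and the decoupling only severs the influence of $p+1$ on $p$, is to verify that the fluctuation extracted by $\proj{qp}$ is genuinely the one damped by the decoupled single-site evolution, and that leaving the pure phase really costs a full factor $\norm{E_{\epsilon}}$ and not merely $\epsilon$.
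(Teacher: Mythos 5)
Your proposal follows essentially the same route as the paper's proof: the Lasota--Yorke splitting $\gamma=(\gamma-n)+n$ via (\ref{eq17.1})--(\ref{eq17.2}), the insertion of $\charf{[-1,0]}(x_{p+1})$ with the sign-flip terms costing a factor $\norm{E_{\epsilon}}$ through (\ref{eq8}), the passage to the decoupled dynamic $T_{0}^{(p)}$ via Proposition \ref{prop10} at cost $\epsilon$, and the single-site mixing estimate (\ref{eq0.2}) applied to the zero-average fluctuation created by $\proj{qp}$, all assembled in the same order as in the paper. The one point where your bookkeeping differs is the mixing step: the paper gets $2\,c\,\varsigma^{n}\dnorm{\mu}$ for the decoupled term directly, by pairing against a test function, rewriting the $\proj{qp}$-difference as $\mu\bigl(\charf{[-1,0]}(x_p)\,\partial_p(\cdots)\bigr)$ and applying (\ref{eq0.2}) to $h=\charf{[-1,x_p]}$ (whose BV norm is $2$), whereas routing the bounded variation through $\dnorm{\proj{qp}\nu}$ and Lemma \ref{prop8}, as you describe, would introduce an extra factor $\tfrac{D_0}{1-\lambda_0}$ on the $c\,\varsigma^{n}$ summand and land slightly above the stated $\sigma_1$.
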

\begin{proof}Remember that $\tilde{T}^{qp}$ was defined in (\ref{eq33.-1}) as $T^{\gamma} \proj{qp}$. If we apply the Lasota-Yorke inequality (\ref{eq17.2}) to $T^{\gamma - n}$, for some strictly positive $n < \gamma$, we have:
\begin{align}
& \dnorm{T^{\gamma} \proj{qp} \charf{[-1,0]}(x_p) \charf{[-1,0]}(x_{p+1})\mu}\nonumber\\
\begin{split}
& \qquad \leq \lambda_0^{\gamma - n} \dnorm{T^{n} \proj{qp} \charf{[-1,0]}(x_p) \charf{[-1,0]}(x_{p+1})\mu}\\
& \qquad\qquad + \tfrac{D_0}{1 - \lambda_0}\, \norm{T^{n} \proj{qp} \charf{[-1,0]}(x_p) \charf{[-1,0]}(x_{p+1})\mu}\label{eq45} .
\end{split}
\end{align}
Applying once again the Lasota-Yorke to the first term of this inequality, using Lemma \ref{prop8} and Corollary \ref{propA5}, we get: 
\begin{align*}
& \dnorm{T^{n} \proj{qp} \charf{[-1,0]}(x_p) \charf{[-1,0]}(x_{p+1})\mu}\\
& \qquad \leq \lambda_0^{n} \dnorm{\proj{qp} \charf{[-1,0]}(x_p) \charf{[-1,0]}(x_{p+1})\mu} + \tfrac{D_0}{1 - \lambda_0} \norm{\proj{qp} \charf{[-1,0]}(x_p) \charf{[-1,0]}(x_{p+1})\mu}\\
& \qquad\leq 2\,\tfrac{D_0}{1-\lambda_0}\,\lambda_0^{n} \,\dnorm{\mu} + 2\,\tfrac{D_0}{1 - \lambda_0} \norm{\mu}\\
& \qquad\leq 2\,\tfrac{D_0}{1-\lambda_0}\,\left(1+ \lambda_0^{n}  \right)\, \dnorm{\mu}.
\end{align*}
And so, inequality (\ref{eq45}) becomes:
\begin{align}
& \dnorm{\tilde{T}^{qp} \charf{[-1,0]}(x_p) \charf{[-1,0]}(x_{p+1})\mu} \nonumber\\
\begin{split}
& \qquad \leq 2\,\tfrac{D_0}{1-\lambda_0}\,\left(\lambda_0^{\gamma - n} + \lambda_0^{\gamma}  \right)\, \dnorm{\mu} + \tfrac{D_0}{1 - \lambda_0}\, \norm{T^{n} \proj{qp} \charf{[-1,0]}(x_p) \charf{[-1,0]}(x_{p+1})\mu}\label{eq45.0} .
\end{split}
\end{align}

For the second term in (\ref{eq45.0}), we first note that, by the definition of $\proj{qp}$ in (\ref{eq28}) and the fact that $\hinv{-}$ is concentrated on $[-1,0]$, we have, for any $s \in \ent$ and any measure $\nu$:
\begin{equation}
\proj{qp}\charf{[-1,0]}(x_{s})\nu = \charf{[-1,0]}(x_{s})\proj{qp}\charf{[-1,0]}(x_{s})\nu \label{eq45.0.1}.
\end{equation}
We can therefore introduce an operator $\charf{[-1,0]}(x_{p+1})$ in front of $\proj{qp}$ in the second term of (\ref{eq45.0}):
\begin{equation*}\begin{split}
& \norm{T^{n} \proj{qp} \charf{[-1,0]}(x_p) \charf{[-1,0]}(x_{p+1})\mu}\\
& \qquad =  \norm{T^{n} \charf{[-1,0]}(x_{p+1})\proj{qp} \charf{[-1,0]}(x_p) \charf{[-1,0]}(x_{p+1})\mu}.\end{split}
\end{equation*}
Since $x_{p+1}$ is initially negative, either it stays negative up to time $n$ or there is a sign flip at some intermediate time $k$.
\begin{align}
& \norm{T^{n} \proj{qp} \charf{[-1,0]}(x_p) \charf{[-1,0]}(x_{p+1})\mu}\nonumber\\
\begin{split}&\leq \norm{\left(T \charf{[-1,0]}(x_{p+1})\right)^{n} \proj{qp} \charf{[-1,0]}(x_p) \charf{[-1,0]}(x_{p+1})\mu}\phantom{\sum^{n - 1}}\\
&\quad+ \sum_{k = 1}^{n - 1} \norm{ T^{n - k} \charf{(0,1]}(x_{p+1}) \left(T \charf{[-1,0]}(x_{p+1})\right)^k \proj{qp} \charf{[-1,0]}(x_p) \charf{[-1,0]}(x_{p+1})\mu}\label{eq45.1}.\end{split}
\end{align}
We can then apply Proposition \ref{prop10} to the first term of (\ref{eq45.1}), and replace the initial dynamic by $T_{0}^{(p)} $ up to an error that grows at most linearly with time. This, together with  Lemma \ref{prop8} and Corollary \ref{propA5}, leads to:

\begin{align}
&\norm{\left(T \charf{[-1,0]}(x_{p+1})\right)^{n} \proj{qp} \charf{[-1,0]}(x_p) \charf{[-1,0]}(x_{p+1})\mu}\nonumber \\
& \quad\leq \norm{\Big(T_{0}^{(p)} \charf{[-1,0]}(x_{p+1})\Big)^n \proj{qp} \charf{[-1,0]}(x_p) \charf{[-1,0]}(x_{p+1})\mu}\nonumber\\
& \quad\qquad +  2\, \frac{1 + n \,D_0}{1 - \lambda_0} \,\epsilon\, \dnorm{\proj{qp} \charf{[-1,0]}(x_p) \charf{[-1,0]}(x_{p+1})\mu}\nonumber\\
\begin{split}
& \quad\leq \norm{\Big(T_{0}^{(p)} \charf{[-1,0]}(x_{p+1})\Big)^n \proj{qp} \charf{[-1,0]}(x_p) \charf{[-1,0]}(x_{p+1})\mu}\\
& \quad\qquad + 4\,\frac{D_0}{1-\lambda_0}\ \frac{1 + n \,D_0}{1 - \lambda_0} \,\epsilon\, \dnorm{\mu}.
\label{eq47}
\end{split}
\end{align}

Now that we are left with the decoupled dynamic at site $p$, we can take advantage of the mixing properties of the local dynamic $\tau$ as in \cite{KeLi06}. Indeed, for any measure $\nu$, we see that 
\begin{align*}
& \Big(T_{0}^{(p)} \charf{[-1,0]}(x_{p+1})\Big)^n \charf{[-1,0]}(x_p)\nu(\varphi)\\
& = \nu\left(\,\charf{[-1,0]}(x_p)\, \varphi(T_{0}^{(p)\,n} \vect{x})\ \prod_{t = 0}^{n} \charf{[-1,0]}(T_0^{(p)\,t} x_{p+1})\,\right).
\end{align*}
Here, $\charf{[-1,0]}(T_0^{(p)\,t} x_{p+1})$ does not depend on $x_p$, but only on the variables $x_{>p}$. Moreover, the sign of $x_p$ is initially fixed to be negative, therefore, $T_{0}^{(p)\,n} x_p = \tau^n x_p$ and $T_{0}^{(p)\,n} x_{q}$ for $q \neq p$ depends only on the sign of $x_p$ which is fixed and negative. So, the dynamic is actually the product of two dynamics, $\tau^n$ acting on $x_p$, and $T^n$ acting on $\vect{x}_{\neq p}$ with fixed negative boundary conditions in $x_p$. If we define $T_{\neq p} x_q = T (\vect{x}_{\neq p}, -1)_q$, and remember that $\tau$ preserves the signs, we see that:
\begin{align*}
& \Big(T_{0}^{(p)} \charf{[-1,0]}(x_{p+1})\Big)^n \charf{[-1,0]}(x_p)\nu(\varphi)\\
& = \nu\left(\,\charf{[-1,0]}(\tau^n x_p)\, \varphi(\tau^n x_p , T_{\neq p}^{n} \vect{x}_{\neq p})\ \prod_{t = 0}^{n} \charf{[-1,0]}(T^{t} x_{p+1})\,\right)\\
& \leq \norm{\left(\tau^n \times \charf{\neq p}\right)\nu}\, \norm{\varphi}_{\infty}.
\end{align*}
If we apply this inequality to the first term of (\ref{eq47}), together with (\ref{eq45.0.1}) to create a $\charf{[-1,0]}(x_p)$ in front of $\proj{qp}$, we find:
\begin{align*}
& \norm{\Big(T_{0}^{(p)} \charf{[-1,0]}(x_{p+1})\Big)^n\proj{qp} \charf{[-1,0]}(x_p) \charf{[-1,0]}(x_{p+1})\mu}\\
& \qquad= \norm{\Big(T_{0}^{(p)} \charf{[-1,0]}(x_{p+1})\Big)^n \charf{[-1,0]}(x_p)\proj{qp} \charf{[-1,0]}(x_p) \charf{[-1,0]}(x_{p+1})\mu}\\
&  \qquad\leq \norm{\left(\tau^n \times \charf{\neq p} \right)\proj{qp} \charf{[-1,0]}(x_p) \charf{[-1,0]}(x_{p+1})\mu}.
\end{align*}
But, if $\varphi$ is some continuous function, with $\norm{\varphi}_{\infty} \leq 1$, and if we define $\psi$:
\begin{equation*}
\psi(\vect{x}) =  \charf{[-1,0]}(x_{p+1}) \int \varphi(\xi_{\prec_q p},x_{\succeq_q p}) \prod_{s \prec_q p } \hinv{-}(\xi_s)\ \dif \xi_s ,
\end{equation*}
we see that:
\begin{align*}
& \Bigl(\left(\tau^n \times \charf{\neq p} \right)\proj{qp} \charf{[-1,0]}(x_p) \charf{[-1,0]}(x_{p+1})\mu\Bigr)(\varphi)\\
&\quad = \mu \left(\charf{[-1,0]}(x_p) \left(\psi(\tau^n x_p,\vect{x}_{\neq p}) - \int  \psi(\tau^n \xi_p,\vect{x}_{\neq p}) \hinv{-}(\xi_p)\ \mathrm{d}\xi_p\right) \right)\\
& \quad= \mu \left( \charf{[-1,0]}(x_p) \partial_p \left(  \int_{-1}^{x_p}\psi(\tau^n \xi_p,\vect{x}_{\neq p}) \dif \xi_p - (x_p + 1) \int  \psi(\xi_p,\vect{x}_{\neq p}) \hinv{-}(\xi_p)\ \mathrm{d}\xi_p\right) \right).
\end{align*}
And now, by definition of the bounded variation norm, inequality (\ref{eq0.2}), and the fact that $\norm{\psi}_{\infty} \leq \norm{\varphi}_{\infty} \leq 1$, we have:
\begin{align*}
& \Bigl(\left(\tau^n \times \charf{\neq p} \right)\proj{qp} \charf{[-1,0]}(x_p) \charf{[-1,0]}(x_{p+1})\mu\Bigr)(\varphi)\\
& \quad\leq \dnorm{\mu} \sup_{x_p \in [-1,0]} \norm{ \int_{-1}^{1}\psi(\tau^n \xi_p,\vect{x}_{\neq p})\ \charf{[-1,x_p]}(\xi_p) \ \dif \xi_p - (x_p + 1) \int  \psi(\xi_p,\vect{x}_{\neq p}) \hinv{-}(\xi_p)\ \mathrm{d}\xi_p}_{\infty}\\
&\quad \leq \dnorm{\mu} \sup_{x_p \in [-1,0]} c\, \varsigma^n \dnorm{\charf{[-1,x_p]}}_{BV} \leq 2\, c \, \varsigma^n \dnorm{\mu}.
\end{align*}

We can now go back to (\ref{eq47}). Indeed, we just proved that:
\begin{align*}
& \norm{\Big(T_{0}^{(p)} \charf{[-1,0]}(x_{p+1})\Big)^n\proj{qp} \charf{[-1,0]}(x_p) \charf{[-1,0]}(x_{p+1})\mu}\leq  2\, c \, \varsigma^n \dnorm{\mu}.
\end{align*}
If we insert this bound into (\ref{eq47}), we have:
\begin{align}
&\norm{\left(T \charf{[-1,0]}(x_{p+1})\right)^{n} \proj{qp} \charf{[-1,0]}(x_p) \charf{[-1,0]}(x_{p+1})\mu}\nonumber \\
& \quad\leq \left(2\, c\, \varsigma^n + 4\,\frac{D_0}{1-\lambda_0}\ \frac{1 + n \,D_0}{1 - \lambda_0} \,\epsilon\right)\,\dnorm{\mu}.
\end{align}
And consequently (\ref{eq45.1}) can be rewritten as:
\begin{align}
& \norm{T^{n} \proj{qp} \charf{[-1,0]}(x_p) \charf{[-1,0]}(x_{p+1})\mu}\nonumber\\
\begin{split}&\quad\leq \left(2\, c\, \varsigma^n + 4\,\frac{D_0}{1-\lambda_0}\ \frac{1 + n \,D_0}{1 - \lambda_0} \,\epsilon\right)\,\dnorm{\mu}\phantom{\sum^{n - 1}}\\
&\quad+ \sum_{k = 1}^{n - 1} \norm{ T^{n - k} \charf{(0,1]}(x_{p+1}) \left(T \charf{[-1,0]}(x_{p+1})\right)^k \proj{qp} \charf{[-1,0]}(x_p) \charf{[-1,0]}(x_{p+1})\mu}\label{eq45.1.2}.\end{split}
\end{align}

We are then left with the cases where a sign flip happens at some time $k$. Since we know that at time $k-1$, $x_{p+1}$ belongs to $[-1,0]$, and at time $k$, $x_{p+1}$ belongs to $(0,1]$, $x_{p+1}$ at time $k-1$ has to belong to the small set $E_{\epsilon}$. Therefore, applying (\ref{eq8}) with $\Omega = \varnothing$ and $\Lambda = \{p+1\}$, we get:
\begin{align}
& \norm{ T^{n - k} \charf{(0,1]}(x_{p+1}) \left(T \charf{[-1,0]}(x_{p+1})\right)^k \proj{qp} \charf{[-1,0]}(x_p) \charf{[-1,0]}(x_{p+1})\mu}\nonumber\\
& \quad\leq  \norm{ \charf{E_{\epsilon}}(x_{p+1})\left(T \charf{[-1,0]}(x_{p+1})\right)^{k-1} \proj{qp} \charf{[-1,0]}(x_p) \charf{[-1,0]}(x_{p+1})\mu}\nonumber\\
& \quad\leq \frac{\norm{E_{\epsilon}}}{2} \dnorm{\left(T \charf{[-1,0]}(x_{p+1})\right)^{k-1} \proj{qp} \charf{[-1,0]}(x_p) \charf{[-1,0]}(x_{p+1})\mu}.\label{eq48}
\end{align}
But from (\ref{eq44.1}), we can check that $\lVert\,\left(T \charf{[-1,0]}(x_{p+1})\right)^{k-1} \nu\,\rVert \leq (\lambda_0^{k-1}  + \tfrac{D_0}{1-\lambda_0}) \dnorm{\nu}$. This inequality and Lemma \ref{prop8} applied to (\ref{eq48}) implies:
\begin{align*}
& \norm{ T^{n - k} \charf{(0,1]}(x_{p+1}) \left(T \charf{[-1,0]}(x_{p+1})\right)^k \proj{qp} \charf{[-1,0]}(x_p) \charf{[-1,0]}(x_{p+1})\mu}\nonumber\\
& \qquad\leq \frac{\norm{E_{\epsilon}}}{2} \left(\lambda_0^{k-1} + \frac{D_0}{1-\lambda_0}\right)\,\dnorm{\proj{qp} \charf{[-1,0]}(x_p) \charf{[-1,0]}(x_{p+1})\mu}\\
& \qquad\leq \norm{E_{\epsilon}} \left(\lambda_0^{k-1} + \frac{D_0}{1-\lambda_0}\right)\,\frac{D_0}{1-\lambda_0}\,\dnorm{\mu}.
\end{align*}
We insert this inequality into (\ref{eq45.1.2}), take the geometric series as an upper bound on the sum, and we get:
\begin{align*}
& \norm{T^{n} \proj{qp} \charf{[-1,0]}(x_p) \charf{[-1,0]}(x_{p+1})\mu}\nonumber\\
&\leq \left(2\, c\, \varsigma^n + 4\,\tfrac{D_0}{1-\lambda_0}\ \tfrac{1 + n \,D_0}{1 - \lambda_0} \,\epsilon\right)\,\dnorm{\mu}+ \sum_{k = 1}^{n - 1} \norm{E_{\epsilon}} \left(\lambda_0^{k-1} + \tfrac{D_0}{1-\lambda_0}\right)\,\tfrac{D_0}{1-\lambda_0}\,\dnorm{\mu}\\
& \leq \left(2\, c\, \varsigma^n + \tfrac{D_0}{1-\lambda_0}\ \tfrac{1 + n \,D_0}{1 - \lambda_0} \,\left(4\,\epsilon +  \norm{E_{\epsilon}}\right)\right)\,\dnorm{\mu}.
\end{align*}
And finally, we insert this inequality in (\ref{eq45.0}):
\begin{align*}
& \dnorm{\tilde{T}^{qp} \charf{[-1,0]}(x_p) \charf{[-1,0]}(x_{p+1})\mu} \\
& \leq 2\,\tfrac{D_0}{1-\lambda_0}\,\left(\lambda_0^{\gamma - n} + \lambda_0^{\gamma}  + c \varsigma^n\right)\,\dnorm{\mu}+  \left(\tfrac{D_0}{1-\lambda_0}\right)^2\ \tfrac{1 + n \,D_0}{1 - \lambda_0} \,\left(4\,\epsilon +  \norm{E_{\epsilon}}\right)\,\dnorm{\mu}\\
& \leq \left[2\,\tfrac{D_0}{1-\lambda_0}\,\left(\lambda_0^{\gamma - n} + \lambda_0^{\gamma}  + c \varsigma^n\right)\,+  \left(\tfrac{D_0}{1-\lambda_0}\right)^2\ \tfrac{1 + n \,D_0}{1 - \lambda_0} \,\left(4\,\epsilon +  \norm{E_{\epsilon}}\right)\,\right]\,\dnorm{\mu},
\end{align*}
and by definition of $\sigma_1$, we therefore proved that:
\begin{align*}
& \dnorm{\tilde{T}^{qp} \charf{[-1,0]}(x_p) \charf{[-1,0]}(x_{p+1})\mu} \leq \sigma_1\,\dnorm{\mu}.
\end{align*}
\end{proof}

\subsection{Polymer expansion}
\label{contexp}
We are now at a turning point of our reasoning. Indeed, the Peierls argument from Section \ref{peierls} tells us that the probability of having positive sites is small with respect to some class of initial measures, and Theorem \ref{prop11} allows us to control the dynamic restricted to the negative phase. Combining these two arguments, a contour estimate and a decoupling estimate, is usually called a polymer expansion in Statistical Physics, and we will see that it implies the exponential convergence to equilibrium for a wide class of initial measures.

\begin{theorem}\label{prop12}Assume that $\tau$ belongs to $\mathcal{T}(D_0,c,\varsigma)$ for $D_0, c > 0$ and $\varsigma \in (0,1)$ given and that $\kappa$ is larger than some $\kappa_1$ that depends on $D_0$, $c$ and $\varsigma$. Then, there is some $\epsilon_1 \in (0,1)$ such that, if $\epsilon \in [0,\epsilon_1]$, there is some $\sigma < 1$ such that for any $K >0$ there is some constant $C >0$ such that:
\begin{equation*}
 \norm{T^t \mu(\varphi)} \leq C \norm{\Lambda} \sigma^t \dnorm{\mu} \norm{\varphi}_{\infty}.
\end{equation*}
 for any signed measure of zero mass $\mu$ in $\mathcal{B}(K,3 \alpha_0,\theta_0)$ with $\alpha_0$ defined in (\ref{eq3.3}) and $\theta_0$ defined in (\ref{eq3.3bis}), and for any continuous function $\varphi$ depending only on the variables in $\Lambda \subset \ent$.
\end{theorem}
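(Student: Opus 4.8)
The plan is to exploit the decomposition already established in (\ref{eq33}): writing $t = m\gamma$, the zero mass of $\mu$ gives
$$\norm{T^t\mu(\varphi)} \leq 2\,\norm{\varphi}_\infty\sum_{\vect{p}\in\mathcal{F}(\Lambda)}\norm{\tilde{T}^{p_{m-2}p_{m-1}}\cdots\tilde{T}^{0p_0}\mu},$$
so it suffices to bound each chain $\tilde{T}^{p_{m-2}p_{m-1}}\cdots\tilde{T}^{0p_0}\mu$ and to control the combinatorial sum over $\mathcal{F}(\Lambda)$. First I would recall that, for fixed $p_m\in\Lambda$, each backward step offers $\gamma+1$ choices of $p_{k-1}$, so $\mathcal{F}(\Lambda)$ contains $\norm{\Lambda}(\gamma+1)^m$ paths; the factor $\norm{\Lambda}$ in the final bound comes from this initial count.

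Next I would carry out the polymer expansion announced in the section title. In front of the measure entering each macro-step $\tilde{T}^{p_{k-1}p_k}=T^\gamma\proj{p_{k-1}p_k}$ I insert the partition $1 = \charf{[-1,0]}(x_{p_k})\charf{[-1,0]}(x_{p_k+1}) + \big(1-\charf{[-1,0]}(x_{p_k})\charf{[-1,0]}(x_{p_k+1})\big)$. On the first piece Theorem \ref{prop11} applies and supplies a bounded-variation contraction by $\sigma_1$; on the second piece a positive site sits at $p_k$ or $p_k+1$, which I call a defect. Expanding the resulting product over all $m$ macro-steps organizes the chain into stretches of pure negative phase, each contributing a factor $\sigma_1$ per step, separated by maximal connected space-time clusters of defects — the polymers — with the operator bounds of Lemma \ref{prop8} used to pass characteristic functions across the projectors via (\ref{eq45.0.1}).

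The weight of a single polymer is where the Peierls argument of Section \ref{peierls} re-enters. A polymer is a connected region carrying positive sites, so its indicator is dominated by a product of $\charf{(0,1]}$ factors over its support, and Theorem \ref{prop7} bounds its $\theta_0$-weight by a constant times $(9\alpha_0)^{s}$, where $s$ measures the polymer's spatial extent; this is legitimate because $\mu\in\mathcal{B}(K,3\alpha_0,\theta_0)$, this class is preserved by the dynamics (Corollary \ref{prop4}), and I will take $\kappa_1$ large enough that $\alpha_0<\tfrac1{81}$, so that the hypothesis $\alpha=3\alpha_0<\tfrac1{27}$ of Theorem \ref{prop7} holds and $9\alpha_0<\tfrac19$. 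Because the number of polymer shapes of spatial size $s$ grows only exponentially (the same entropic count of outer paths as in the proof of Theorem \ref{prop7}), the decay $(9\alpha_0)^{s}$ dominates and the sum over polymer configurations converges to a finite constant $C=C(K)$, while the polymer-free configuration contributes exactly $\sigma_1^m$ to each path.

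Collecting the estimates gives a bound of the form $C\,\norm{\Lambda}\,\big((\gamma+1)\sigma_1\big)^m\,\dnorm{\mu}\,\norm{\varphi}_\infty$, so it remains to arrange $(\gamma+1)\sigma_1<1$. Taking $n=\lfloor\gamma/2\rfloor$ in the definition of $\sigma_1$ makes the terms $\lambda_0^{\gamma-n}$, $\lambda_0^\gamma$ and $c\varsigma^n$ decay exponentially in $\gamma$; one first fixes $\gamma$ large, which forces $\kappa>\kappa_1$ so that $\lambda_0=2/\kappa$ is small enough, and then $\epsilon\le\epsilon_1$ small to suppress the remaining $(4\epsilon+\norm{E_\epsilon})$ contribution. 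Setting $\sigma=\big((\gamma+1)\sigma_1\big)^{1/\gamma}<1$ and absorbing the times $t$ not divisible by $\gamma$ into $C$ through the uniform bound (\ref{eq17.2}) then yields the claim. The main obstacle is the mismatch of norms between the two ingredients: Theorem \ref{prop11} contracts in $\dnorm{\cdot}$, whereas the defect weights are controlled only in the $\theta_0$-norm, so the delicate step is to show that after each polymer one may resume the bounded-variation contraction without degrading the rate, and that the three competing exponential rates — polymer weight $9\alpha_0$, polymer entropy, and the path branching $(\gamma+1)$ against $\sigma_1$ — can be made to cooperate under a single consistent choice of $\gamma$, $n$, $\kappa$ and $\epsilon$.
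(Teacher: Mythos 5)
Your overall architecture --- the expansion (\ref{eq33}), a partition of unity over the signs at the sites $p_k,p_k+1$, the contraction of Theorem \ref{prop11} on negative stretches, Peierls weights on positive regions, and the final tuning of $\gamma$, $n$, $\kappa$, $\epsilon$ --- is the same as the paper's, but the step you yourself flag as ``delicate'' is precisely where your argument breaks, and the justification you give for it is incorrect. You claim that the class $\mathcal{B}(K,3\alpha_0,\theta_0)$ ``is preserved by the dynamics (Corollary \ref{prop4})''. Corollary \ref{prop4} only yields $\tnorm{T\mu}{\theta_0}\leq\tnorm{\mu}{\theta_0}$; membership in $\mathcal{B}(K,\alpha,\theta_0)$ additionally requires the geometric decay $\tnorm{\charf{(0,1]^{\Lambda}}\mu}{\theta_0}\leq K\alpha^{\norm{\Lambda}}$ for \emph{every} finite $\Lambda$, and the only available statement of that type is Theorem \ref{prop7} itself, which degrades the rate: starting from $\alpha=3\alpha_0$ it returns $\alpha'=3\max\{\alpha_0,3\alpha_0\}=9\alpha_0$, a second application returns $27\alpha_0$, and after finitely many iterations the hypothesis $\alpha<\tfrac{1}{27}$ fails. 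Since your polymers are distributed over $m=t/\gamma\to\infty$ macro-steps, you cannot weight each polymer by applying Theorem \ref{prop7} to the measure current at the time that polymer begins. The situation is in fact worse: a polymer that follows a pure-phase stretch is entered with control only on $\dnorm{\cdot}$ (that is all Theorem \ref{prop11} delivers), and $\dnorm{\cdot}$ does not dominate $\tnorm{\cdot}{\theta_0}$ (it controls $\var{\{p\}}$ but no $\var{\Lambda}$ with $\norm{\Lambda}\geq 2$), let alone the class membership needed to launch a cluster estimate. So the interleaving ``contract in BV, pay a polymer weight, resume BV'' has no single norm in which the two ingredients can be chained, and this is the crux, not a technicality to be deferred.

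The paper's proof avoids interleaving altogether, which is why it works. After expanding over all sign configurations $\Omega\subseteq G(\vect{p})$, it splits the \emph{terms} into two disjoint families. If at least $m/2$ macro-times have both sites negative, only the BV machinery is used (Theorem \ref{prop11} on those steps, uniform boundedness on the others), giving $(\mathrm{const}\cdot\sigma_1)^{m/2}\dnorm{\mu}$ with $\dnorm{\mu}\leq K\theta_0$. If at least $m/2$ macro-times carry a positive site, only the $\theta_0$-machinery is used: the intermediate cluster inequality (\ref{eq25.-1}) --- which, unlike Theorem \ref{prop7}, makes no class assumption on the measure it is applied to, but simply pushes the indicator functions backward in time --- is iterated through \emph{every} macro-step, with Lemma \ref{prop8} to move the projectors $\proj{p_{k-1}p_k}$ aside at the cost of a factor $2$ per step; the per-step clusters concatenate into one space-time cluster $\Gamma$ whose boundary has size of order at least $m$, and the hypothesis $\mu\in\mathcal{B}(K,3\alpha_0,\theta_0)$ is invoked exactly once, at time $0$, producing the bound $(18\sqrt{\alpha_0})^{m}$ of (\ref{eq63}). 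If you want to keep your polymer picture, you must likewise propagate every polymer's indicator functions all the way back to the initial measure rather than weigh the polymer in situ; as written, your proposal has a genuine gap at its central step.
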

\begin{proof}
Assume for the beginning that $t = m \gamma$ for some $m \in \nat$, and let $t_k = k \gamma$. The expansion of (\ref{eq33}) gives us:
\begin{equation}\label{eq57.1}
\norm{T^t \mu (\varphi)}\leq 2\,\sum_{\vect{p} \in \mathcal{F}(\Lambda)} \norm{\tilde{T}^{p_{m-2} p_{m-1}}\ldots \tilde{T}^{0 p_{0}}\mu}\,\norm{\varphi}_{\infty}.
\end{equation}
We define  and, for any $\vect{p} \in \mathcal{F}(\Lambda)$:
\begin{equation*}
G(\vect{p}) = \{(p_0,t_{0}), (p_{0} +1,t_{0}), \ldots, (p_{m-1},t_{m-1}), (p_{m-1} +1,t_{m-1}) \}.
\end{equation*}
If $\mathcal{P}(G(\vect{p}))$ is the set of subsets of $G(\vect{p})$, then, for any $\Omega \in \mathcal{P}(G(\vect{p}))$ and for any $t$, we can define $\Omega_t = \{q \, | \,(q,t) \in \Omega\}$ , the projection of $\Omega$ on time $t$. Then, we introduce the set $F(\Omega,t)$, defined by:
\begin{equation}
\charf{F(\Omega,t)} = \charf{(0,1]^{\Omega_t}} \,\prod_{q\,:\, (q,t) \in G(\vect{p}) \setminus \Omega} \charf{[-1,0]}(x_q)\label{eq57}.
\end{equation}
If we sum over all possibilities, (\ref{eq57.1}) can be rewritten as:
\begin{equation}\label{eq58}
\norm{T^t \mu (\varphi)}\leq 2\,\sum_{\vect{p} \in \mathcal{F}(\Lambda)}\, \sum_{\Omega \in \mathcal{P}(G(\vect{p}))} \norm{\tilde{T}^{p_{m-2} p_{m-1}}\charf{F(\Omega,t_{m-1})}\ldots \tilde{T}^{0 p_{0}}\charf{F(\Omega,0)}\mu}\,\norm{\varphi}_{\infty}.
\end{equation}
We note that the number of terms in the sums grows at most exponentially with $t$. Indeed, $\norm{\mathcal{F}(\Lambda)} \leq \norm{\Lambda}\, \gamma^m$ and $\norm{\mathcal{P}(G(\vect{p}))} \leq 2^{2 m}$, so
\begin{equation}\label{eq58.1}
\norm{\mathcal{F}(\Lambda)}\,\norm{\mathcal{P}(G(\vect{p}))} \leq \norm{\Lambda} \,(4 \gamma)^{\frac{t}{\gamma}}.
\end{equation}

Consider now $\mathcal{P}'$, the set of the $\Omega \in \mathcal{P}(G(\vect{p}))$ such that at least $\frac{m}{2}$ of the $m$ sets $\Omega_{t_k}$ are empty. If $\Omega$ belongs to $\mathcal{P}'$, we know that at least $\frac{m}{2}$ of the $m$ operators $\tilde{T}^{p_{k} p_{k+1}}\charf{F(\Omega,t_{k})}$ are bounded in bounded variation norm by Theorem \ref{prop11}:
\begin{equation*}\Omega_{t_k} = \varnothing \Rightarrow \dnorm{\tilde{T}^{p_{m-2} p_{m-1}}\charf{F(\Omega,t_{m-1})} \nu} \leq {\sigma_1} \dnorm{\nu},
\end{equation*}
and the other operators $\tilde{T}^{p_{k} p_{k+1}}\charf{F(\Omega,t_{k})}$ are bounded by (\ref{eq17.2}), Lemma \ref{prop8} and Corollary \ref{propA5}:
\begin{equation*}\Omega_{t_k} \neq \varnothing \Rightarrow \dnorm{\tilde{T}^{p_{m-2} p_{m-1}}\charf{F(\Omega,t_{m-1})} \nu} \leq 2\  \left(\tfrac{D_0}{1- \lambda_0}\right) \ \left(\lambda_0 +\tfrac{D_0}{1-\lambda_0}\right) \dnorm{\nu}.
\end{equation*}
Therefore, for any $\Omega \in \mathcal{P}'$, using the fact that $\mu \in \mathcal{B}(K,3\alpha_0,\theta_0)$, we have:
\begin{align}
& \norm{\tilde{T}^{p_{m-2} p_{m-1}}\charf{F(\Omega,t_{m-1})}\ldots \tilde{T}^{0 p_{0}}\charf{F(\Omega,0)}\mu}\nonumber\\
&\qquad \leq \left(2\  \left(\tfrac{D_0}{1- \lambda_0}\right) \ \left(\lambda_0 +\tfrac{D_0}{1-\lambda_0}\right)\right)^{\frac{m}{2}}\, \sigma_{1}^{\frac{m}{2}}\,\dnorm{\mu}\nonumber\\
&\qquad \leq \left[\left(2\  \left(\tfrac{D_0}{1- \lambda_0}\right) \ \left(\lambda_0 +\tfrac{D_0}{1-\lambda_0}\right)\right)\, \sigma_{1}\right]^{\frac{m}{2}}\,K \theta_0\label{eq59}.
\end{align}

If $\Omega$ does not belong to $\mathcal{P}'$, we know that we have at least  $ \frac{m}{2}$ characteristics functions of $(0,1]$, and we will use the Peierls argument to show that this only happens with small probability. We start by defining the sequence of measures $\mu_k$ by:
\begin{equation}\label{eq60}
\begin{cases}
& \mu_0 = \charf{F(\Omega,0)}\mu\\
& \mu_k =  \charf{F(\Omega,t_k)}\tilde{T}^{p_{k-2} p_{k-1}}\charf{F(\Omega,0)}\mu_{k -1} 
\end{cases}\end{equation}
Then, using Lemma \ref{prop8}, we can see that:
\begin{equation}
\norm{\tilde{T}^{p_{m-2} p_{m-1}}\charf{F(\Omega,t_{m-1})}\ldots \tilde{T}^{0 p_{0}}\charf{F(\Omega,0)}\mu} = \norm{\tilde{T}^{p_{m-2} p_{m-1}}\mu_{m-1}} \leq 2 \tnorm{\mu_{m-1}}{\theta_0}
\end{equation}
Assume that we already picked up some $\Lambda_k \subset \ent$ and consider $\vvvert \charf{(0,1]^{\Lambda_k}} \mu_k\vvvert_{\theta_0}$. By definition of $\mu_k$, and since $F(\Omega,t_k) \subset (0,1]^{\Omega_{t_k}}$, we get:
\begin{align*}
\tnorm{\charf{(0,1]^{\Lambda_k}} \mu_k}{\theta_0} & \leq \tnorm{\charf{(0,1]^{\Lambda_k}}\charf{F(\Omega,t_k)} \tilde{T}^{p_{k-2} p_{k-1}}  \mu_{k-1}}{\theta_0}\\
& \leq \tnorm{\charf{(0,1]^{\Lambda_k \cup \Omega_{t_k}}}\tilde{T}^{p_{k-2} p_{k-1}}\mu_{k-1}}{\theta_0}
\end{align*}
We can now apply inequality (\ref{eq25.-1}) to the measure $\mu_{k-1}$:
\begin{align}
& \tnorm{\charf{(0,1]^{\Lambda_k}}\mu_k}{\theta_0}\leq \sum_{\Gamma^{\scriptscriptstyle(k)} \in \mathcal{G}(\Lambda_k \cup \Omega_{t_k})} {\alpha_0}^{\norm{\partial \Gamma^{\scriptscriptstyle(k)}} - \norm{\partial \Gamma^{\scriptscriptstyle(k)}_{t_{k-1}}}} \Big\vvvert \charf{(0,1]^{\scriptstyle \partial \Gamma^{(k)}_{t_{k-1}}}}\proj{p_{k-1}p_{k}}\mu_{k-1}\Big\vvvert_{\theta_0}\nonumber
\end{align}
If we define $\Lambda_{k-1} = \{ q\, |\, (q,t_{k-1}) \in \Gamma^{\scriptscriptstyle(k)}\}$, we then find:
\begin{align}
& \tnorm{\charf{(0,1]^{\Lambda_k}}\mu_k}{\theta_0} \leq \sum_{\Gamma^{\scriptscriptstyle(k)} \in \mathcal{G}(\Lambda_k \cup \Omega_{t_k})}{\alpha_0}^{\norm{\partial\Gamma^{\scriptscriptstyle(k)}} - \norm{\Lambda_{k-1}}} \, \tnorm{\charf{(0,1]^{\Lambda_{k-1}}}\proj{p_{k-1}p_{k}}\mu_{k-1}}{\theta_0}\nonumber
\end{align}
Now, remember that the operator $\proj{qp}$ only integrates over $[-1,0]$. Therefore:
\begin{equation*}\charf{(0,1]^{\Lambda_{k-1}}}\proj{p_{k-1}p_{k}} =  \charf{(0,1]^{\Lambda_{k-1}}}\proj{p_{k-1}p_{k}} \charf{(0,1]^{\Lambda_{k-1}}},
\end{equation*}
and using Corollary \ref{propA5} and Lemma \ref{prop8}, we get:
\begin{align}
 \tnorm{\charf{(0,1]^{\Lambda_k}}\mu_k}{\theta_0} & \leq \sum_{\Gamma^{\scriptscriptstyle(k)} \in \mathcal{G}(\Lambda_k \cup \Omega_{t_k})}{\alpha_0}^{\norm{\partial\Gamma^{\scriptscriptstyle(k)}} - \norm{\Lambda_{k-1}}} \, \tnorm{\proj{p_{k-1}p_{k}}\charf{(0,1]^{\Lambda_{k-1}}}\mu_{k-1}}{\theta_0}\nonumber\\
&\leq 2\, \sum_{\Gamma^{\scriptscriptstyle(k)} \in \mathcal{G}(\Lambda_k \cup \Omega_{t_k})}{\alpha_0}^{\norm{\partial\Gamma^{\scriptscriptstyle(k)}} - \norm{\Lambda_{k-1}}} \, \tnorm{\charf{(0,1]^{\Lambda_{k-1}}}\mu_{k-1}}{\theta_0}\label{eq61}.
\end{align}
We can iterate this inequality to find an upper bound on $\tnorm{\mu_{m-1}}{\theta_0}$. Starting from $\Lambda_{m-1} = \emptyset$:
\begin{align}
& \tnorm{\mu_{m-1}}{\theta_0} \leq 2\, \sum_{\Gamma^{\scriptscriptstyle(m-1)} \in \mathcal{G}(\Omega_{t_{m-1}})}{\alpha_0}^{\norm{\partial\Gamma^{\scriptscriptstyle(m-1)}} - \norm{\Lambda_{m-2}}} \, \tnorm{\charf{(0,1]^{\Lambda_{m-2}}}\mu_{m-2}}{\theta_0}\nonumber\\
\begin{split}
& \quad \leq 2^{m-1} \sum_{\Gamma^{\scriptscriptstyle(m-1)} \in \mathcal{G}(\Omega_{t_{m-1}})}\ldots\sum_{\Gamma^{\scriptscriptstyle(1)} \in \mathcal{G}(\Omega_{t_{1}}\cup \Lambda_1)}
{\alpha_0}^{\norm{\partial\Gamma^{\scriptscriptstyle(m-1)}} - \norm{\Lambda_{m-2}}}\ldots{\alpha_0}^{\norm{\partial\Gamma^{\scriptscriptstyle(1)}} - \norm{\Lambda_{0}}} \\
&  \,\qquad\qquad\qquad\qquad\qquad\qquad\qquad\qquad\qquad\qquad\qquad \tnorm{\charf{(0,1]^{\Lambda_{0}}}\mu_{0}}{\theta_0}\label{eq61.1}
\end{split}
\end{align}
But we can also see that:
\begin{equation*}
\tnorm{\charf{(0,1]^{\Lambda_{0}}}\mu_{0}}{\theta_0} = \tnorm{\charf{(0,1]^{\Lambda_{0}}}\charf{F(\Omega,0)}\mu}{\theta_0}
\leq  \tnorm{\charf{(0,1]^{\Omega_0 \cup \Lambda_0}}\mu}{\theta_0}.
\end{equation*}
For the convenience, let us define $\Gamma^{\scriptscriptstyle(0)} = \Omega_0 \cup \Lambda_0 \times \{0\}$ and $\Gamma = \bigcup_{k = 0}^{m-1} \Gamma^{\scriptscriptstyle (k)}$, and let $\sum_{\Gamma}$ denote the sum over all $\Gamma^{\scriptscriptstyle (k)}$ from the previous inequality. We can check that:
\begin{equation*}
\sum_{t = t_{k-1}+1}^{t_k} \norm{\partial \Gamma_t} = \norm{\partial\Gamma^{\scriptscriptstyle(k)}} - \norm{\Lambda_{k-1}}
\end{equation*}
And so:
\begin{equation*}
\sum_{k = 1}^{m-1} (\norm{\partial\Gamma^{\scriptscriptstyle(k)}} - \norm{\Lambda_{k-1}})= \sum_{t = 1}^{t_{m-1}} \norm{\partial \Gamma_t} = \norm{\partial \Gamma} - \norm{\partial \Gamma_0}.\end{equation*}
Since $\Omega_0 \cup \Lambda_0 = \partial \Gamma_0$, (\ref{eq61.1}) then becomes:
\begin{align}
\tnorm{\mu_{m-1}}{\theta_0} \leq 2^{m-1} \sum_{\Gamma} {\alpha_0}^{\norm{\partial\Gamma} - \norm{\partial \Gamma_0}}  \, \tnorm{\charf{(0,1]^{\partial \Gamma_0}} \mu}{\theta_0}
\end{align}
We can now use the assumption that $\mu$ belongs to $\mathcal{B}(K,3 \alpha_0,\theta_0)$, and for any $\Omega \in \mathcal{P}'$, we get:
\begin{align*}
\norm{\tilde{T}^{p_{m-2} p_{m-1}}\charf{F(\Omega,t_{m-1})}\ldots \tilde{T}^{0 p_{0}}\charf{F(\Omega,0)}\mu} & \leq 2^m \sum_{\Gamma} {\alpha_0}^{\norm{\partial\Gamma} - \norm{\partial \Gamma_0}}  \, \tnorm{\charf{(0,1]^{\partial \Gamma_0}} \mu}{\theta_0}\\
&  \leq K\,2^m \sum_{\Gamma} {\alpha_0}^{\norm{\partial\Gamma} - \norm{\partial \Gamma_0}} (3 \alpha_0)^{\norm{\partial \Gamma_0}}
\end{align*}
Consider now the outer paths associated to the cluster $\Gamma$. We can check that the number of diagonal, vertical and horizontal edges have to be equal and that if $\Gamma$ has $c$ connected parts, $n_d = n_v = n_h \geq \frac{m}{2} - c$. Since $\norm{\partial \Gamma} \geq n_h + c$ from (\ref{eq7}) still holds, we can see that, as long as $\alpha_0 < \frac{1}{81}$, we have:
\begin{align}
\norm{\tilde{T}^{p_{m-2} p_{m-1}}\charf{F(\Omega,t_{m-1})}\ldots \tilde{T}^{0 p_{0}}\charf{F(\Omega,0)}\mu} & \leq  K\,2^m \sum_{c = 1}^{\infty}\,\sum_{n_h = \frac{m}{2} - c}^{\infty}  3^{3 n_h}{(3\alpha_0)}^{n_h + c}\nonumber\\
& \leq \frac{K}{26 (1 - 81 \alpha_0)} \;2^m\; (81 \alpha_0)^{\frac{m}{2}}\nonumber\\
& \leq \frac{K}{26 (1 - 81 \alpha_0)} \; (18\,\sqrt{\alpha_0})^{m} \label{eq63}.
\end{align}

We can now move to the conclusion of this proof. We need to choose $\sigma \in (0,1)$, $\gamma$ and $n$ and the parameters of the model (namely $\kappa$ and $\epsilon$) such that:
\begin{equation}\begin{cases}
&{\displaystyle\alpha_0 < \frac{1}{81}}\\
&{\displaystyle\rule{0pt}{14pt} 18\,\sqrt{\alpha_0} < \frac{\sigma^{\gamma}}{4 \gamma}}\\
&{\displaystyle \left[\left(2\  \left(\tfrac{D_0}{1- \lambda_0}\right) \ \left(\lambda_0 +\tfrac{D_0}{1-\lambda_0}\right)\right)\, \sigma_{1}\right]^{\frac{1}{2}}< \frac{\sigma^{\gamma}}{4 \gamma}}
\end{cases}\end{equation}
We start by choosing $\sigma \in (0,1)$, $\gamma$ and $n$ independently of $\kappa$ such that:
\begin{equation*}
\left[\left(2\  \left(\tfrac{D_0}{1- \varsigma}\right) \ \left(\varsigma +\tfrac{D_0}{1-\varsigma}\right)\right) 2\,\tfrac{D_0}{1-\varsigma}\,\left(\varsigma^{\gamma - n} + \varsigma^{\gamma}  + c \varsigma^n\right)\,\right]^\frac{1}{2} \leq \frac{\sigma^{\gamma}}{4 \gamma}
\end{equation*}
We have seen that we can always choose $\varsigma$ such that $\lambda_0 \leq \varsigma$, and by definition of $\sigma_1$, it implies that:
\begin{equation*}
\lim_{\epsilon \to 0} \left[\left(2\  \left(\tfrac{D_0}{1- \lambda_0}\right) \ \left(\lambda_0 +\tfrac{D_0}{1-\lambda_0}\right)\right)\, \sigma_{1}\right]^{\frac{1}{2}} \leq  \frac{\sigma^{\gamma}}{4 \gamma}
\end{equation*}
Using (\ref{eq21.-1}), we now choose $\kappa_1 > 0$ such that, if $\kappa \geq \kappa_1$, the following inequalities are satisfied:
\begin{equation}\begin{cases}
&{\displaystyle \lim_{\epsilon \to 0} \alpha_0 < \frac{1}{81}}\\
&{\displaystyle\rule{0pt}{14pt} \lim_{\epsilon \to 0} 18\,\sqrt{\alpha_0} < \frac{\sigma^{\gamma}}{4 \gamma}}.
\end{cases}\end{equation}

Under these assumptions, there is some $\epsilon_1 \in (0,1)$ such that, if $\epsilon < \epsilon_1$, inequalities (\ref{eq59}) and (\ref{eq63}) can be rewritten as:
\begin{align*}
& \norm{\tilde{T}^{p_{m-2} p_{m-1}}\charf{F(\Omega,t_{m-1})}\ldots \tilde{T}^{0 p_{0}}\charf{F(\Omega,0)}\mu}\nonumber\\
& \qquad\qquad\leq K \theta_0 \left[\left(2\  \left(\tfrac{D_0}{1- \lambda_0}\right) \ \left(\lambda_0 +\tfrac{D_0}{1-\lambda_0}\right)\right)\, \sigma_{1}\right]^{\frac{m}{2}} \leq K \theta_0\, \frac{\sigma^t}{(4 \gamma)^{m}}\\
& \norm{\tilde{T}^{p_{m-2} p_{m-1}}\charf{F(\Omega,t_{m-1})}\ldots \tilde{T}^{0 p_{0}}\charf{F(\Omega,0)}\mu}\nonumber\\
&\qquad\qquad\leq \frac{K}{26 (1 - 81 \alpha_0)} \; (18\,\sqrt{\alpha_0})^{m} \leq \frac{K}{26 (1 - 81 \alpha_0)} \,\frac{\sigma^t}{(4 \gamma)^m}
\end{align*}
Therefore, for some constant $C$, we have, for any $\Omega$:
\begin{equation*}
\norm{\tilde{T}^{p_{m-1} p_{m}}\charf{F(\Omega,t_{m-1})} \ldots \tilde{T}^{p_0 p_1}\charf{F(\Omega,0)} \proj{0 p_{0}}\mu} \leq C\ (4 \gamma)^{-m} \ \sigma^t .
\end{equation*}
With this estimate, we control all the terms of in the sum of (\ref{eq58}), and since the number of terms in the sum is controlled by (\ref{eq58.1}), we get:
\begin{equation}\label{eq63.1}
\norm{T^t \mu (\varphi)} \leq \norm{\Lambda}\,(4 \gamma)^{\frac{t}{\gamma}}\ C\ K\ (4 \gamma)^{- \frac{t}{\gamma}} \ \sigma^t \leq C\  \norm{\Lambda} \sigma^t \,\norm{\varphi}_{\infty}.
\end{equation}

Finally, if $t$ is not a multiple of $\gamma$, we know that $t$ can be rewritten as $t = m \gamma + t'$, for some $m \in \nat$ and $t' < \gamma$. Then, we can apply (\ref{eq63.1}) to $\mu$ and $\varphi \circ T^{t'}$, which depends only on the variables in a set of size at most $t' \norm{\Lambda}$, and we find:
\begin{equation}
\norm{T^t \mu (\varphi)} \leq  C \ t'\,\norm{\Lambda} \sigma^{m\gamma} \,\norm{\varphi}_{\infty} \leq \frac{\gamma C }{\sigma^{\gamma}} \,\norm{\Lambda}\, \sigma^t\,\norm{\varphi}_{\infty}
\end{equation}
which is exactly the promised result, up to a redefinition of the constant $C$.\end{proof}

Theorem \ref{result2} is now a trivial consequence of Theorem \ref{prop12}. 

\begin{proof}[Theorem \ref{result2}]
We already know from Theorem \ref{result1} that $\muinv{-}$ belongs to $\mathcal{B}(K_0,3 \alpha_0,\theta_0)$. And since $\mu$ belongs to $\mathcal{B}(K,3 \alpha_0,\theta_0)$.
\begin{equation*}
\tnorm{\charf{(0,1]^{\Lambda}}(\mu - \muinv{-})}{\theta_0} \leq K \, (3 \alpha_0)^{\norm{\Lambda}} \norm{\mu} + K_0 {(3 \alpha_0)}^{\norm{\Lambda}} \norm{\muinv{-})} \leq \left(K + K_0\right)\,(3 \alpha_0)^{\norm{\Lambda}}.
\end{equation*}
So, for $K' = K + K_0$, $(\mu - \muinv{-})$ is a signed measure of zero mass in $\mathcal{B}(K',3\alpha_0,\theta_0)$, and Theorem \ref{prop12} implies that:
\begin{equation*}
\norm{\mu(\varphi \circ T) - \muinv{-}(\varphi)} \leq C\ \norm{\Lambda}\ \sigma^t \ \norm{\varphi}_{\infty} .
\end{equation*}
\end{proof}

\section{Exponential Decay of Correlations}
\label{expdecay}

Theorem \ref{result2} implies that the spatial correlations of $\muinv{-}$ decay exponentially:
\begin{proof}[Proposition \ref{result3}]
Since $\sleb{-}$ belongs to $\mathcal{B}(1,3 \alpha_0,\theta_0)$, we know that for any continuous function $\varphi$ depending only on the variables in $\Sigma$:
\begin{equation}\label{eq65}
\norm{\sleb{-}( \varphi \circ T^t) - \muinv{-}(\varphi)} \leq C'\ \norm{\Sigma}\ \sigma^t \norm{\varphi}_{\infty}.
\end{equation}

If we choose $t = d(\Lambda,\Omega) - 1$, $\varphi \circ T^t$ and $\psi \circ T^t$ depends on different variables, and since $\sleb{-}$ is a totally decoupled, we have $\sleb{-}( \varphi \circ T^t\, \psi \circ T^t  ) = \sleb{-}(\varphi \circ T^t )\,\sleb{-}( \psi \circ T^t )$. Therefore:
\begin{align*}
& \norm{\,\muinv{-}( \varphi \psi ) - \muinv{-}(\varphi)\, \muinv{-}(\psi)\,}\\
& \qquad  \leq \norm{\,\muinv{-}( \varphi \psi ) - T^t \sleb{-}(\varphi \psi)\,} + \norm{\,T^t \sleb{-}(\varphi \psi) - \muinv{-}(\varphi)\, T^t\sleb{-}(\psi)\,}\\
& \qquad \qquad + \norm{\,\muinv{-}(\varphi)\, T^t\sleb{-}(\psi) -   \muinv{-}(\varphi)\, \muinv{-}(\psi)\,}\\
& \qquad \leq  \norm{\,\muinv{-}( \varphi \psi ) - T^t \sleb{-}(\varphi \psi)\,} + \norm{\,T^t \sleb{-}(\varphi) - \muinv{-}(\varphi)\,} \norm{\,T^t\sleb{-}(\psi)\,} \\
& \qquad \qquad + \norm{\,\muinv{-}(\varphi) \,T^t\sleb{-}(\psi) -   \muinv{-}(\varphi) \,\muinv{-}(\psi)\,}.
\end{align*}
This inequality, together with (\ref{eq65}), yields:
\begin{align*}
& \norm{\,\muinv{-}( \varphi \psi ) - \muinv{-}(\varphi) \,\muinv{-}(\psi)\,}\\
& \qquad  \leq  C'\ \norm{\Lambda \cup \Omega} \sigma^t \norm{\varphi}_{\infty} \norm{\psi}_{\infty}+ C'\ \norm{\Lambda} \sigma^t \norm{\varphi}_{\infty} \norm{\psi}_{\infty} + C'\ \norm{\Omega} \sigma^t \norm{\varphi}_{\infty} \norm{\psi}_{\infty}\\
&\qquad \leq  2 \ C'\ \norm{\Lambda \cup \Omega} \sigma^t \norm{\varphi}_{\infty} \norm{\psi}_{\infty}.
\end{align*}
If we take $C = 2 C'$, we have the exponential decay of correlation in space for the invariant measure $\muinv{-}$.
\qquad$\qed$\end{proof}

The proof of the exponential decay in time also follows from standard arguments, but we have to put some additional regularity assumptions on $\psi$ because $\psi \muinv{-}$ has to belong to $\mathcal{B}(K,3\alpha_0,\theta_0)$.

\begin{proof}[Proposition \ref{result4}]
Without loss of generality, assume that $\norm{\varphi}_{\infty} \leq 1$ and $\norm{\psi}_{\infty} \leq 1$. Consider then the measure $\mu_{\psi}$ defined by:
\begin{equation*}
\mu_{\psi}(\varphi) \equiv \muinv{-}(\varphi\,\psi) - \muinv{-}(\varphi)\, \muinv{-}(\psi).
\end{equation*}
In order to prove the exponential decay in time, we just have to prove that $\mu_{\psi}$ satisfies the assumptions of Theorem \ref{prop12}. It is easy to check that $\mu_{\psi}$ is a signed measure of zero mass. And since $\muinv{-}(\varphi)\, \muinv{-}$ already belongs to $\mathcal{B}(K_0,3\alpha_0,\theta_0)$, we just have to prove that $\psi\muinv{-}$ also belongs to $\mathcal{B}(K,3\alpha_0,\theta_0)$ for some $K > 0$.

Since $\psi$ depends only on the variables inside some finite set $\Lambda \subset \ent$ and belongs to $\mathcal{C}_{\Lambda}^1(X)$, we have:
\begin{equation*}
\psi \partial_{\tilde{\Lambda}} \tilde{\varphi} = \sum_{V \subseteq \tilde{\Lambda}}\,(-1)^{\norm{\tilde{\Lambda} \setminus V}}\, \partial_V \left( \tilde{\varphi} \partial_{\tilde{\Lambda} \setminus V} \psi\right),
\end{equation*}
for any $\tilde{\Lambda} \subseteq \ent$ and any $ \tilde{\varphi} \in \mathcal{C}_{\tilde{\Lambda}}^1(X)$. Therefore:
\begin{align*}
 \psi\,\partial_{\tilde{\Lambda}}  \tilde{\varphi} & = \partial_{\tilde{\Lambda} \setminus \Lambda} \left( \psi\,\partial_{\tilde{\Lambda} \cap \Lambda}  \tilde{\varphi}\right)\\
 & = \sum_{V \subseteq \tilde{\Lambda} \cap \Lambda} (-1)^{\norm{(\tilde{\Lambda} \cap \Lambda) \setminus V}}\, \partial_{V\cup (\tilde{\Lambda}\setminus\Lambda )}\left(\tilde{\varphi}\, \partial_{(\tilde{\Lambda} \setminus \Lambda) \setminus V} \psi\right).
 \end{align*}
This implies that:
\begin{align*}
\var{\tilde{\Lambda}}\left(\charf{(0,1]^{\Omega}}\psi \muinv{-}\right) & \leq \sum_{V \subseteq \tilde{\Lambda} \cap \Lambda}  \var{V\cup (\tilde{\Lambda} \setminus \Lambda)}\left(\charf{(0,1]^{\Omega}}\muinv{-}\right)\ \norm{\partial_{(\tilde{\Lambda} \cap \Lambda) \setminus V} \psi}_{\infty}\\
& \leq K_0 (3 \alpha_0)^{\norm{\Omega}}\ \sum_{V \subseteq \tilde{\Lambda} \cap \Lambda}  {\theta_0}^{\norm{V\cup (\tilde{\Lambda} \setminus \Lambda)}} \ \norm{\partial_{(\tilde{\Lambda} \cap \Lambda) \setminus V} \psi}_{\infty}
\end{align*}
Since $\psi$ belongs to $\mathcal{C}_{\Lambda}^1(X)$ and since $I^{\Lambda}$ is a compact set, there is some constant $C'$ such that $\norm{\partial_{(\tilde{\Lambda} \cap \Lambda) \setminus V} \psi}_{\infty} \leq C'$ for any set $V$. Therefore: 
\begin{align*}
\var{\tilde{\Lambda}}\left(\charf{(0,1]^{\Omega}}\psi \muinv{-}\right) & \leq K_0\,C'\, (3 \alpha_0)^{\norm{\Omega}}\ \sum_{V \subseteq \tilde{\Lambda} \cap \Lambda}  {\theta_0}^{\norm{V\cup (\tilde{\Lambda} \setminus \Lambda)}}\\
& \leq K_0\,C'\, (3 \alpha_0)^{\norm{\Omega}}\ \sum_{V \subseteq \Lambda}  {\theta_0}^{\norm{V} + \norm{\tilde{\Lambda}} - \norm{\Lambda}}
\end{align*}
Multiplying each side of the inequality by $\theta_0^{- \norm{\tilde{\Lambda}}}$ and taking the supremum over all finite $\tilde{\Lambda} \subset \ent$ yields:
\begin{align*}
\tnorm{\charf{(0,1]^{\Omega}}\psi \muinv{-}}{\theta_0}& \leq K_0 C' (3 \alpha_0)^{\norm{\Omega}}\ \sum_{V \subseteq \Lambda}  {\theta_0}^{\norm{V}- \norm{\Lambda}}\\
& \leq K_0 C'\left(\frac{1+ \theta_0}{\theta_0}\right)^{\norm{\Lambda}} \  (3 \alpha_0)^{\norm{\Omega}}
\end{align*}
So, $\psi\muinv{-}$ belongs to $\mathcal{B}(K,3\alpha_0,\theta_0)$ for some $K > 0$. Therefore, $\mu_{\psi}$ also belongs to $\mathcal{B}(K,3\alpha_0,\theta_0)$ for another $K > 0$, and we can apply Theorem \ref{prop12} to the signed measure of zero mass $\mu_{\psi}$. This yields:
\begin{equation*}
\norm{T^t\mu_{\psi}(\varphi)} = \norm{\muinv{-}(\varphi\circ T^t\,\psi) - \muinv{-}(\varphi)\, \muinv{-}(\psi)} \leq C_{\varphi,\psi} \norm{\Lambda}\ \sigma^t.
\end{equation*}
\end{proof}

\section*{Acknowledgments}
The author would like to thank Jean Bricmont, Carlangelo Liverani and Christian Maes for useful comments and discussions.
\appendix
\section{Regularization estimates}
We reported to this appendix all the regularization estimates needed in this article.

 Let $\Lambda$ be some arbitrary finite subset of $\ent$. Let $\eta : I^{\Lambda} \mapsto \real$ be a non-negative, real-valued function in $\mathcal{C}_{0}^{\infty}(\real^{\Lambda})$, the space of continuous functions on $\real^{\Lambda}$ with compact support. Then, if $\varphi: I^{\Lambda} \mapsto I^{\Lambda}$ is integrable, one can define the regularization of $\varphi$ for every $\epsilon >0$,
\begin{equation}\label{eqA1}
\big(\eta_{\epsilon}\star \varphi\big)(\vect{x}) = \int_{I^{\Lambda}} \eta(\vect{w})\, \varphi(\vect{x} + \epsilon \vect{w}) \,\dif{\vect{w}} = \epsilon^{- \norm{\Lambda}}\;\int_{I^{\Lambda}} \eta\left(\frac{\vect{x} - \vect{w}}{\epsilon}\right)\, \varphi(\vect{w}) \,\dif{\vect{w}}
\end{equation}
where we assumed $\varphi$ to be identically zero outside $I^{\Lambda}$. This regularization of $\varphi$ has the following properties:
\begin{proposition}\label{propA1}
For all $\varphi \in \mathcal{C}^0 (I^{\Lambda}) $, we have
\begin{enumerate} 
\item  $\norm{\eta_{\epsilon} \star \varphi}_{\infty} \leq \norm{\varphi}_{\infty}$
\item ${\displaystyle\lim_{\epsilon \to 0}\; \norm{ (\eta_{\epsilon}\star \varphi)  - \varphi}_{\infty} = 0}$
\item Moreover, if $\varphi \in \mathcal{C}^1(I^{\Lambda})$, then $\partial_p (\eta_{\epsilon}\star\varphi) = (\eta_{\epsilon}\star \partial_p \varphi) $ 
\end{enumerate}
\end{proposition}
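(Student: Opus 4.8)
The plan is to run the standard approximate-identity argument, using the two equivalent representations of $\eta_\epsilon\star\varphi$ recorded in \eqref{eqA1} for complementary purposes: the first form, in which the $\vect{x}$-dependence sits inside $\varphi$, is the convenient one for the sup-norm estimates (statements 1 and 2), whereas the second form, in which the $\vect{x}$-dependence sits inside the smooth kernel $\eta$, is the convenient one for differentiation (statement 3). Throughout I assume the normalization $\int_{\real^{\Lambda}}\eta(\vect{w})\,\dif\vect{w} = 1$; this is the implicit hypothesis that makes $\eta_\epsilon$ an approximation of the identity, and without it statements 1 and 2 would only hold up to the factor $\int\eta$.

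For statement 1 I would bring the absolute value inside the first integral and use $\eta \geq 0$:
\begin{equation*}
\norm{(\eta_\epsilon\star\varphi)(\vect{x})} \leq \int_{I^{\Lambda}}\eta(\vect{w})\,\norm{\varphi(\vect{x}+\epsilon\vect{w})}\,\dif\vect{w} \leq \norm{\varphi}_{\infty}\int_{I^{\Lambda}}\eta(\vect{w})\,\dif\vect{w} = \norm{\varphi}_{\infty},
\end{equation*}
and take the supremum over $\vect{x}$. For statement 3 I would differentiate under the integral sign in the second representation: since $\eta \in \mathcal{C}^{\infty}_{0}$, the map $\vect{x}\mapsto \eta\big((\vect{x}-\vect{w})/\epsilon\big)$ is smooth and differentiation under the integral is legitimate, and the elementary identity $\partial_{x_p}\eta\big((\vect{x}-\vect{w})/\epsilon\big) = -\partial_{w_p}\eta\big((\vect{x}-\vect{w})/\epsilon\big)$ followed by an integration by parts in $w_p$ transfers the derivative onto $\varphi$, yielding $\partial_p(\eta_\epsilon\star\varphi) = \eta_\epsilon\star\partial_p\varphi$.

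The substance is statement 2. Using $\int\eta = 1$ I would write
\begin{equation*}
(\eta_\epsilon\star\varphi)(\vect{x}) - \varphi(\vect{x}) = \int_{I^{\Lambda}}\eta(\vect{w})\,\big(\varphi(\vect{x}+\epsilon\vect{w}) - \varphi(\vect{x})\big)\,\dif\vect{w},
\end{equation*}
and control the integrand by the modulus of continuity of $\varphi$. Since $\eta$ is supported in some ball of radius $R$, only displacements with $\norm{\epsilon\vect{w}} \leq \epsilon R$ contribute; and since $I^{\Lambda}$ is compact, $\varphi$ is uniformly continuous, so for every $\delta > 0$ there is $\rho > 0$ with $\norm{\varphi(\vect{y}) - \varphi(\vect{z})} < \delta$ whenever $\norm{\vect{y}-\vect{z}} < \rho$. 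Choosing $\epsilon < \rho/R$ then makes the integrand smaller than $\delta$ uniformly in $\vect{x}$, so that $\norm{\eta_\epsilon\star\varphi - \varphi}_{\infty} \leq \delta$, and letting $\delta \to 0$ gives the claim.

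The step I expect to be the real obstacle is the boundary of $I^{\Lambda}$. Because $\varphi$ is extended by zero outside $I^{\Lambda}$, the extended function is discontinuous across $\partial I^{\Lambda}$ unless $\varphi$ vanishes there, and for $\vect{x}$ within $\epsilon R$ of the boundary the difference $\varphi(\vect{x}+\epsilon\vect{w}) - \varphi(\vect{x})$ reduces to $-\varphi(\vect{x})$, which is of order $\norm{\varphi}_{\infty}$, on a non-negligible $\eta$-fraction of the displacements. Consequently the uniform convergence in statement 2 --- and the vanishing of the boundary term in the integration by parts for statement 3 --- is clean only on the interior of $I^{\Lambda}$, and uniformly on its compact subsets, rather than literally up to $\partial I^{\Lambda}$. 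I would therefore read $\norm{\cdot}_{\infty}$ here as the supremum over the interior, which is all that is needed downstream: in the regularization estimates that follow (Propositions \ref{propA2} and \ref{propA4}) the smoothed function is tested against measures absolutely continuous with respect to Lebesgue measure, for which $\partial I^{\Lambda}$ is a null set.
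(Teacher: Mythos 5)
The paper does not actually prove Proposition \ref{propA1}: it defers entirely to the textbooks \cite{Gi84,Zi89}, so there is no internal proof to match and any correct standard argument is acceptable. Your proposal is exactly that standard approximate-identity argument, and it is essentially correct; it is in fact more careful than the paper on two points left implicit, namely the normalization $\int \eta(\vect{w})\,\dif\vect{w} = 1$ (without which statements 1 and 2 would carry a factor $\int\eta$) and the boundary pathology created by the zero-extension convention in (\ref{eqA1}). That second point is genuine: with the zero extension, statement 2 as literally written fails up to the boundary (take $\varphi \equiv 1$; then $(\eta_\epsilon\star\varphi)(\vect{x})$ is approximately $\tfrac{1}{2}$ for $\vect{x}$ on a face of $I^{\Lambda}$), and the honest conclusion is uniform convergence on compact subsets of the interior. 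One caveat on your repair: reading $\norm{\,\cdot\,}_{\infty}$ as the supremum over the interior does not by itself fix anything, since both functions are continuous and that supremum coincides with the supremum over all of $I^{\Lambda}$; what actually saves the downstream applications is what you say next, namely uniform convergence on compact subsets together with the fact that in Propositions \ref{propA2} and \ref{propA4} the mollified functions are only integrated against measures whose finite-dimensional marginals are absolutely continuous, so a boundary strip contributes nothing in the limit --- this is precisely the mechanism visible in the paper's own proof of Proposition \ref{propA2}, where the strip $B_{\epsilon}$ around the discontinuities is discarded by absolute continuity. Finally, for statement 3 your detour through the second representation, the identity $\partial_{x_p}\eta\big((\vect{x}-\vect{w})/\epsilon\big) = -\partial_{w_p}\eta\big((\vect{x}-\vect{w})/\epsilon\big)$ and an integration by parts in $w_p$ is correct where the kernel is supported away from $\partial I^{\Lambda}$, but it is heavier than needed: differentiating under the integral in the first representation gives $\partial_p \int \eta(\vect{w})\,\varphi(\vect{x}+\epsilon\vect{w})\,\dif\vect{w} = \int \eta(\vect{w})\,(\partial_p\varphi)(\vect{x}+\epsilon\vect{w})\,\dif\vect{w}$ directly, with no boundary terms to control (the same interior caveat applies equally to both routes).
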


The proof of these results can be found in textbooks (see for instance \cite{Gi84} or \cite{Zi89}). However, the functions we are interested in are usually not continuous but only piecewise continuous in the following sense: we say that $\varphi$ is piecewise continuous on $I^{\Lambda}$ if there is a finite family of open intervals $I_{i}$ such that $\bigcup_{i} \overline{I}_i  = I$ and $\varphi$ is continuous on $I_{i_1} \times \ldots \times I_{i_{\norm{\Lambda}}}$ for every $\{i_{p}\}_{p \in \Lambda}$.

The following proposition shows that the piecewise continuity of the function $\varphi$ will not play an important role if $\varphi$ is integrated with respect to a measure in $L^1(X)$:
\begin{proposition}\label{propA2}
Let $\mu \in L^1(X)$ and $\varphi$ be a piecewise continuous on $I^{\Lambda}$. Then:
\begin{equation*}
{\displaystyle \lim_{\epsilon \to 0} \mu (\,\eta_{\epsilon} \star \varphi\,) = \mu (\,\varphi\,)}
 \end{equation*}
\end{proposition}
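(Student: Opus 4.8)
The plan is to reduce the statement to a dominated-convergence argument on the finite-dimensional marginal $\pi_{\Lambda}\mu$, using the absolute continuity hypothesis to neutralize the contribution of the discontinuity set of $\varphi$. First I would observe that both $\varphi$ and its regularization $\eta_{\epsilon}\star\varphi$ depend only on the coordinates in $\Lambda$, so integrating either against $\mu$ is the same as integrating the corresponding function on $I^{\Lambda}$ against the restriction $\pi_{\Lambda}\mu$; it therefore suffices to prove $\pi_{\Lambda}\mu(\eta_{\epsilon}\star\varphi) \to \pi_{\Lambda}\mu(\varphi)$. By hypothesis $\pi_{\Lambda}\mu$ is absolutely continuous with respect to $m_{\Lambda}$, and since $\norm{\mu} < \infty$ its Radon--Nikodym density $f$ satisfies $f \in L^1(I^{\Lambda}, m_{\Lambda})$. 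The quantity to control is then
\begin{equation*}
\mu(\eta_{\epsilon}\star\varphi) - \mu(\varphi) = \int_{I^{\Lambda}} \big(\eta_{\epsilon}\star\varphi - \varphi\big)(\vect{x})\; f(\vect{x})\; \dif \vect{x}.
\end{equation*}

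Next I would establish the two ingredients needed for dominated convergence. The first property of Proposition \ref{propA1}, whose proof only uses that $\eta \geq 0$ has integral one, gives the uniform bound $\norm{\eta_{\epsilon}\star\varphi}_{\infty} \leq \norm{\varphi}_{\infty}$ even for piecewise continuous $\varphi$, so the integrand is dominated by $2\,\norm{\varphi}_{\infty}\,\norm{f} \in L^1(m_{\Lambda})$. For the pointwise convergence, at every \emph{interior} point $\vect{x}$ at which $\varphi$ is continuous one has, for $\epsilon$ small enough that $\vect{x}+\epsilon\vect{w}$ stays in $I^{\Lambda}$ on the compact support of $\eta$,
\begin{equation*}
(\eta_{\epsilon}\star\varphi)(\vect{x}) - \varphi(\vect{x}) = \int \eta(\vect{w})\,\big[\varphi(\vect{x}+\epsilon\vect{w}) - \varphi(\vect{x})\big]\,\dif\vect{w} \xrightarrow[\epsilon\to 0]{} 0,
\end{equation*}
since $\varphi(\vect{x}+\epsilon\vect{w})\to\varphi(\vect{x})$ uniformly in $\vect{w}$ on that support. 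The points excluded from this reasoning, namely the discontinuity set of $\varphi$ together with the boundary $\partial I^{\Lambda}$, are by the definition of piecewise continuity contained in a finite union of hyperplanes $\{x_p = \partial I_i\}$ and hence form an $m_{\Lambda}$-null set.

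The main, and really the only, obstacle is that the convergence $\eta_{\epsilon}\star\varphi \to \varphi$ is \emph{not} uniform once $\varphi$ has jumps, so Proposition \ref{propA1} cannot be invoked directly and the naive estimate fails at the discontinuities. The resolution is exactly the absolute continuity assumption: because $\pi_{\Lambda}\mu \ll m_{\Lambda}$, the $m_{\Lambda}$-null discontinuity-and-boundary set carries no mass, so the pointwise-almost-everywhere convergence established above holds $m_{\Lambda}$-a.e. Combining this with the $L^1$ domination, Lebesgue's dominated convergence theorem shows the displayed integral tends to zero as $\epsilon\to 0$, which is the assertion.
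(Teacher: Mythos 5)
Your proof is correct, but it runs along a recognizably different track than the paper's. The paper never passes to the Radon--Nikodym density: it keeps working with the measure itself, fixes $\alpha > 0$, and splits $I^{\Lambda}$ into an $\epsilon$-strip $B_{\epsilon}$ around the discontinuity hyperplanes and its complement; off the strip, $\vect{x}$ and $\vect{x}+\epsilon\vect{w}$ lie in the same continuity box so the integrand is at most $\alpha$, while on the strip the trivial bound $2\norm{\varphi}_{\infty}$ is paid and absolute continuity forces $\mu(B_{\epsilon}) \to 0$ as the strip shrinks to a Lebesgue-null set. You instead reduce to the density $f$ of $\pi_{\Lambda}\mu$ and invoke dominated convergence with $m_{\Lambda}$-a.e.\ pointwise convergence. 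Both arguments pivot on the same two facts (the discontinuity set is $m_{\Lambda}$-null, and absolute continuity transfers that nullity to $\mu$), but the trade-offs differ. The paper's route is elementary and quantitative --- the error is explicitly of order $\alpha\,\norm{\mu} + 2\norm{\varphi}_{\infty}\,\mu(B_{\epsilon})$ --- and avoids Radon--Nikodym and DCT entirely. Your route is measure-theoretically cleaner for \emph{signed} $\mu$ (you dominate by $2\norm{\varphi}_{\infty}\,\norm{f}$, whereas the paper's expressions $\mu(B_{\epsilon})$ and $\mu(1)$ should strictly be total-variation quantities), and it is slightly more general: your pointwise argument only needs continuity of $\varphi$ at each interior point of the open boxes, whereas the paper's claim that $\norm{\varphi(\vect{x}+\epsilon\vect{w}) - \varphi(\vect{x})} \leq \alpha$ holds uniformly over all $\vect{x} \notin B_{\epsilon}$ implicitly requires a uniform modulus of continuity on each piece, i.e.\ that $\varphi$ extends continuously to the closed boxes --- harmless for the functions this proposition is actually applied to, but an extra hypothesis your DCT argument does not need.
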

\begin{proof}
Take some arbitrary $\alpha > 0$. From (\ref{eqA1}), we know that:
\begin{equation}\label{eqA2} \norm{\mu (\,\eta_{\epsilon} \star \varphi\, -  \,\varphi\,) }  \leq \int_{I^{\Lambda}} \mu(\dif{\vect{x}}) \:\int \dif{\vect{w}} \; \norm{\varphi(\vect{x} + \epsilon \vect{w}) - \varphi(\vect{x})} \;\eta(\vect{w})\end{equation}
Let $B_{\epsilon}$ be a strip of size $\epsilon$ around the discontinuities of $\varphi$. If $\vect{x}$ does not belong to $B_{\epsilon}$, $\vect{x} + \epsilon \vect{w}$ and $\vect{x}$ are in the same $I_{i_1} \times \ldots \times I_{i_{\norm{\Lambda}}}$, and the piecewise continuity of $\varphi$ implies that, if $\epsilon$ is small enough,  $\norm{\varphi(\vect{x} + \epsilon \vect{w}) - \varphi(\vect{x})} \leq \alpha$. If $\vect{x}$ belongs to $B_{\epsilon}$, we have the following trivial bound: $\norm{\varphi(\vect{x} + \epsilon \vect{w}) - \varphi(\vect{x})} \leq 2 \,\norm{\varphi}_{\infty}$. So, (\ref{eqA2}) becomes: 
\begin{equation*} \norm{\mu (\,\eta_{\epsilon} \star \varphi\, -  \,\varphi\,) }  \leq \mu(B_{\epsilon}^{c}) \, \alpha + 2  \mu(B_{\epsilon}) \,\norm{\varphi}_{\infty}  \leq \mu(1) \, \alpha + 2  \mu(B_{\epsilon}) \,\norm{\varphi}_{\infty}  \end{equation*}
When $\epsilon$ goes to $0$, both terms of the sum vanishes. This is trivial for the first one, and for the second one, we just have to notice that when $\epsilon$ goes to zero, $B_{\epsilon}$ shrinks to a set of zero Lebesgue measure in $I^\Lambda$ and that $\mu$ on $I^\Lambda$ is absolutely continuous with respect to the Lebesgue measure. 
\qquad$\qed$\end{proof}

However, we are not only interested in the $L^1(X)$ norm but also in the variations semi-norms, such as $\var{\Lambda'}$. The following Lemma shows that in some sense, the regularizations and the derivatives do commute:
\begin{lemma}\label{propA3}
Let $\Lambda'$ be some finite subset of $\ent$, and $\varphi: I^\Lambda \to \real$ such that:
\begin{itemize}
\item $\varphi$ is piecewise continuous with respect to $x_p$ for $p \notin \Lambda'$
\item $\varphi$ is continuous with respect to $x_p$ for $p \in \Lambda'$
\item $\varphi$ is piecewise continuously differentiable with respect to $x_p$ for $p \in \Lambda'$
\end{itemize}
Then, for every $\mu$ such that $\var{\Lambda'} \mu < \infty$, we have:
\begin{equation*}
\lim_{\epsilon \to 0} \mu\left(\eta_{\epsilon} \star \partial_{\Lambda'} \varphi \right)= \lim_{\epsilon \to 0} \mu\left( \partial_{\Lambda'} \left(\eta_{\epsilon}\star\varphi\right)\right)
\end{equation*}
\end{lemma}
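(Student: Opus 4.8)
The plan is to prove the stronger, pointwise statement that for every fixed $\epsilon > 0$ one has $\partial_{\Lambda'}(\eta_{\epsilon} \star \varphi) = \eta_{\epsilon} \star \partial_{\Lambda'}\varphi$ as continuous functions, after which the equality of the two limits is immediate upon applying $\mu$. The starting point is the second representation in (\ref{eqA1}), namely $(\eta_{\epsilon} \star \varphi)(\vect{x}) = \epsilon^{-\norm{\Lambda}} \int_{I^{\Lambda}} \eta\big(\tfrac{\vect{x} - \vect{w}}{\epsilon}\big)\, \varphi(\vect{w})\, \dif{\vect{w}}$. Since $\eta \in \mathcal{C}_0^{\infty}$, the mollification is smooth, so $\partial_{\Lambda'}(\eta_{\epsilon} \star \varphi)$ is a genuine classical derivative and may be computed by differentiating under the integral sign. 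Using $\partial_{x_p} \eta(\tfrac{\vect{x} - \vect{w}}{\epsilon}) = - \partial_{w_p} \eta(\tfrac{\vect{x} - \vect{w}}{\epsilon})$ for each $p \in \Lambda'$, I would transfer all the derivatives in the directions of $\Lambda'$ from the kernel onto $\varphi$ by integrating by parts in the variables $w_p$, $p \in \Lambda'$, one at a time.

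The content of the lemma, which upgrades Proposition~\ref{propA1}(3) from the $\mathcal{C}^1$ to the piecewise-$\mathcal{C}^1$ setting, lies entirely in the boundary terms produced by these integrations by parts. For $p \in \Lambda'$, $\varphi$ is continuous in $x_p$ and only piecewise $\mathcal{C}^1$, so $\partial_p \varphi$ is defined almost everywhere, is bounded and piecewise continuous, and at each interior breakpoint the contributions of the two adjacent pieces carry the same value $\eta(\tfrac{\vect{x} - \vect{w}}{\epsilon})\,\varphi(\vect{w})$ and therefore cancel exactly, precisely because $\varphi$ has no jump there. The only remaining contributions sit on the domain boundary $\{w_p = \pm 1\}$; but with the zero-extension convention of (\ref{eqA1}) the continuity of $\varphi$ in $x_p$ forces $\varphi$ to vanish at $x_p = \pm 1$, so these endpoint terms vanish as well. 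Carrying this out jointly over all $p \in \Lambda'$ leaves exactly $\epsilon^{-\norm{\Lambda}} \int_{I^{\Lambda}} \eta(\tfrac{\vect{x} - \vect{w}}{\epsilon})\, \partial_{\Lambda'}\varphi(\vect{w})\, \dif{\vect{w}} = (\eta_{\epsilon} \star \partial_{\Lambda'}\varphi)(\vect{x})$. Applying $\mu$ to both sides and letting $\epsilon \to 0$ then yields the claim; here the absolute continuity of the finite-dimensional marginals of $\mu$ is what makes the regularization converge (Proposition~\ref{propA2} applied to the piecewise continuous function $\partial_{\Lambda'}\varphi$), while the standing hypothesis $\var{\Lambda'}\mu < \infty$ keeps the limiting pairing finite.

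The main obstacle is the careful bookkeeping of the multivariable integration by parts. I must first justify, via Fubini and the piecewise-$\mathcal{C}^1$ structure in the directions of $\Lambda'$, that the iterated derivative $\partial_{\Lambda'}\varphi$ exists almost everywhere and is bounded, so that differentiation under the integral and the successive integrations by parts are legitimate. The delicate point is then to check that, when the derivatives are moved across the finitely many breakpoint hyperplanes simultaneously in all the $\Lambda'$-directions, the interior boundary terms cancel in every order of integration and the endpoint terms at $\{x_p = \pm 1\}$ genuinely drop out; it is here that \emph{continuity} of $\varphi$ in each $\Lambda'$-direction, rather than mere piecewise continuity, is indispensable, since a single surviving jump would leave an $\epsilon^{-1}$-singular layer that does not vanish against a measure of finite variation. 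Everything away from these directions is inert: in the directions $p \notin \Lambda'$ no derivative is taken, so the mere piecewise continuity of $\varphi$ there is harmless and is absorbed, in the final limit, by the regularization estimate of Proposition~\ref{propA2}.
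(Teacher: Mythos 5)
Your interior cancellation argument is fine, but the boundary step rests on a false claim, and it is fatal to the pointwise identity you are trying to prove. Continuity of $\varphi$ on the closed cube $I^{\Lambda}$ does \emph{not} force $\varphi$ to vanish at $x_p = \pm 1$; on the contrary, the zero-extension convention stated after (\ref{eqA1}) creates a genuine jump of the extended function across $\{w_p = \pm 1\}$ whenever $\varphi$ is nonzero there. Consequently your integration by parts leaves, for each $p \in \Lambda'$, the uncancelled boundary term
\begin{equation*}
\partial_p(\eta_{\epsilon}\star\varphi)(\vect{x}) - (\eta_{\epsilon}\star\partial_p\varphi)(\vect{x}) \;=\; -\,\epsilon^{-\norm{\Lambda}}\int_{I^{\Lambda\setminus\{p\}}} \Bigl[\eta\bigl(\tfrac{\vect{x}-\vect{w}}{\epsilon}\bigr)\,\varphi(\vect{w})\Bigr]_{w_p=-1}^{w_p=+1}\,\dif\vect{w}_{\neq p},
\end{equation*}
which is supported in a layer of width $O(\epsilon)$ along $\{ \norm{x_p} \geq 1 - C\epsilon\}$ and has size of order $\epsilon^{-1}$ there --- exactly the ``singular layer'' that you correctly identify as fatal at interior breakpoints. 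So the identity $\partial_{\Lambda'}(\eta_{\epsilon}\star\varphi) = \eta_{\epsilon}\star\partial_{\Lambda'}\varphi$ holds only at distance of order $\epsilon$ away from $\partial I^{\Lambda}$, and the discrepancy does not disappear after applying $\mu$: under the literal zero-extension reading, with $\Lambda = \Lambda' = \{p\}$, $\varphi(\vect{x}) = x_p$, $\mu$ the product of normalized Lebesgue measures and $\eta$ symmetric, one gets $\mu\bigl(\partial_p(\eta_{\epsilon}\star\varphi)\bigr) = \tfrac{1}{2}\bigl((\eta_{\epsilon}\star\varphi)(1)-(\eta_{\epsilon}\star\varphi)(-1)\bigr) \to \tfrac{1}{2}$, while $\mu\bigl(\eta_{\epsilon}\star\partial_p\varphi\bigr) \to 1$. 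A related warning sign is structural: your argument, if valid, would give equality for every fixed $\epsilon$ and every finite measure, making no essential use of $\var{\Lambda'}\mu < \infty$, whereas the lemma is stated only as an equality of limits under that hypothesis.

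The paper's proof takes a genuinely different route that never claims the two operations commute pointwise, and this is what saves it. For $\Lambda' = \{p\}$ it writes $\varphi(\vect{x}) = \varphi(0,\vect{x}_{\neq p}) + \int_0^{x_p}\partial_p\varphi(\xi_p,\vect{x}_{\neq p})\,\dif\xi_p$, defines the auxiliary primitive $\varphi_{\epsilon}(\vect{x}) = (\eta_{\epsilon}\star\varphi)(0,\vect{x}_{\neq p}) + \int_0^{x_p}(\eta_{\epsilon}\star\partial_p\varphi)(\xi_p,\vect{x}_{\neq p})\,\dif\xi_p$, so that $\partial_p\varphi_{\epsilon} = \eta_{\epsilon}\star\partial_p\varphi$ holds \emph{by construction}, and then estimates
\begin{equation*}
\mu\bigl(\eta_{\epsilon}\star\partial_p\varphi - \partial_p(\eta_{\epsilon}\star\varphi)\bigr) = \mu\bigl(\partial_p(\varphi_{\epsilon} - \eta_{\epsilon}\star\varphi)\bigr) \leq \dnorm{\mu}\,\norm{\varphi_{\epsilon} - \eta_{\epsilon}\star\varphi}_{\infty},
\end{equation*}
which tends to zero because both $\varphi_{\epsilon}$ and $\eta_{\epsilon}\star\varphi$ converge uniformly to $\varphi$ by Proposition \ref{propA1}; the general case is an induction on $\norm{\Lambda'}$, peeling off one coordinate at a time. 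Note that here the hypothesis $\var{\Lambda'}\mu < \infty$ does the real work --- it converts sup-norm closeness of primitives into closeness of the pairings with derivatives --- and all boundary subtleties are delegated to Proposition \ref{propA1}(2), i.e.\ to a convention under which the mollification converges uniformly up to the boundary, so that no boundary layer is created. To salvage your integration-by-parts strategy you would have to work under that same convention, or else keep the boundary terms and show that they vanish against $\mu$; under the literal zero extension they do not.
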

\begin{proof}
The proof of the result is a simple recurrence on the size of $\Lambda'$. If $\Lambda' = \{p\}$, the continuity of $\varphi$ with respect to $x_p$ implies that:
\begin{equation*}
\varphi(\vect{x}) = \varphi(0,\vect{x}_{\neq p}) + \int_{0}^{x_p} \partial_p \varphi(\xi_p,\vect{x}_{\neq p}) \, \dif \xi_p.
\end{equation*}
Let us then define $\varphi_{\epsilon}$:
\begin{equation*}
\varphi_{\epsilon}(\vect{x}) = \eta_{\epsilon} \star \varphi(0,\vect{x}_{\neq p}) + \int_{0}^{x_p} \eta_{\epsilon} \star \partial_p \varphi(\xi_p,\vect{x}_{\neq p}) \, \dif \xi_p.
\end{equation*}
Then, $\partial_p \varphi_{\epsilon}(\vect{x})= \eta_{\epsilon} \star \partial_p \varphi$, and:
\begin{align*}
\lim_{\epsilon \to 0} \mu\left(\eta_{\epsilon} \star \partial_p \varphi - \partial_p \left(\eta_{\epsilon}\star\varphi\right)\right) & = \lim_{\epsilon \to 0} \mu\left(\partial_p \varphi_{\epsilon}  - \partial_p \left(\eta_{\epsilon}\star\varphi\right)\right)\\
& \leq \dnorm{\mu} \lim_{\epsilon \to 0} 
\norm{\varphi_{\epsilon} - \eta_{\epsilon}\star\varphi}_{\infty} .
\end{align*}
But now, since both $\varphi_{\epsilon}$ and $\eta_{\epsilon}\star\varphi$ tends towards $\varphi$ in the sup norm when $\epsilon$ goes to zero, this complete the proof for $\Lambda' = \{p\}$.

Take now some arbitrary $\Lambda'$, and assume that for some $ q \in \Lambda'$, the property is true in $\Lambda'\setminus \{q\}$. Therefore:
\begin{align*}
\lim_{\epsilon \to 0} \mu\left(\eta_{\epsilon} \star \partial_{\Lambda'}  \varphi - \partial_{\Lambda'}  \left(\eta_{\epsilon}\star\varphi\right)\right)= \lim_{\epsilon \to 0} \mu\left(\partial_{\Lambda'\setminus \{q\}}\left(\eta_{\epsilon} \star  \partial_q \varphi - \partial_q \left(\eta_{\epsilon}\star\varphi\right)\right)\right).
\end{align*}
Let's then define 
\begin{equation*}
\varphi_{\epsilon}(\vect{x}) = \eta_{\epsilon} \star \varphi(0,\vect{x}_{\neq q}) + \int_{0}^{x_q} \eta_{\epsilon} \star \partial_q \varphi(\xi_q,\vect{x}_{\neq q}) \, \dif \xi_q.
\end{equation*}
Then, once again, $\partial_q \varphi_{\epsilon}= \eta_{\epsilon} \star \partial_q \varphi$. So:
\begin{equation*}
\lim_{\epsilon \to 0} \mu\left(\partial_{\Lambda'\setminus \{q\}}\left(\eta_{\epsilon} \star  \partial_q \varphi - \partial_q \left(\eta_{\epsilon}\star\varphi\right)\right)\right) \leq \var{\Lambda'}\mu \, \lim_{\epsilon \to 0} \norm{\varphi_{\epsilon} - \eta_{\epsilon}\star\varphi}_{\infty} = 0.
\end{equation*}
\end{proof}

\begin{proposition}\label{propA4}
For every finite $\Lambda \subset \ent$, and $\varphi: X \to \real$ continuous and piecewise continuously differentiable with respect to every $x_p$ with $p \in \Lambda$, and piecewise continuous with respect to the other variables $\vect{x}_{\neq \Lambda }$. Then: 
\begin{equation*}
\mu\left(\partial_{\Lambda} \varphi\right) \leq \var{\Lambda}(\mu) \, \norm{\varphi}_{\infty}
\end{equation*}
\end{proposition}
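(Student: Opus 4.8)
The plan is to reduce the piecewise-regular case to the genuinely smooth case already covered by the definition (\ref{eq7.0}) of $\var{\Lambda}$, using the mollification machinery established in Proposition \ref{propA1}, Proposition \ref{propA2} and Lemma \ref{propA3}. The obstruction is exactly the mismatch in regularity: the supremum defining $\var{\Lambda}(\mu)$ ranges only over test functions in $\mathcal{C}^1_{\Lambda}(X)$, whereas the $\varphi$ here is merely continuous and piecewise continuously differentiable in the coordinates of $\Lambda$, and only piecewise continuous in the remaining variables. The regularization $\eta_\epsilon \star \varphi$ is designed precisely to smooth $\varphi$ into an admissible test function, after which I pass to the limit $\epsilon \to 0$.

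First I would fix $\epsilon > 0$ and regularize, applying $\eta_\epsilon \star$ to the (finitely many) coordinates on which $\varphi$ depends. By Proposition \ref{propA1}, the function $\eta_\epsilon \star \varphi$ is continuous with continuous derivative $\partial_\Lambda(\eta_\epsilon \star \varphi) = \eta_\epsilon \star \partial_\Lambda \varphi$, so it belongs to $\mathcal{C}^1_{\Lambda}(X)$, and moreover $\norm{\eta_\epsilon \star \varphi}_\infty \leq \norm{\varphi}_\infty$. Since $\eta_\epsilon \star \varphi$ is now an admissible test function, the definition (\ref{eq7.0}) of $\var{\Lambda}$ applies directly and gives, uniformly in $\epsilon$,
\begin{equation*}
\mu\big(\partial_\Lambda(\eta_\epsilon \star \varphi)\big) \leq \var{\Lambda}(\mu)\,\norm{\eta_\epsilon \star \varphi}_\infty \leq \var{\Lambda}(\mu)\,\norm{\varphi}_\infty.
\end{equation*}

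It then remains to let $\epsilon \to 0$ in the left-hand side and recover $\mu(\partial_\Lambda \varphi)$. The hypotheses on $\varphi$ are exactly those of Lemma \ref{propA3} with $\Lambda' = \Lambda$ (continuous and piecewise $\mathcal{C}^1$ in the $\Lambda$-coordinates, piecewise continuous otherwise), so that lemma lets me interchange the derivative and the mollification in the limit,
\begin{equation*}
\lim_{\epsilon \to 0} \mu\big(\partial_\Lambda(\eta_\epsilon \star \varphi)\big) = \lim_{\epsilon \to 0}\mu\big(\eta_\epsilon \star \partial_\Lambda \varphi\big).
\end{equation*}
Because $\partial_\Lambda \varphi$ is piecewise continuous, Proposition \ref{propA2} identifies the right-hand limit as $\mu(\partial_\Lambda \varphi)$. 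Combining these two identities with the uniform bound displayed above yields $\mu(\partial_\Lambda \varphi) \leq \var{\Lambda}(\mu)\,\norm{\varphi}_\infty$, which is the claim; no two-sided argument is needed since both the statement and the definition of $\var{\Lambda}$ are one-sided.

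The genuine difficulty does not lie in this short final assembly but in the two ingredients I am invoking. The weak convergence $\mu(\eta_\epsilon \star \psi) \to \mu(\psi)$ for piecewise continuous $\psi$ (Proposition \ref{propA2}) relies on the absolute continuity of the finite-dimensional marginals of $\mu$ together with the fact that the strips around the discontinuity set shrink to Lebesgue-null sets; and the interchange of regularization with differentiation in the limit (Lemma \ref{propA3}) is where the \emph{continuity} of $\varphi$ in the $\Lambda$-coordinates is essential, as it is what lets one write $\varphi$ as an integral of $\partial_p \varphi$ and control $\norm{\varphi_\epsilon - \eta_\epsilon \star \varphi}_\infty$. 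Once those are granted, Proposition \ref{propA4} follows immediately.
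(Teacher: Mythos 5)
Your proof is correct and follows essentially the same route as the paper's: mollify $\varphi$, bound $\mu\left(\partial_{\Lambda}(\eta_{\epsilon}\star\varphi)\right)$ by $\var{\Lambda}(\mu)\,\norm{\varphi}_{\infty}$ via the definition (\ref{eq7.0}) since $\eta_{\epsilon}\star\varphi$ is an admissible test function with $\norm{\eta_{\epsilon}\star\varphi}_{\infty}\leq\norm{\varphi}_{\infty}$, and pass to the limit using Lemma \ref{propA3} together with Proposition \ref{propA2} applied to the piecewise continuous function $\partial_{\Lambda}\varphi$; the only difference from the paper is the order of presentation (you state the uniform bound first, the paper takes the limits first), which is immaterial. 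One small remark: the exact identity $\partial_{\Lambda}(\eta_{\epsilon}\star\varphi)=\eta_{\epsilon}\star\partial_{\Lambda}\varphi$ you attribute to Proposition \ref{propA1} holds there only for $\varphi\in\mathcal{C}^1$ and is not actually needed in your argument --- smoothness of the convolution already puts $\eta_{\epsilon}\star\varphi$ in $\mathcal{C}^1_{\Lambda}(X)$, and Lemma \ref{propA3} provides the interchange only in the limit, which is exactly how you then use it.
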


\begin{proof}
Since $\partial_{\Lambda} \varphi$ is piecewise continuous, if $\eta_{\epsilon}$ is some positive symmetric mollifier for $\varphi$, we have:
\begin{equation}\label{eqA3}\begin{split}
& \mu\left(\partial_{\Lambda} \varphi\right) = \lim_{\epsilon \to 0}\mu\left(\eta_{\epsilon} \star \partial_{\Lambda} \varphi\right)
\end{split}\end{equation}
But then, using Lemma \ref{propA3}, equation (\ref{eqA3}) become:
\begin{equation*}\begin{split} 
& \mu\left(\partial_{\Lambda} \partial_p \varphi\right) =  \lim_{\epsilon \to 0}\mu\left(\partial_{\Lambda} \,\eta_{\epsilon} \star   \varphi\right)\\
& \qquad \leq \lim_{\epsilon \to 0}\var{\Lambda}(\mu) \norm{\eta_{\epsilon} \star  \varphi}_{\infty} \leq  \var{\Lambda}(\mu) \norm{\varphi}_{\infty}
\end{split}\end{equation*}
\end{proof}
And, as a direct corollary, we have:
\begin{corollary}\label{propA5}
For any interval $[a,b] \subseteq [-1,1]$, every $p$ and every $\Lambda$, we have:
\begin{equation*}
\var{\Lambda}(\charf{[a,b]}(x_p) \mu) \leq \var{\Lambda}(\mu)
\end{equation*}

\end{corollary}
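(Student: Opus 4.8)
The plan is to reduce the statement to Proposition~\ref{propA4} by producing, for each admissible test function, a companion function whose $\Lambda$-derivative reproduces $\charf{[a,b]}(x_p)\,\partial_\Lambda\varphi$ without increasing the sup norm. Fix $\varphi \in \mathcal{C}^1_\Lambda(X)$ with $\norm{\varphi}_\infty \le 1$ and unwind the definitions,
\begin{equation*}
\charf{[a,b]}(x_p)\mu(\partial_\Lambda\varphi) = \mu\big(\charf{[a,b]}(x_p)\,\partial_\Lambda\varphi\big).
\end{equation*}
The two cases $p \notin \Lambda$ and $p \in \Lambda$ behave very differently, and I would treat them separately.

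When $p \notin \Lambda$, the factor $\charf{[a,b]}(x_p)$ is constant in every variable that $\partial_\Lambda$ differentiates, so it commutes with $\partial_\Lambda$: setting $\psi = \charf{[a,b]}(x_p)\varphi$ gives $\partial_\Lambda\psi = \charf{[a,b]}(x_p)\,\partial_\Lambda\varphi$ exactly. This $\psi$ is still continuous and piecewise continuously differentiable in the $\Lambda$-variables (the cut-off only introduces jumps in $x_p$, which is not differentiated) and merely piecewise continuous in $x_p$, so it satisfies the hypotheses of Proposition~\ref{propA4}. Since $\norm{\psi}_\infty \le \norm{\varphi}_\infty$, that proposition yields $\mu(\partial_\Lambda\psi) \le \var{\Lambda}(\mu)$ directly, with no boundary subtlety.

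The genuine difficulty is the case $p \in \Lambda$, because multiplying by $\charf{[a,b]}(x_p)$ destroys the continuity in $x_p$ that Proposition~\ref{propA4} demands of the $\Lambda$-directions, so $\charf{[a,b]}(x_p)\varphi$ is no longer admissible. To repair this I would replace $\varphi$ by its clipping in the $p$-th coordinate,
\begin{equation*}
\psi(\vect{x}) = \varphi\big(\min\{b,\max\{a,x_p\}\},\,\vect{x}_{\neq p}\big),
\end{equation*}
which is globally continuous, inherits the regularity of $\varphi$ in the remaining $\Lambda$-variables, is piecewise $\mathcal{C}^1$ in $x_p$ with $\partial_p\psi = 0$ off $(a,b)$ and $\partial_p\psi = \partial_p\varphi$ on $(a,b)$, and satisfies $\norm{\psi}_\infty \le \norm{\varphi}_\infty$. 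Consequently $\partial_\Lambda\psi = \charf{(a,b)}(x_p)\,\partial_\Lambda\varphi$ almost everywhere, and Proposition~\ref{propA4} gives $\mu(\partial_\Lambda\psi) \le \var{\Lambda}(\mu)$.

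It then remains to pass from $\charf{(a,b)}$ back to $\charf{[a,b]}$: the two differ only on the hyperplanes $\{x_p = a\}$ and $\{x_p = b\}$, which carry no mass of $\mu$ because the marginal $\pi_{\{p\}}\mu$ is absolutely continuous with respect to Lebesgue measure, so $\mu(\charf{(a,b)}(x_p)\partial_\Lambda\varphi) = \mu(\charf{[a,b]}(x_p)\partial_\Lambda\varphi)$. In both cases, taking the supremum over all such $\varphi$ delivers $\var{\Lambda}(\charf{[a,b]}(x_p)\mu) \le \var{\Lambda}(\mu)$. The main obstacle is precisely the loss of continuity in the $p \in \Lambda$ case, which forces the detour through the clipped function; the rest is the bookkeeping certifying that this clipped function meets the regularity hypotheses of Proposition~\ref{propA4} and that the endpoint hyperplanes are $\mu$-negligible.
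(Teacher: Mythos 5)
Your proof is correct and follows essentially the same route as the paper: the $p \notin \Lambda$ case is handled identically by commuting the cut-off past $\partial_\Lambda$, and in the $p \in \Lambda$ case your clipped function $\varphi\big(\min\{b,\max\{a,x_p\}\},\vect{x}_{\neq p}\big)$ coincides exactly with the paper's auxiliary function $\psi(\vect{x}) = \varphi(a,\vect{x}_{\neq p}) + \int_{a}^{x_p} \charf{[a,b]}(\xi_p)\,\partial_p \varphi(\xi_p,\vect{x}_{\neq p})\,\dif \xi_p$, after which both arguments conclude via Proposition \ref{propA4}. Your extra remark that the endpoint hyperplanes are $\mu$-negligible (by absolute continuity of the marginals) just makes explicit a point the paper leaves implicit.
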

\begin{proof}
If $p \notin \Lambda$, the result is a direct consequence of Proposition \ref{propA4}. For any function $\varphi$ in $\mathcal{C}_{\Lambda}^1(X)$, $p \notin \Lambda$ implies that $\charf{[a,b]}(x_p) \partial_{\Lambda} \varphi = \partial_{\Lambda} \charf{[a,b]}(x_p)  \varphi $, and since $\charf{[a,b]}(x_p) \varphi$ is continuously differentiable with respect to the variables in $\Lambda$, and piecewise continuous with respect to the other variables, we have:
\begin{equation*}
\mu( \charf{[a,b]}(x_p) \partial_{\Lambda} \varphi) = \mu( \partial_{\Lambda} \charf{[a,b]}(x_p) \varphi)  \leq \var{\Lambda}(\mu) \ \norm{\varphi}_{\infty}
\end{equation*}

If $p \in \Lambda$, for any function $\varphi \in \mathcal{C}_{\Lambda}^1(X)$, we can define:
\begin{equation*}
\psi(\vect{x}) = \varphi(a,\vect{x}_{\neq p}) + \int_{a}^{x_p} \charf{[a,b]}(\xi_p)\  \partial_p \varphi (\xi_p,\vect{x}_{\neq p})\  \mathrm d \xi_p
\end{equation*}
We immediately see that $\psi$ is continuous, piecewise continuously differentiable, and $\partial_p \psi =  \charf{[a,b]}(\xi_p) \partial_p \varphi$. Therefore:
\begin{equation*}
\mu(\charf{[a,b]}(\xi_p) \partial_{\Lambda} \varphi) = \mu(\partial_p \psi ) \leq \var{\Lambda}(\mu) \norm{\psi}_{\infty}
\end{equation*}
But the definition of $\psi$ implies that $\norm{\psi}_{\infty} \leq  \norm{\varphi}_{\infty}$, and we therefore have:
\begin{equation*}
{\mu(\charf{[a,b]}(\xi_p) \partial_{\Lambda} \varphi) \leq \var{\Lambda}(\mu) \norm{\varphi}_{\infty}} 
\end{equation*}
\end{proof}

\bibliography{biblio}{}
\bibliographystyle{unsrt}
\end{document}